\let\uml\"
\title[Geodesic surfaces in knot complements]{Geodesic surfaces in the complement of knots with small crossing number}
\author{Khanh Le}  
\address[Khanh Le]{Department of Mathematics, Rice University, 6100 Main Street, Houston, TX, 77005} 
\email{khanh.le@rice.edu}
\author{Rebekah Palmer}  
\address[Rebekah Palmer]{Department of Mathematics, Temple University, 1805 N Broad St, Philadelphia, PA 19122} 
\email{rebekah.palmer@temple.edu}
\thanks{The authors were supported by NSF Grant DMS 1906088. The authors would like to thank Nathan Dunfield for his suggestions concerning the computation in the case of the knot $9_{25}$. The authors would like to thank Matthew Stover, Colin Adams, and Nathan Dunfield for their comments on an early draft of the paper. Finally, the authors would like to thank the anonymous referee for pointing out a gap in the proof of Proposition \ref{prop:RootsOfPsi} in an earlier draft and for their comments in improving this paper.}
\keywords{hyperbolic knots, totally geodesic surfaces}
\subjclass[2020]{57K32}
\DeclareMathOperator{\eval}{ev} 
\DeclareMathOperator{\PSL}{PSL} 
\DeclareMathOperator{\PSU}{PSU} 
\DeclareMathOperator{\SL}{SL} 
\DeclareMathOperator{\Stab}{Stab} 
\DeclareMathOperator{\tr}{tr} 
\DeclareMathOperator\rpart{\mathfrak{Re}} 
\newcommand{\bbZ}{\mathbb{Z}} 
\newcommand{\bbQ}{\mathbb{Q}} 
\newcommand{\bbR}{\mathbb{R}} 
\newcommand{\bbC}{\mathbb{C}} 
\newcommand{\bbH}{\mathbb{H}} 
\newcommand{\bbP}{\mathbb{P}} 
\newtheorem{thm}{Theorem}[section]
\newtheorem{lem}[thm]{Lemma}
\newtheorem{cor}[thm]{Corollary}
\newtheorem{prop}[thm]{Proposition}
\newtheorem{quest}[thm]{Question}
\newtheorem{example}[thm]{Example}
\newtheorem{remark}[thm]{Remark}
\begin{document}

\begin{abstract}
In this article, we investigate the problem of counting totally geodesic surfaces in the complement of hyperbolic knots with at most 9 crossings. Adapting previous counting techniques of boundary slope and intersection, we establish uniqueness of a totally geodesic surface for the knots $7_4$ and $9_{35}$. Extending an obstruction to the existence of totally geodesic surfaces due to Calegari, we show that there is no totally geodesic surface in the complement of 47 knots.  
\end{abstract} 
\maketitle
\setcounter{tocdepth}{1}
\tableofcontents


\section{Introduction}

The study of surfaces has been essential in studying the geometry and topology of the 3-manifolds that contain them. In this paper, we will mainly be concerned with complete properly immersed totally geodesic surfaces in hyperbolic 3-manifolds. These surfaces are natural geometric objects which enjoy many applications to the study of the geometry, topology, and algebra of hyperbolic 3-manifolds. For example, Millson constructed families of arithmetic hyperbolic $n$-manifolds for each $n\geq 3$ with arbitrarily large first Betti number by showing the existence of a non-separating totally geodesic hypersurface in these examples \cite{Mill76}. In another application, Adams used geodesic surfaces and cut-and-paste techniques to produce examples of non-homeomorphic hyperbolic 3-manifolds with the same volume \cite[Corollary 4.4]{A85}. Most recently, Bader, Fisher, Miller, and Stover gave a geometric characterization of arithmeticity using geodesic submanifolds: they showed that if a complete finite-volume hyperbolic $n$-manifold of dimension at least 3 contains infinitely many maximal totally geodesic submanifolds then it must be arithmetic \cite[Theorem 1.1]{BFMS21}.  A similar result was also obtained for the case of closed hyperbolic 3-manifolds by Margulis and Mohammadi \cite[Theorem 1.1]{MM22}. 

The converse of the geometric characterization of arithmeticity implies that if a hyperbolic 3-manifold is non-arithmetic, then it contains finitely many (possibly zero) totally geodesic surfaces. We are interested in counting totally geodesic surfaces in non-arithmetic hyperbolic 3-manifolds. Prior to the result in \cite{BFMS21}, there are some known obstructions to the existence of totally geodesic surfaces given by Calegari \cite[Corollary 4.6]{C06} and by Maclachlan and Reid \cite[Theorem 5.3.1 and Corollary 5.3.2]{MR03}. The first examples of non-arithmetic hyperbolic $3$-manifolds in which the set of totally geodesic surfaces is nonempty and finite were given in \cite[Theorem 1.3]{FLMS21} which are link complements in $S^3$ (see \cite[Section 6]{FLMS21} for a concrete description of these examples). However, the method in \cite{FLMS21} did not give the exact count of totally geodesic surfaces. Using homogeneous dynamics, Lindenstrauss and Mohammadi gave an upper bound, with some unknown universal constants, to the number of totally geodesic surfaces in a hyperbolic 3-manifold coming from the Gromov--Piatetski-Shapiro hybrid construction \cite[Theorem 1.4]{LM21}

In \cite{LePalmer}, the authors gave the first explicit examples of infinitely many non-commensurable hyperbolic 3-manifolds each of which contains exactly $k$ totally geodesic surfaces for every positive integer $k$ \cite[Theorem 1.2]{LePalmer}. In particular, the authors showed that the complement of an infinite family of twist knots contains a unique totally geodesic surface and considered finite covers of these twist knot complements to produce hyperbolic 3-manifolds with the desired number of totally geodesic surfaces \cite[Theorem 1.3]{LePalmer}. To establish uniqueness of the totally geodesic surface in a family of twist knot complements, the authors introduced counting techniques which take advantage of the geometry and number theoretic properties of these knot complements. In this work, we study totally geodesic surfaces in the complement of hyperbolic knots with at most nine crossings. The goal of this paper is to extend the current counting techniques of totally geodesic surfaces and known obstructions to the existence of these surfaces in order to gain a quantitative understanding of totally geodesic surfaces using a finite collection of hyperbolic knots as a testing ground. This quantitative understanding sheds light on the limitations of the current techniques and points us to some interesting questions which are discussed in Section \ref{sec:Questions}.

We first summarize what was known about totally geodesic surfaces in knots with at most nine crossings. Among the knots with at most nine crossings, there are 79 hyperbolic knots \cite{knotinfo}. Let 
\[
\mathcal{K} = \{ \text{Hyperbolic knots in $S^3$ with at most nine crossings}\}.
\]
The set $\mathcal{K}$ includes all prime knots of nine or fewer crossings other than the unknot, $3_1$, $5_1$, $7_1$, $8_{19}$ and $9_1$. The knot $4_1$, also known as the figure-8 knot, produces the only arithmetic knot complement in $S^3$ \cite[Theorem 2]{Reid91b}. The complement $S^3 \setminus 4_1$ contains an immersed thrice-punctured sphere which must be totally geodesic \cite[Theorem 3.1]{A85}. It follows that the complement of the figure-8 knot contains infinitely many totally geodesic surfaces. The complements of the remaining hyperbolic knots contain finitely many totally geodesic surfaces because they are all non-arithmetic. It was observed that the complement of each of the following knots 
\[
\{
8_{10},8_{15},9_{22},9_{32},9_{35},9_{42},9_{48}
\}
\]
does not contain any closed totally geodesic surfaces \cite[Section 4.3]{Reid91b}. Menasco and Reid observed the complement of an alternating knot, a closed 3-braid or a tunnel number one knot does not contain any closed embedded totally geodesic surface \cite[Theorem 1 and Corollary 4]{MeR92}. Adams and Schoenfeld proved that the complements of two-bridge knots do not contain any embedded orientable totally geodesic surface \cite[Theorem 4.1]{AS05}. Using the data in \cite{knotinfo}, we see the complement of the knots in 
\[
\mathcal{K} \setminus \{9_{46},9_{47},9_{48},9_{49}\}
\]
does not contain any closed embedded totally geodesic surface.

The complements of the knots $5_2$, $6_1$, $7_2$, $8_1$, and $9_2$ each contain an immersed totally geodesic thrice-punctured sphere. This surface is known to be unique in the case of the knot $5_2$, $7_2$, and $9_2$ \cite[Corollary 1.4]{LePalmer}.  In this article, we show that the complement of the knot $7_4$ contains an immersed totally geodesic twice-punctured torus. To the best of our knowledge, this surface has not been found previously.

\begin{thm} \label{t:7-4!surf}
The complement of the knot $7_4$ contains a unique totally geodesic surface.  Moreover, this surface is a twice-punctured torus.
\end{thm}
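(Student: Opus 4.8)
The plan is to separate the statement into an existence part and a uniqueness part. For existence, I would use an explicit computation (e.g., with \texttt{SnapPy}) to locate a three-generator subgroup $\Gamma\leq\pi_1(S^3\setminus 7_4)$ that is the natural candidate for the fundamental group of a totally geodesic twice-punctured torus, and then verify rigorously, working with the exact matrix entries over the invariant trace field and its real place, that $\Gamma$ is conjugate into $\mathrm{PSL}_2(\mathbb{R})$, that it is discrete of finite coarea with quotient an orientable surface of Euler characteristic $-2$ and two cusps, and that the corresponding $\pi_1(S^3\setminus 7_4)$-orbit of geodesic planes in $\mathbb{H}^3$ covers an immersed (necessarily not embedded, consistent with the two-bridge obstruction of Adams--Schoenfeld) surface in the knot complement. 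This produces the surface asserted to exist, which, as far as we know, has not appeared before.

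For uniqueness, the first task is to constrain an arbitrary totally geodesic surface $S\subset S^3\setminus 7_4$. Since $S$ is totally geodesic it is essential (incompressible and $\partial$-incompressible), hence either closed or properly immersed with all boundary components running along a single slope on the cusp torus of $7_4$. The closed possibility is eliminated by a trace-field argument: the invariant trace field of a closed totally geodesic surface would embed as a real subfield of the invariant trace field of $S^3\setminus 7_4$, and one checks that no such subfield can support a cocompact Fuchsian subgroup of $\pi_1(S^3\setminus 7_4)$ (together with the fact that $7_4$, being alternating and two-bridge, admits no closed \emph{embedded} totally geodesic surface, consistent with the knots in $\mathcal{K}\setminus\{9_{46},9_{47},9_{48},9_{49}\}$). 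Thus $S$ has non-empty boundary and a well-defined boundary slope, which must lie in the finite list of boundary slopes of the two-bridge knot $7_4$. A reality restriction in the spirit of Calegari's obstruction then cuts this list to a single slope $s$: the line in the cusp plane carrying $\partial S$ must be invariant under an anti-holomorphic involution compatible with a real place of the invariant trace field, and matching this condition against the cusp shape of $7_4$ leaves only one admissible slope.

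The remaining, and I expect hardest, step is to show that there is exactly one totally geodesic surface carrying boundary slope $s$. Here I would first apply the Hatcher--Thurston classification of essential surfaces in two-bridge knot complements to enumerate the finitely many essential surfaces with slope $s$, and use Gauss--Bonnet ($\operatorname{area}(S)=2\pi|\chi(S)|$) together with the number of boundary components to isolate the twice-punctured torus as the only possible topological type. To pin down that there is a single such \emph{geodesic} surface — not merely finitely many — I would then run an intersection-counting argument in the style of \cite{LePalmer}: lifting $S$ to its family of geodesic planes in $\mathbb{H}^3$, the geometric intersection number of $S$ with a fixed short closed geodesic of $S^3\setminus 7_4$ (or with the surface constructed above) is bounded above by $\operatorname{area}(S)$ and below by a combinatorial count determined by the isotopy class of the underlying essential surface, and for the twice-punctured torus these bounds leave no room, forcing $S$ to coincide with the surface produced above. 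The principal difficulty is that, unlike the thrice-punctured sphere groups treated in \cite{LePalmer}, the fundamental group of a twice-punctured torus carries genuine Teichm\"uller moduli, so the surface is not determined by its peripheral data alone; sharpening the intersection estimates to overcome this, and carefully accounting for the geodesic planes that meet a fundamental domain for the cusp, is where the real work lies.
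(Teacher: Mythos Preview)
Your existence outline matches the paper's (an explicit three-generator subgroup $\Delta\le\pi_1(M)$ is exhibited and shown to be conjugate into $\PSL_2(\bbZ)$ with quotient a twice-punctured torus), but the uniqueness strategy has two genuine gaps.

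First, the claim that a cusped totally geodesic surface has ``all boundary components running along a single slope'' is false in this very example: the twice-punctured torus has two cusps, with boundary slopes $+2$ and $-2$. The paper's boundary-slope restriction accordingly yields the \emph{set} $\{\pm2\}$, not a single slope, and it is not obtained from a reality condition on the cusp shape or from the Hatcher--Thurston list. Rather, one uses the trace condition $\tr(a^p\ell^q\cdot\gamma a^m\ell^n\gamma^{-1})\in\bbZ$ with $\gamma$ drawn from the already-constructed Fuchsian group $\Delta$ (and from $w\Delta w^{-1}$), so that the lower-left entry of $\gamma$ has a known algebraic form; expanding in the $\bbQ$-basis $\{1,z,z^2\}$ of the cubic trace field forces $p/q,m/n\in\{\pm2\}$. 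Having the surface $S$ in hand is essential to this step.

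Second, the Hatcher--Thurston classification enumerates only \emph{embedded} essential surfaces, and you have already noted that the surface here is immersed (Adams--Schoenfeld forbids an embedded orientable totally geodesic surface in a two-bridge complement). There is no finite list of immersed essential surfaces carrying a given boundary slope, so your enumeration step has nothing to feed into the Gauss--Bonnet/intersection argument. The paper's actual endgame avoids classifying candidate surfaces altogether: take a vertical lift $\widetilde{\Sigma}$ of a putative second surface with slope $2$ at $\infty$, translated to lie strictly between $H$ and $x(H)$. Two specific hemispherical lifts $C_1=y(H)$ and $C_2=xy^{-1}(H)$ of $S$ are shown to overlap, so $\widetilde{\Sigma}$ meets at least one of them along a geodesic $\tilde\theta$. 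By the intersection lemma $\tilde\theta$ covers a closed or cusp-to-cusp geodesic of $S$, hence its endpoints are $\gamma(\sigma_i(z-1)(z-2))$ for $\gamma\in\{y,xy^{-1}\}$ and real $\sigma_i$ with $\sigma_1\sigma_2,\sigma_1+\sigma_2\in\bbQ$. The slope-$2$ requirement $\theta_1-\theta_2\in\bbR(\tau+2)$ then reduces, via $\bbQ(z)\cap\bbR=\bbQ$, to a $2\times2$ linear system in $\sigma_1+\sigma_2$ and $\sigma_1\sigma_2$ whose only solution is $\sigma_1=\sigma_2=0$, a contradiction. No area or Teichm\"uller-moduli estimates enter; the argument is purely algebraic in the trace field.
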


The complement of the knot $9_{35}$ contains a totally geodesic Seifert surface which was found by Adams and Schoenfeld. More generally, Adams and Schoenfeld observed that the complements of the balanced pretzel knots each contain a totally geodesic Seifert surface which is unique among Seifert surfaces (see \cite[Example 3.1]{AS05} and \cite[Corollary 3.4]{Ad.et.al08}). See Figure \ref{fig:9-35_seifert} for a diagram of the 3-tangle balanced pretzel knot $P(n,n,n)$, where $n$ is the number of half twists in each tangle, with its totally geodesic Seifert surface. The knot $9_{35}$ is the knot with the smallest crossing number in this family, also denoted as $P(3,3,3)$.  We herein study the infinite family of 3-tangle balanced pretzel knots $P(n,n,n)$ (see Figure \ref{fig:pretzel_seifert} for the knot diagram). For the complement of the knots in this family, we show that:

\begin{thm} \label{t:pret!surf}
Suppose $n$ is an odd prime. Any totally geodesic surface $\Sigma$ in the complement of $P(n,n,n)$ must:
\begin{itemize}
    \item be the totally geodesic Seifert surface $S$, or
    \item intersect $S$ transversely along a union of closed geodesics.
\end{itemize}
Furthermore, $S$ is the unique totally geodesic surface in the complement of $P(3,3,3)$.
\end{thm}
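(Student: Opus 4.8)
The plan is to adapt the boundary-slope and intersection arguments of \cite{LePalmer} to the family $M_n := S^3\setminus P(n,n,n)$, writing $\Gamma_n=\pi_1(M_n)\subset\operatorname{PSL}_2(\mathbb{C})$. Let $S\subset M_n$ be the (once-punctured torus) Seifert surface, totally geodesic by \cite{AS05,Ad.et.al08}; by the same references $S$ is already the unique totally geodesic Seifert surface, so the real work is to control totally geodesic surfaces $\Sigma$ of every other topological type. Any such $\Sigma$ is $\pi_1$-injective and lifts to a geodesic plane $P_\Sigma\subset\mathbb{H}^3$ stabilized by a subgroup $F_\Sigma\cong\pi_1(\Sigma)$ of $\Gamma_n$; we may assume $F_\Sigma$ is maximal among Fuchsian subgroups of $\Gamma_n$, since a totally geodesic surface whose image is non-maximal is a cover of one whose image is maximal.

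The first step is to constrain the boundary slopes of a noncompact $\Sigma$. Being essential, $\Sigma$ has boundary slope in the finite, computable set of boundary slopes of essential surfaces in $M_n$; I would then cut this down using total geodesicity. If $\Sigma$ meets the cusp along a slope $p/q$, then conjugating $\Gamma_n$ so that $P_\Sigma$ lies over $\mathbb{R}\cup\{\infty\}$ puts $F_\Sigma$ inside $\operatorname{PGL}_2(\mathbb{R})$, so its invariant trace field is a totally real subfield of $\mathbb{Q}(\operatorname{tr}\Gamma_n)$, and the cusp shape $\tau_n$ of $M_n$ must be compatible with $p/q$ and this subfield. Since $n$ is an odd prime, the lattice of totally real subfields of $\mathbb{Q}(\operatorname{tr}\Gamma_n)$ is small --- which I would verify from the pretzel presentation via its $\operatorname{PSL}_2(\mathbb{C})$-character variety --- and, combined with the finite slope list, this forces the only possibility to be the longitude $0$. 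Thus a noncompact totally geodesic $\Sigma$ has boundary a union of longitudes, parallel on the cusp torus to $\partial S$.

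The dichotomy then follows quickly. Assume $\Sigma\ne S$. If $P_\Sigma$ lay in the $\Gamma_n$-orbit of $P_S$ then $F_\Sigma$ would be conjugate to $\pi_1(S)$, forcing $\Sigma=S$ by maximality; so no $\Gamma_n$-translate of $P_S$ equals one of $P_\Sigma$. In a small enough horoball neighborhood of the cusp, $S$ and $\Sigma$ are then disjoint unions of vertical half-planes over parallel lines (or $\Sigma$ is closed), so $\Sigma\cap S$ does not accumulate at the cusp and is a compact $1$-manifold; each of its circles is totally geodesic, hence a closed geodesic of $M_n$, and the intersection is transverse, since two distinct geodesic planes that meet in $\mathbb{H}^3$ do so along a single geodesic and never tangentially. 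That $\Sigma\cap S$ is in fact nonempty I would obtain either from a three-dimensional argument ruling out a totally geodesic surface disjoint from $S$ (using the structure of $M_n$ cut along $S$), or --- more in keeping with the rest of the proof --- directly from the enumeration below. This yields the first part of \cref{t:pret!surf}.

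For the uniqueness statement I would specialize to $n=3$, so $M_3=S^3\setminus 9_{35}$. Since $S^3\setminus 9_{35}$ contains no closed totally geodesic surface \cite{Reid91b}, a hypothetical $\Sigma\ne S$ is noncompact with longitudinal boundary and, by the first part, meets $S$ in at least one closed geodesic $\gamma$; as $\gamma$ lies on a totally geodesic surface it has real complex length. The plan is then: (i) from the explicit trace field, invariant trace field and quaternion algebra of $\Gamma_3$, determine the very few totally real subfields $F\subset\mathbb{Q}(\operatorname{tr}\Gamma_3)$ that can occur as the invariant trace field of a Fuchsian subgroup, one of which is that of $\pi_1(S)$; (ii) bound the topological complexity of $\Sigma$, adapting the Euler-characteristic estimates of \cite{LePalmer}, so that only finitely many possibilities remain; and (iii) eliminate those by trace computations showing that $\Gamma_3$ carries no Fuchsian subgroup of the required type other than $\pi_1(S)$. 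The hard part is making steps (i)--(iii) effective: \cite{BFMS21} guarantees that $M_3$ has only finitely many maximal totally geodesic surfaces, but producing a checkable list and eliminating every candidate is what the argument must supply by hand, and it is here that the concrete arithmetic of $\Gamma_3$ --- and, for general $n$, the primality of $n$ --- comes into play.
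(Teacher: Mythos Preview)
Your outline captures the first two moves of the paper correctly in spirit: use the arithmetic of the trace field to pin the boundary slope of any cusped totally geodesic $\Sigma$ to $0$, and then observe that parallel slopes at the cusp preclude cusp-to-cusp geodesics in $\Sigma\cap S$. Two points, however, are real gaps rather than details to be filled in.

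\textbf{Nonemptiness of $\Sigma\cap S$.} You explicitly defer this, but it is the heart of the first half. The paper does not obtain it from ``the structure of $M_n$ cut along $S$'' or from any later enumeration; instead it builds an explicit chain of hemispherical lifts $C_j=g_j(H_\tau)$, $D_j=h_j(H_\tau)$ of $S$ whose boundary circles are pairwise tangent and span the strip between $H_\tau$ and $s_1(H_\tau)$ (this is where the explicit words $g_j$, $h_j$ and the recursion for $\Lambda_k$ are used). One then shows that a vertical plane parallel to $H_\tau$ cannot be tangent to any $C_j$ (an algebraic computation in $\mathbb{Q}(z)$ showing the centre-to-tangent-point vector is never a real multiple of $\tau$), so it must meet some $C_j$ in two points, giving a closed-geodesic intersection downstairs. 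Without something like this configuration argument you do not know $\Sigma\cap S\neq\varnothing$.

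\textbf{Uniqueness for $P(3,3,3)$.} Your three-step plan---classify totally real subfields, bound $\chi(\Sigma)$, then eliminate a finite list of Fuchsian subgroups by trace checks---does not match the paper and, more to the point, I do not see how step~(ii) could be carried out: nothing in \cite{LePalmer} bounds the Euler characteristic of an arbitrary totally geodesic $\Sigma$ in this setting, and without such a bound step~(iii) has infinitely many candidates. The paper's route is entirely different and avoids any enumeration. Given the intersection geodesic on $C_j$, its endpoints $\theta_1,\theta_2$ are images of real quadratic irrationals $\sigma_1,\sigma_2$ under $g_j\gamma'$, where $\gamma'$ conjugates $\Delta$ into $\operatorname{PSL}_2(\mathbb{Z})$. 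The slope-$0$ condition forces $z(\theta_1-\theta_2)\in\mathbb{R}$; writing this out for each relevant $g_j$ and reducing modulo the cubic $\Lambda_1(z)=z^3-z^2+3z-1$ yields $\sigma_1+\sigma_2=\sigma_1\sigma_2=0$, which has no solution with $\sigma_1\neq\sigma_2$. This contradicts the existence of the closed intersection geodesic and hence of $\Sigma$. The argument is a direct computation in the visual boundary, not a finiteness-plus-elimination scheme.

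A smaller point: for general prime $n$ you should not appeal to Hatcher's boundary-slope finiteness and then ``cut down''; the paper proves directly, via the trace condition \eqref{eq:TraceCondition} and the explicit form of elements of $\Delta$, that $0$ is the \emph{only} slope compatible with integral Fuchsian traces. That computation also uses that $[\mathbb{Q}(z):\mathbb{Q}]=2k+1$ is prime (so there is no proper real subfield), which is exactly where the primality hypothesis enters.
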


\subsection{Proof outline of 
Theorem \ref{t:7-4!surf} and Theorem \ref{t:pret!surf}} The proofs of Theorem \ref{t:7-4!surf} and Theorem \ref{t:pret!surf} follow the same outline as that of \cite[Theorem 1.3]{LePalmer}. In particular, we take advantage of the facts that:
\begin{itemize}
    \item the traces of the knot group, identified as a discrete subgroup of $\PSL_2(\bbC)$, are algebraic integers;
    \item the trace fields of the knot group have odd degree and contain no real subfield besides $\bbQ$.
\end{itemize}
These conditions imply that the complements of these knots do not contain any closed totally geodesic surfaces by a proposition of Reid (\cite[Proposition 2]{Reid91}; restated in this article as Proposition \ref{prop:NoClosedTG}). We observe that the number theoretic constraints above also imply that the traces of any Fuchsian subgroup of these knot groups must be integers, which we will refer to as the trace condition (see Section \ref{subsec:BoundarySlopeTraceCondition} and \eqref{eq:TraceCondition}). Taking advantage of the trace condition and the geometry of the cusp neighborhood, we show that any totally geodesic surface in these knot complements must either be the known totally geodesic surface or intersect the torus neighborhood of the cusp in parallel with the known totally geodesic surface.  The definition of parallel here is in reference to boundary slope; see Lemma \ref{lem:7-4_bdry} and Lemma \ref{lem:pret0} for a more precise description.  We also observe a geometric constraint that totally geodesic surfaces must intersect each other along a union of closed and cusp-to-cusp geodesics (see Lemma \ref{lem:IntersectionsOfTGS}). By playing the trace condition, boundary slope, and geometric constraints off each other, we prove that the only totally geodesic surface satisfying all three constraints is the known totally geodesic surface, and therefore we establish uniqueness of totally geodesic surface in these examples. The new idea compared to the techniques in \cite{LePalmer} is the use of closed geodesics in the final step to establish uniqueness. Furthermore, we also demonstrate that the techniques in \cite{LePalmer} can also be adapted to the situation where the set of permissible boundary slopes of totally geodesic surfaces is a singleton, as in the case of the pretzel knot $9_{35}$.     

In the process of proving Theorem \ref{t:pret!surf}, we obtain new information about the trace field of the family of 3-tangle balanced pretzel knots $P(2k+1,2k+1,2k+1)$ for $k\geq 1$, which is of independent interest. In particular, we prove the following.

\begin{cor}
\label{cor:PretzelTraceField}
The trace field of $P(2k+1,2k+1,2k+1)$ is $\bbQ(z_k)$ where the minimal polynomial of $z_k$ is 
of degree $2k+1$. 
\end{cor}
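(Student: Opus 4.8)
The plan is to compute the trace field directly from an explicit holonomy representation, exploiting the order-three symmetry $\sigma$ of $P(n,n,n)$ (with $n=2k+1$) that cyclically permutes the three tangles, together with the fact that pushing $n$ half-twists through a $2$-tangle is realized by an $n$-th power of a fixed $2\times 2$ matrix and is therefore governed by Chebyshev polynomials. First I would fix a Wirtinger-type presentation of $\Gamma_n = \pi_1\big(S^3 \setminus P(n,n,n)\big)$ adapted to $\sigma$, with meridional generators $x_1,x_2,x_3$ (one per tangle) permuted cyclically by $\sigma$. Up to conjugacy the discrete faithful representation $\rho_n$ underlying \cref{t:pret!surf} may be normalized so that $\rho_n(x_1)$ is a fixed parabolic and $\rho_n(\sigma)$ a fixed elliptic of order three; then $\rho_n(x_2)$ and $\rho_n(x_3)$ are forced, and the entire representation depends on a single parameter, which I call $u$ (a normalized trace such as $\operatorname{tr}\rho_n(x_1 x_2) - 2$).

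Next I would reduce the remaining tangle relation to one polynomial equation. Writing $A = \rho_n(x_1 x_2)$, the $n$ half-twists contribute $A^n = S_{n-1}(\operatorname{tr} A)\,A - S_{n-2}(\operatorname{tr} A)\,I$, where the $S_m$ are the polynomials with $S_{-1}=0$, $S_0=1$, $S_{m+1}(x) = x S_m(x) - S_{m-1}(x)$. Substituting and clearing denominators produces a polynomial $P_n \in \bbZ[u]$ whose degree grows linearly with $n$ via $S_{n-1}$. One then checks that every matrix entry of every $\rho_n(x_i)$, hence every trace of $\rho_n(\Gamma_n)$, lies in $\bbQ(u)$, while $u$ is in turn an integer-coefficient combination of traces of $\rho_n$; thus the trace field of $P(n,n,n)$ equals $\bbQ(z_k)$, where $z_k$ is the root of $P_n$ corresponding to $\rho_n$, identified numerically and by its distinguished geometric behavior.

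It remains to show $[\bbQ(z_k):\bbQ] = 2k+1$. I would factor $P_n$ over $\bbQ$, discard the factors cutting out reducible or non-geometric representations (the abelian locus and the $\operatorname{PSL}_2(\bbR)$- and $\operatorname{SU}(2)$-points), and isolate the factor $p_k$ having $z_k$ as a root; a degree count on the Chebyshev expansion gives an upper bound of the form $\deg p_k \le 2k+1$. Equality requires proving that $p_k$ is irreducible over $\bbQ$ of degree exactly $2k+1$, uniformly in $k$, and this is the main obstacle. The natural attacks are: a reduction of $P_n$ modulo a well-chosen prime combined with the factorization above; an Eisenstein- or Newton-polygon-type argument on the Chebyshev coefficients; or matching $p_k$ with the minimal polynomial of an independently understood algebraic number coming from the $\bbZ/3$-quotient orbifold $\big(S^3 \setminus P(n,n,n)\big)/\langle\sigma\rangle$. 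The oddness of $\deg p_k$, forced by the number-theoretic constraints on these knot groups recorded in the proof outline (odd-degree trace field containing no real subfield besides $\bbQ$), together with the upper bound $2k+1$, both constrains and sanity-checks the answer; the real work is controlling cancellation among the Chebyshev coefficients so that no proper divisor of $2k+1$ can be the true degree.
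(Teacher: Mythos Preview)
Your setup---normalizing the holonomy via the order-three symmetry and encoding the twist relations through Chebyshev-type recurrences---matches the paper's approach closely, and the identification of the trace field with $\bbQ(z_k)$ goes through just as you sketch (the paper does it by computing $\tr\rho(s_1s_2)=2-z_k^2$ and $\tr\rho(s_1s_2s_3)=2-3z_k^2-z_k^3$, so $\bbQ(\tr\Gamma)=\bbQ(z_k^2,z_k^3)=\bbQ(z_k)$). The discarding of the spurious factor is also parallel: the paper shows the relation polynomial factors as $(-\beta_k z+\alpha_k)\Lambda_k(z)$ and eliminates the linear-in-$\alpha_k,\beta_k$ factor because a root there would make $\tr\rho((s_1s_3^{-1})^k s_1)=0$, giving torsion in a torsion-free group.

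The genuine gap is the irreducibility of $\Lambda_k$, which you correctly flag as ``the main obstacle'' but do not resolve. None of your proposed attacks (reduction mod $p$, Newton polygon, matching against the $\bbZ/3$-quotient orbifold) is what the paper does, and each would require substantial additional work to make uniform in $k$. The paper's argument, modeled on Hoste--Shanahan's treatment of twist knots, is to substitute $z=x-x^{-1}$, obtaining $\Psi_k(x)=x^{2k+1}\Lambda_k(x-x^{-1})$, then multiply by $x^2+1$ to get the sparse polynomial $\Phi_k(x)=x^{4k+4}-x^{4k+3}+x^{4k+2}-x^2-x-1$. An argument-principle computation locates one root of $\Psi_k$ in the interior of each sector of the first quadrant, each with $|x_i|>1$; since roots pair as $\{x,-x^{-1}\}$, any proper ``complete'' factor (one closed under $x\mapsto -x^{-1}$) would have root sum a positive integer strictly between $0$ and the total root sum, which is exactly $1$---contradiction. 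Hence $\Psi_k$ either is irreducible or splits into two incomplete halves, and since any factoring of $\Lambda_k$ would induce a complete factoring of $\Psi_k$, $\Lambda_k$ is irreducible.

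One further caution: your appeal to ``odd-degree trace field containing no real subfield besides $\bbQ$'' as a constraint is circular---those properties are consequences of the corollary you are trying to prove, not inputs to it.
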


To our best knowledge, the only other infinite family of knots whose trace field is precisely known is the family of twist knots \cite[Theorem 1]{HS01}. To put Corollary \ref{cor:PretzelTraceField} in a broader context, we observe that both families of knots can be obtained by doing Dehn surgery on a hyperbolic link complement. The twist knots are obtained by doing Dehn surgery on the Whitehead link. Meanwhile, the balanced pretzel knots are obtained by doing Dehn surgery on a fully augmented pretzel link (see \cite{MMT20} for the definition). In other words, Corollary \ref{cor:PretzelTraceField} gives another example of an infinite family of Dehn surgeries on a link whose trace field degree has precise linear growth. Corollary \ref{cor:PretzelTraceField} gives an illustrative example of the conditional theorem in \cite[Theorem 1.2]{GB21}. 

\subsection{Obstruction results}
Now we turn our attention to the remaining $73$ hyperbolic knots:
\[
\mathcal{K} \setminus \{4_1,5_2,7_2,7_4,9_2,9_{35}\}
\]
Using SnapPy \cite{SnapPy} and \cite{knotinfo}, we found $23$ fibered knots that satisfy the obstruction to the existence of totally geodesic surface due to Calegari \cite[Corollary 4.6]{C06}. We modify this obstruction for the non-fibered case to obtain the following.

\begin{thm}
\label{thm:ModifiedCalegari}
Let $M$ be a hyperbolic knot complement such that the trace field $K$ of $M$ has no proper real subfield beside $\bbQ$ and contains no quadratic field. Let $F$ be a Seifert surface in $M$ with minimal genus $g$ among Seifert surfaces. Suppose that there exists $\rho:\pi_1(M) \to \PSL_2(\bbR)$ a Galois conjugate of the geometric representation of $\pi_1(M)$ such that 
\[
e_\rho([F]) < 2g - 1
\]
where $e_\rho\in H^2(M,\partial M;\bbZ)$ is the relative Euler class of this representation.
 Then $M$ contains no totally geodesic surfaces.
\end{thm}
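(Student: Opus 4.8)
The plan is to combine Calegari's original obstruction for fibered knots with the observation that the trace-field hypotheses force any hypothetical totally geodesic surface to be closed or to have integer traces, and then to derive a contradiction from the Euler class inequality. First I would recall the structure of Calegari's argument in \cite{C06}: if $M$ contained a totally geodesic surface $\Sigma$, then (after passing to the cover corresponding to $\pi_1(\Sigma)$ and using that $\Sigma$ is $\pi_1$-injective) one obtains a Fuchsian subgroup $\Gamma = \pi_1(\Sigma) \le \pi_1(M)$, and the hypotheses on the trace field $K$ — no proper real subfield besides $\bbQ$, no quadratic subfield — imply via \cref{prop:NoClosedTG} (Reid) and the trace condition that $\Sigma$ is necessarily non-compact, i.e. has cusps, and moreover that the invariant trace field of $\Gamma$ is $\bbQ$. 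So $\Gamma$ is commensurable with $\PSL_2(\bbZ)$; in particular $\Sigma$ is (virtually) a modular-type surface and its ends are all cusps of $M$.

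Next I would set up the topological/cohomological comparison. The key point in Calegari's obstruction is that a totally geodesic surface $\Sigma$, being a surface group representation into $\PSL_2(\bbR)$ that is a restriction of the geometric representation, has a Gauss--Bonnet/Milnor--Wood constraint: its Euler characteristic is controlled by the Euler class, and because $\Sigma$ is essential it contributes a class in $H_2(M,\partial M;\bbZ) \cong \bbZ$, which must be a multiple of the fundamental class of a Seifert surface $F$ (since $H_2(M,\partial M;\bbZ)$ is generated by $[F]$ for a knot complement). Evaluating the relative Euler class $e_\rho$ of the given Galois-conjugate representation $\rho$ on this class and using the Milnor--Wood inequality $|e_\rho([F])| \le -\chi(\Sigma_0)$ where $\Sigma_0$ is the surface (with an appropriate normalization for the punctured/relative setting), one gets a lower bound on the complexity of $\Sigma$. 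On the other hand, minimal genus of a Seifert surface being $g$ gives, via the Thurston norm, $-\chi(F) = 2g-1$, and the class $[\Sigma]$ being a nonzero multiple $n[F]$ forces $-\chi(\Sigma) \ge |n|(2g-1) \ge 2g-1$. Combining with $e_\rho([\Sigma]) = n\, e_\rho([F])$ and the hypothesis $e_\rho([F]) < 2g-1$ should produce a contradiction: the Milnor--Wood bound caps $e_\rho$ below what the geometry of a genuine totally geodesic surface would require. I would need to be careful to track the relative (cusped) version of Milnor--Wood — the relevant statement is that for a representation sending peripheral elements to parabolics, the relative Euler number is bounded by $-\chi$ of the surface, with equality characterizing the Fuchsian/geometric case.

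I expect the main obstacle to be the bookkeeping around the \emph{relative} Euler class and the passage from the immersed surface $\Sigma$ to a well-defined class in $H_2(M,\partial M;\bbZ)$, since $\Sigma$ is only immersed, not embedded, and only properly immersed (its ends run into the cusp). One must argue that the image of $[\Sigma]$ under the map $H_2(\Sigma, \partial\Sigma) \to H_2(M,\partial M)$ is a well-defined nonzero multiple of $[F]$ — nonzero because a totally geodesic surface is $\pi_1$-injective and non-peripheral, and the ``multiple of $[F]$'' because $H_2(M,\partial M;\bbZ) \cong \bbZ\langle [F]\rangle$ for a knot complement. The second delicate point is ensuring the Galois conjugate $\rho$ actually restricts to a discrete faithful (Fuchsian) representation on $\pi_1(\Sigma)$: here one uses that totally geodesic means $\Gamma$ stabilizes a geometric plane, so the restriction of the geometric representation is Fuchsian, and then the trace-field hypotheses guarantee that the relevant Galois conjugate of the \emph{ambient} representation restricts compatibly — this is exactly where ``no quadratic subfield'' and ``no real subfield besides $\bbQ$'' are used, mirroring \cref{prop:NoClosedTG}. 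Once these two structural facts are in place, the inequality $e_\rho([F]) < 2g-1$ contradicts the Milnor--Wood bound applied to $\Sigma$, completing the proof.
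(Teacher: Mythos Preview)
Your overall architecture matches the paper's: assume a totally geodesic surface $\Sigma$ exists, use the trace-field hypotheses to force $\pi_1(\Sigma)$ to have rational traces (and $\Sigma$ orientable), write $[\Sigma]=n[F]$ in $H_2(M,\partial M;\bbZ)$, and compare $e_\rho$ against the Thurston norm. Two points need correction.

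\textbf{The nonvanishing of $n$.} Your justification ``nonzero because a totally geodesic surface is $\pi_1$-injective and non-peripheral'' is not valid: $\pi_1$-injective, non-peripheral immersed surfaces can perfectly well be null-homologous in $H_2(M,\partial M;\bbZ)$. The paper closes this gap by invoking Calegari's theorem (\cite[Theorem 4.4]{C06}, restated in the paper as \cref{thm:NoSeparatingTGS}): a totally geodesic surface with rational traces is norm-minimizing in its homology class whenever the trace field has a real place. Since the existence of $\rho$ guarantees a real place, $\Sigma$ cannot be null-homologous, so $n\neq 0$. Without this input the argument has a genuine hole.

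\textbf{Equality versus Milnor--Wood.} You frame the Euler-class step as an application of the Milnor--Wood \emph{inequality}, but the inequality $|e_\rho([\Sigma])|\le -\chi(\Sigma)$ alone is not enough: combining it with $|e_\rho([F])|<2g-1$ and the Thurston-norm bound only yields $|e_\rho([\Sigma])|<-\chi(\Sigma)$, which is no contradiction. What is needed is the \emph{equality} $|e_\rho([\Sigma])|=-\chi(\Sigma)$. This holds because the traces of $\pi_1(\Sigma)$ lie in $\bbQ$, so the Galois conjugate $\rho$ leaves them fixed and $\rho|_{\pi_1(\Sigma)}$ is again discrete, faithful, and Fuchsian of finite coarea. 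You allude to this (``equality characterizing the Fuchsian/geometric case''), but the body of your argument is organized around the inequality in the wrong direction; make the equality the centerpiece. With these two fixes the chain
\[
-\chi(\Sigma)=|e_\rho([\Sigma])|=|n|\,|e_\rho([F])|<|n|(2g-1)=-|n|\chi(F)\le -\chi(\Sigma)
\]
gives the contradiction, exactly as in the paper.
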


We found an additional $24$ hyperbolic knots to which Theorem \ref{thm:ModifiedCalegari} can be applied and thus which have no totally geodesic surfaces in their complement. Together, we have the following result.

\begin{thm}
\label{thm:TGSKnotsUnderNineCrossings}
Among the $79$ hyperbolic knots in $\mathcal{K}$, the complements of the following 47 knots 
\begin{equation}
  \begin{aligned}
        \{ 
        & 6_2,7_3, 7_5, 7_6, 8_2, 8_4, 8_5, 8_6, 8_7, 8_{10}, 8_{14},  8_{15}, 8_{16}, 8_{20}, 9_3, \\
        & 9_4, 9_6, 9_7, 9_8, 9_9, 9_{10}, 9_{11}, 9_{12}, 9_{13}, 9_{15}, 9_{16}, 9_{17}, 9_{18}, 9_{20}, 9_{21}, \\
        & 9_{22}, 9_{23}, 9_{24}, 9_{25}, 9_{26}, 9_{29}, 9_{31}, 9_{32}, 9_{34}, 9_{36}, 9_{38}, 9_{39}, 9_{42}, 9_{43}, 9_{45}, 9_{48}, 9_{49}
        \}
    \end{aligned}    
\label{eq:NoTGSKnots}
\end{equation}
contain no totally geodesic surfaces.
\end{thm}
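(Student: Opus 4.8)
The plan is to combine two separate obstruction mechanisms, each applied to a computable list of knots, and verify that their union is exactly the 47-knot set in \eqref{eq:NoTGSKnots}. First I would observe that for every knot in $\mathcal{K}$ other than the six exceptional cases $\{4_1,5_2,7_2,7_4,9_2,9_{35}\}$, the relevant number-theoretic hypotheses (trace field of odd degree with no proper real subfield besides $\bbQ$, and in particular no quadratic subfield) hold; this is a finite check using the trace-field data in \cite{knotinfo} and SnapPy \cite{SnapPy}, and it also supplies the "no closed totally geodesic surface" conclusion via \cref{prop:NoClosedTG}. So for each of the 73 remaining knots it suffices to rule out immersed totally geodesic surfaces with nonempty boundary, i.e. surfaces meeting the cusp.

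Next I would split the 73 knots into the fibered ones and the non-fibered ones. For the fibered knots, the plan is to invoke Calegari's obstruction directly: if $M$ fibers over $S^1$ with fiber $F$ of genus $g$, then any $\PSL_2(\bbR)$ representation $\rho$ has $|e_\rho([F])|\le 2g-1$ on the fiber class with equality forcing Fuchsian-ness compatible with the fibration, and Calegari's argument \cite[Corollary 4.6]{C06} shows that a strict inequality for every Galois conjugate of the holonomy rules out totally geodesic surfaces. Using SnapPy I would compute, for each fibered knot among the 73, the fiber genus (equivalently the degree of the Alexander polynomial, since fibered knots have monic Alexander polynomial of degree $2g$) and the relative Euler classes $e_\rho([F])$ over all real Galois conjugates $\rho$ of the geometric representation, and record the 23 knots for which the strict inequality holds. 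For the non-fibered knots I would apply \cref{thm:ModifiedCalegari} in the same computational spirit: compute a minimal-genus Seifert surface and its class $[F]$ (genus from \cite{knotinfo}), compute $e_\rho([F])$ for each real Galois conjugate $\rho$, and record the knots satisfying $e_\rho([F])<2g-1$; this yields the additional 24 knots. Taking the union of the 23 fibered and 24 non-fibered knots gives the list in \eqref{eq:NoTGSKnots}, and one checks the count is $47$ and that none of the six exceptional knots appears.

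The main obstacle is the explicit computation of the relative Euler class $e_\rho([F])\in H^2(M,\partial M;\bbZ)\cong\bbZ$ for each Galois conjugate representation $\rho:\pi_1(M)\to\PSL_2(\bbR)$. Concretely one must: (i) enumerate the real places of the trace field $K$ and form the corresponding conjugate representations, checking that they land in $\PSL_2(\bbR)$ (as opposed to $\mathrm{SU}(2)$ or an indefinite form) and are nonelementary; (ii) choose a suitable cell structure or ideal triangulation of $(M,\partial M)$ and a fundamental cocycle representing $[F]$, using that $F$ is dual to a generator of $H^1(M;\bbZ)$ coming from the abelianization; and (iii) evaluate the bounded Euler cocycle (the $\PSL_2(\bbR)$ translation-number/rotation-number cocycle) on $2$-cells, summing to get the integer $e_\rho([F])$. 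Care is needed with orientation conventions and with the fact that $e_\rho$ is only well-defined up to sign, so the inequality should be read with absolute values. This is a routine but delicate finite computation; once it is organized, the proof is simply a table lookup, and I would present the resulting data (fiber/Seifert genus, Euler numbers over each real conjugate) in a table, with the union producing \eqref{eq:NoTGSKnots} and hence \cref{thm:TGSKnotsUnderNineCrossings}.
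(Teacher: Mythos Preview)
Your approach is essentially the paper's: partition the 47 knots into the 23 fibered ones (handled directly by \cite[Corollary~4.6]{C06}) and the 24 non-fibered ones (handled by \cref{thm:ModifiedCalegari} together with the Euler-class computations recorded in \cref{tab:EulerClassOfKnots}).

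That said, several of your preliminary assertions are factually wrong and would not survive the ``finite check'' you describe. First, both obstructions require a Galois conjugate $\rho:\pi_1(M)\to\PSL_2(\bbR)$, hence a real place of the trace field; only 54 of the 79 knots have one, so your claim that the hypotheses hold for all 73 non-exceptional knots fails at the outset. Second, among those 54 the knot $8_{18}$ has trace field containing $\bbQ(\sqrt 2)$, so the no-proper-real-subfield hypothesis of \cref{thm:ModifiedCalegari} fails there. Third, ``odd degree'' is neither the hypothesis of \cref{thm:ModifiedCalegari} nor true for most of the 47 knots (e.g.\ $7_3$ has a degree-$6$ trace field, $7_5$ degree $8$); the correct hypothesis is that $K$ contain no proper real subfield besides $\bbQ$ and no quadratic subfield. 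The paper's accounting is: 54 knots with a real place, remove $8_{18}$, remove the six genus-one knots $\{5_2,7_2,7_4,9_2,9_5,9_{35}\}$ (for which $|e_\rho([F])|\le 1=\|[F]\|$ and strict inequality is impossible), leaving exactly the 47. Note that your exception list contains $4_1$ (whose trace field has no real place anyway) but omits $9_5$. Finally, for fibered knots Calegari's result already \emph{proves} the strict inequality $e_\rho([F])<\|[F]\|$ at every real place, so the Euler-class computation you propose for them is unnecessary; only the 24 non-fibered knots require it.
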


\subsection{Summary}
We summarize known results about totally geodesic surfaces in the complement of knots in $\mathcal{K}$ as follows:
\begin{itemize}
    \item the complement of the knot $4_1$ contains infinitely many totally geodesic surfaces \cite[Corollary 1]{Reid91b}.
    \item the complement of the knots $5_2$, $7_2$, $7_4$, $9_2$ and $9_{35}$ contains a unique totally geodesic surface.
    \item the complement of the knots in the set
    \begin{align*}
        \{ 
        & 6_2,7_3, 7_5, 7_6, 8_2, 8_4, 8_5, 8_6, 8_7, 8_{10}, 8_{14},  8_{15}, 8_{16}, 8_{20}, 9_3, \\
        & 9_4, 9_6, 9_7, 9_8, 9_9, 9_{10}, 9_{11}, 9_{12}, 9_{13}, 9_{15}, 9_{16}, 9_{17}, 9_{18}, 9_{20}, 9_{21}, \\
        & 9_{22}, 9_{23}, 9_{24}, 9_{25}, 9_{26}, 9_{29}, 9_{31}, 9_{32}, 9_{34}, 9_{36}, 9_{38}, 9_{39}, 9_{42}, 9_{43}, 9_{45}, 9_{48}, 9_{49}
        \}
    \end{align*}
    contains no totally geodesic surfaces.
        \item the complement of the knots in the set
    \[
\mathcal{K} \setminus \{9_{46},9_{47},9_{48},9_{49}\}
\]
does not contain any closed embedded totally geodesic surface \cite[Theorem 1 and Corollary 4]{MeR92}.
\end{itemize}
This is by no means a comprehensive description of totally geodesic surfaces in $\mathcal{K}$. We suspect that the knot $9_{41}$ contains an immersed totally geodesic surface with cusps in its complement. We also think that there are no totally geodesic surfaces in the remaining knot complements. See Section \ref{sec:Questions} for further discussion.    

\subsection{Outline of the paper} In Section \ref{sec:prelim}, we recall some results that are used in the proof of Theorem \ref{t:7-4!surf} and Theorem \ref{t:pret!surf}. We will also recall Calegari's obstruction to the existence of totally geodesic surfaces and give a prove of Theorem \ref{thm:ModifiedCalegari}. In Section \ref{sec:74}, we give a proof of Theorem \ref{t:7-4!surf}. In Section \ref{sec:BalancedPretzelKnots}, we give a description of the geometric representation and the trace field of the family of 3-tangle pretzel knots in Theorem \ref{thm:PretzelTraceField} and Corollary \ref{cor:PretzelTraceField}. Using these results, we prove Theorem \ref{t:pret!surf}. In Section \ref{sec:SeifertSurfacesEulerClassObstructions}, we outline our computational approach in proving Theorem \ref{thm:TGSKnotsUnderNineCrossings}. In Section \ref{sec:Questions}, we discuss some interesting questions arising from this paper. 



\section{Preliminaries}
\label{sec:prelim}
We begin with presenting some previously known statements about the behavior of totally geodesic surfaces in hyperbolic $3$-manifolds. 

\subsection{Geometric and arithmetic constraints on totally geodesic surfaces}

The following proposition of Reid \cite[Proposition 2]{Reid91} gives an arithmetic constraint on the existence of closed totally geodesic surfaces.  In particular, it rules out the existence of closed totally geodesic surfaces.
\begin{prop}
\label{prop:NoClosedTG}
Let $\Gamma$ be a non-cocompact Kleinian group of finite covolume and satisfying the following two conditions:
\begin{itemize}
	\item $\bbQ(\tr \Gamma)$ is of odd degree over $\bbQ$ and contains no proper real subfield other than $\bbQ$.  
	\item The traces of $\Gamma$ are algebraic integers.
\end{itemize}
Then $\Gamma$ contains no cocompact Fuchsian groups and at most one commensurability class (up to conjugacy in $\PSL_2(\bbC)$) of non-cocompact Fuchsian subgroup of finite covolume.
\end{prop}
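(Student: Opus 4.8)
The plan is to recast the statement in terms of quaternion algebras, exploiting the fact that a Fuchsian subgroup, being conjugate in $\PSL_2(\bbC)$ into $\PSL_2(\bbR)$, has real traces. Let $F \leq \Gamma$ be a nonelementary Fuchsian subgroup (every subgroup occurring in the statement is of this type, since a cocompact or finite-covolume Fuchsian group is automatically finitely generated and nonelementary). Because $F$ is conjugate into $\PSL_2(\bbR)$ and traces are conjugation-invariant, $\tr F \subseteq \bbR$, so $\bbQ(\tr F)$ is a real subfield of $\bbQ(\tr\Gamma)$; it is a subfield because $F \leq \Gamma$. Moreover $\bbQ(\tr\Gamma)$ is itself not contained in $\bbR$: otherwise $\Gamma$ would be conjugate into $\PSL_2(\bbR)$, contradicting that it is a non-cocompact Kleinian group of finite covolume. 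Hence the hypothesis that $\bbQ(\tr\Gamma)$ has no proper real subfield besides $\bbQ$ forces $\bbQ(\tr F) = \bbQ$, and therefore, since the traces of $\Gamma$ (hence of $F$) are algebraic integers, $\tr F \subseteq \bbZ$ and the invariant quaternion algebra $A_0F$ is a quaternion algebra over $\bbQ$. I expect this reduction to $\bbQ$ to be the crux of the argument; after it, everything is local arithmetic of quaternion algebras.

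For the cocompact case: a cocompact Fuchsian group has no parabolics, so $A_0F$ is a \emph{division} algebra over $\bbQ$, and since $F \subset \PSL_2(\bbR)$ it is split at the real place of $\bbQ$; hence $A_0F$ is ramified at an even number, necessarily at least $2$, of finite rational primes. On the other hand $\Gamma$, being non-cocompact of finite covolume, contains parabolics, so $A_0\Gamma \cong M_2(k\Gamma)$ where $k\Gamma$ is the invariant trace field. Using the structural fact that for a finitely generated nonelementary subgroup $F$ of a finite-covolume Kleinian group one has $kF \subseteq k\Gamma$ and $A_0F \otimes_{kF} k\Gamma \cong A_0\Gamma$ (Maclachlan--Reid), we conclude that $k\Gamma$ splits $A_0F$. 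But $k\Gamma \subseteq \bbQ(\tr\Gamma)$, which has odd degree over $\bbQ$, so $[k\Gamma:\bbQ]$ is odd; and a number field of odd degree over $\bbQ$ cannot split a quaternion division algebra over $\bbQ$, because at a ramified prime $p$ every local degree $[k\Gamma_w : \bbQ_p]$ (over places $w\mid p$) would have to be even, forcing $[k\Gamma:\bbQ] = \sum_{w\mid p}[k\Gamma_w:\bbQ_p]$ to be even. This contradiction rules out cocompact Fuchsian subgroups.

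For the remaining assertion: let $F$ be a non-cocompact finite-covolume Fuchsian subgroup of $\Gamma$. Such an $F$ has parabolics, so $A_0F \cong M_2(\bbQ)$; together with $\bbQ(\tr F)=\bbQ$ and integrality of traces, Takeuchi's arithmeticity criterion for non-cocompact Fuchsian groups shows $F$ is arithmetic. Every non-cocompact arithmetic Fuchsian group lies in the single commensurability class of $\PSL_2(\bbZ)$, since its defining quaternion algebra, being split at all real places, must be $M_2(\bbQ)$ over $\bbQ$. Consequently, given two such subgroups $F_1,F_2 \leq \Gamma$, after conjugating each into $\PSL_2(\bbR)$ they are both commensurable with $\PSL_2(\bbZ)$, hence commensurable with each other; unwinding the conjugations produces $g\in\PSL_2(\bbC)$ with $gF_1g^{-1}$ commensurable with $F_2$, which is precisely the claim that there is at most one commensurability class up to conjugacy in $\PSL_2(\bbC)$.

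The points requiring care, rather than any deep obstacle, are: justifying cleanly that $\bbQ(\tr\Gamma)\not\subseteq\bbR$ and hence $\bbQ(\tr F)=\bbQ$; verifying the hypotheses of the cited structural identity $A_0F\otimes_{kF}k\Gamma\cong A_0\Gamma$ for a Fuchsian subgroup that need not itself have finite covolume (they hold because finite-covolume and cocompact Fuchsian groups are finitely generated and nonelementary); and, in the last step, distinguishing "$\PSL_2(\bbC)$-conjugate" from "commensurable" so that the final sentence matches the statement exactly. The essential mechanism is the parity clash between a forced even local degree and the odd degree of $\bbQ(\tr\Gamma)$.
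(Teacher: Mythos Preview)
The paper does not give its own proof of this proposition; it is quoted as a preliminary result and attributed to Reid \cite[Proposition~2]{Reid91}. Your argument is correct and is, in substance, Reid's original proof via quaternion algebras: reduce to $\bbQ(\tr F)=\bbQ$ using the real-subfield hypothesis, then play the split algebra $A_0\Gamma\cong M_2(k\Gamma)$ (coming from the parabolics of $\Gamma$) against the odd degree of $k\Gamma$ over $\bbQ$.

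One small point of presentation: the implication ``$F$ cocompact $\Rightarrow$ $A_0F$ is a division algebra over $\bbQ$'' is not a general fact about Fuchsian groups without parabolics; it uses the additional information that $kF=\bbQ$ and $\tr F\subseteq\bbZ$, which (via Takeuchi) forces $F$ to be arithmetic, so that cocompactness is equivalent to $A_0F$ being a division algebra. You invoke Takeuchi explicitly in the non-cocompact paragraph, so the ingredient is present---it just deserves to be cited at the earlier step as well. With that adjustment the write-up is complete.
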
	

For a finite covolume Kleinian group $\Gamma$, we say that $p\in \partial_\infty \bbH^3$ is a \emph{cusp point} if $p$ is a fixed point of a parabolic isometry in $\Gamma$. We say a geodesic in a cusped finite-volume hyperbolic $3$-manifold $\bbH^3 /\Gamma$ is a \emph{cusp-to-cusp geodesic} if it is the image of a geodesic in $\bbH^3$ connecting two cusp points under the action of $\Gamma$ on $\bbH^3$.  The following lemma of Fisher, Lafont, Miller, and Stover \cite[Lemma 3.1]{FLMS21} describes the intersection of totally geodesic hypersurface and immersed totally geodesic submanifolds in finite-volume hyperbolic $n$-manifold. We restate their lemma for $n=3$.

\begin{lem}
\label{lem:IntersectionsOfTGS}
Let $M$ be a complete finite volume hyperbolic $3$-manifold with at least $1$ cusp. Suppose that $\Sigma_1$ and $\Sigma_2$ are two distinct properly immersed totally geodesic surfaces in $M$ such that $\Sigma_1 \cap \Sigma_2$ is nonempty. Then $\Sigma_1 \cap \Sigma_2$ is the union of closed geodesics and cusp-to-cusp geodesics.
\end{lem}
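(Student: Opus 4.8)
The plan is to reduce, by passing to the universal cover $\bbH^3$, to a statement about a single connected component of $\Sigma_1\cap\Sigma_2$, and then to show that any such component which is not a closed geodesic must run from a cusp of $M$ to a cusp of $M$.

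First I would set up the local picture. A properly immersed totally geodesic surface in $M=\bbH^3/\Gamma$ lifts to a $\Gamma$-invariant family of totally geodesic planes. At a point where $\Sigma_1$ and $\Sigma_2$ meet, lifting to $\bbH^3$ exhibits the two surfaces locally as totally geodesic planes through a common point; two such planes are either equal or meet transversally in a single complete geodesic, and if they were equal then $\Sigma_1$ and $\Sigma_2$ would agree on a neighbourhood of the point, which (since a totally geodesic surface is determined by a point together with its tangent plane) would force $\Sigma_1=\Sigma_2$, contrary to hypothesis. Hence $\Sigma_1$ and $\Sigma_2$ are transverse along their intersection, so $\Sigma_1\cap\Sigma_2$ is an immersed, locally geodesic $1$-submanifold of $M$, each component of which is a complete geodesic: either a closed geodesic or a geodesic line. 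It then suffices to treat the line components.

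So I would fix a line component $\gamma$ and parametrize $\Sigma_1$ by a local isometry $f_1\colon S_1\to M$, where $S_1$ is a complete hyperbolic surface, so that $\gamma$ appears as a component of $f_1^{-1}(\Sigma_2)\subseteq S_1$. Because $\Sigma_2$ is properly immersed, its image is closed in $M$, so $f_1^{-1}(\Sigma_2)$ is a closed subset of $S_1$; together with the transversality just noted, this makes $\gamma$ a properly embedded geodesic line in $S_1$, so $\gamma(t)$ leaves every compact subset of $S_1$ as $t\to\pm\infty$. Since $\Sigma_1$ is properly immersed, $f_1$ is a proper map, and therefore $f_1(\gamma(t))$ leaves every compact subset of $M$ as $t\to\pm\infty$. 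I would then feed this into the thick-thin decomposition of the finite-volume manifold $M$: its $\epsilon$-thick part is compact, so for $|t|$ large $f_1(\gamma(t))$ lies in the $\epsilon$-thin part, which is an open set with finitely many components --- some cusp neighbourhoods, some Margulis tubes about short closed geodesics --- so, by connectedness, each end of $\gamma$ eventually maps into a single such component. A Margulis tube is relatively compact in $M$, while $f_1(\gamma(t))$ leaves every compact set, so the trapping component cannot be a tube; hence each end of $\gamma$ eventually lies in a single cusp neighbourhood, and therefore lifts to a geodesic ray eventually contained in a fixed horoball, whose ideal endpoint is the parabolic fixed point at the centre of that horoball. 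Thus a lift $\tilde\gamma$ of $\gamma$ joins two cusp points, and $\gamma$ is a cusp-to-cusp geodesic.

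The step I expect to require the most care is the reduction in the second paragraph --- verifying transversality of $\Sigma_1$ and $\Sigma_2$ along their intersection, so that $\Sigma_1\cap\Sigma_2$ is genuinely a $1$-submanifold. The conceptual crux, however, is the observation in the third paragraph that a component of $f_1^{-1}(\Sigma_2)$ is proper not merely as a subset of $M$ but as a subset of the domain $S_1$; it is this --- once composed with the proper map $f_1$ --- that rules out the a priori possibility that an end of $\gamma$ spirals onto a short closed geodesic of $M$ contained in $\Sigma_1$ rather than exiting a cusp. Everything else is standard hyperbolic geometry.
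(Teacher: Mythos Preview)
The paper does not supply a proof of this lemma; it is quoted as \cite[Lemma~3.1]{FLMS21} (restated for $n=3$) and used as a black box throughout. Your argument is correct and fills in that proof. The two essential ingredients you identify --- that distinct totally geodesic planes through a common point of $\bbH^3$ are transverse, and that properness of the immersion $f_2$ makes $f_1^{-1}(\Sigma_2)$ a closed, locally finite union of complete geodesics in $S_1$, hence a proper subset whose non-closed leaves must exit the cusps --- are exactly what is needed, and the thick--thin argument in $M$ finishes it.

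One small imprecision worth tightening: a connected component of $\Sigma_1\cap\Sigma_2$ (or of $f_1^{-1}(\Sigma_2)$) need not be a single geodesic, since several geodesics of the intersection may cross at a point. The clean formulation is that $f_1^{-1}(\Sigma_2)$ is a locally finite union of complete geodesics in $S_1$; local finiteness (which is where properness of $\Sigma_2$ enters) then forces each individual geodesic in this union to be a closed subset of $S_1$, hence properly embedded, and you analyse each such geodesic separately. With that adjustment your argument goes through verbatim and is essentially the standard proof.
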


\subsection{Boundary slope and the trace condition}
\label{subsec:BoundarySlopeTraceCondition}
A primary conceptual tool in proving Theorem \ref{t:7-4!surf} and Theorem \ref{t:pret!surf} is using the boundary slope of a totally geodesic surface in concert with Lemma \ref{lem:IntersectionsOfTGS}. There are two approaches to boundary slopes --- one in the manifold itself and one in the universal cover.

Let $M$ be the complement of a hyperbolic knot $J$ in $S^3$ and $T$ be the torus boundary of a small tubular neighborhood of $J$ in $S^3$. When $J$ is not the unknot, the fundamental group of $T$ injects into the fundamental group of $M$. We fix a basis for $\pi_1(T) \cong \bbZ^2$ by choosing $a,\ell \in \pi_1(M)$ such that $a$ is a meridian with corresponding homological longitude $\ell$ of the knot $J$. In a hyperbolic knot complement $M$, the neighborhood of each cusp of a cusped totally geodesic surface $\Sigma$ must intersect the torus neighborhood $T$ of the knot. Because $\Sigma$ has finite area, each intersection is a closed curve on the embedded torus neighborhood $T$; thus, each curve represents the element $a^p\ell^q \in \pi_1(M)$ up to conjugation. The ratio $p/q \in \bbQ \cup \{\infty\}$ is the \emph{boundary slope} of the corresponding cusp of $\Sigma$.

This topological description can be expressed equivalently in the universal cover. Let us identify the universal cover $\mathbb{H}^3$ of $M$ with the upper half-space model and the \emph{visual boundary} $\partial_\infty \bbH^3$ with $\mathbb{C} \cup \{\infty\}$. The action of $\pi_1(M)$ on $\bbH^3 \cup \partial_\infty \bbH^3$ is given by a discrete faithful representation. Up to conjugation, we assume that under the discrete faithful representation $\rho:\pi_1(M) \to \PSL_2(\bbC)$, the images $\rho(a)$ and $\rho(\ell)$ are parabolic isometries fixing $\infty$ as a cusp point. Let $\Sigma$ be a properly immersed cusped totally geodesic surface in $M$. Since $M$ has one cusp and $\Sigma$ is properly immersed, the neighborhood of each cusp of $\Sigma$ must be contained in the cusp neighborhood of $M$. Given any cusp of $\Sigma$, we can consider a lift $\widetilde{\Sigma}$ of $\Sigma$ to $\bbH^3$ by putting the cusp point of $\Sigma$ at $\infty$. The hyperplane $\widetilde{\Sigma}$ intersects a horoball based at $\infty$ along some horocycle. The stabilizer of this horocycle is generated by an isometry of the form $\rho(a^p\ell^q)$.  See Figure \ref{fig:bdry_slope} for a visualization of a totally geodesic surface with a boundary slope $1/0$ and the view from $\infty$ of its lifts to the universal cover.

\begin{figure}
  \captionsetup{width=0.3\linewidth}
    \centering
    \subfloat[\centering Cusp in knot complement with knot neighborhood]{\hspace{8mm}\includegraphics[height=5cm]{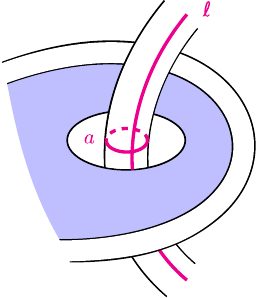}}
    \hspace{2cm}
    \subfloat[\centering View from $\infty$ in universal cover]{{\hspace{8mm}\includegraphics[height=5cm]{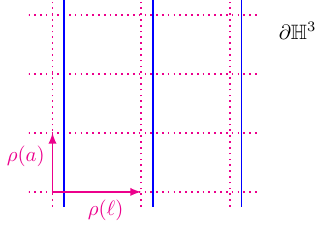}}}
  \captionsetup{width=\linewidth}
    \caption{Totally geodesic surface (blue) with boundary slope $1/0$}
    \label{fig:bdry_slope}
\end{figure}

Under some mild condition on the traces of $\rho(\pi_1(M))$, the complete set of boundary slopes of all totally geodesic surfaces in $M$ are computed in Lemma \ref{lem:7-4_bdry} and Lemma \ref{lem:pret0} (in the style of \cite[Lemma 3.5]{LePalmer}). The following is a critical preliminary calculation in our analysis. Up to a further conjugation, we assume that 
\[\rho(a) = \begin{pmatrix}
1 & 1 \\ 0 & 1
\end{pmatrix} \quad\text{and}\quad \rho(\ell) = \begin{pmatrix}
-1 & -\tau \\ 0 & -1 
\end{pmatrix}.\]For convenience, let us momentarily drop $\rho$ from our notation and identify elements of $\pi_1(M)$ with its image in $\PSL_2(\bbC)$ under $\rho$. We suppose that the trace field $\bbQ(\tr \pi_1(M))$ contains no proper real subfield besides $\bbQ$ and the elements of $\tr \pi_1(M)$ are all algebraic integers. Let $\Sigma$ be a totally geodesic surface in $M$. We note that the set $\tr \pi_1(\Sigma)$ must contain only real algebraic integers in $\bbQ(\tr \pi_1(M))$. The condition that $\bbQ(\tr \pi_1(M))$ does not contain any proper real subfield other than $\bbQ$ implies that $\tr \pi_1 (\Sigma) \subseteq \bbZ$. Suppose further that $\Sigma$ has cusps with boundary slopes $p/q$ and $m/n$. Then there exist hyperplane lifts $\widetilde{\Sigma}_1$ and $\widetilde{\Sigma}_2$ whose visual boundaries contain $\infty$ and that are stabilized by $a^p \ell^q$ and $a^m \ell^n$, respectively. There exists 
\begin{equation*}
\gamma = \begin{pmatrix}
\alpha & \beta \\ \delta & \eta
\end{pmatrix} \in \pi_1(M)
\end{equation*}
such that $\gamma(\widetilde{\Sigma}_2) = \widetilde{\Sigma}_1$. The conjugate $\gamma a^m \ell^n \gamma^{-1}$ is in $\Stab(\widetilde{\Sigma}_1)$ because $a^m \ell^n \in \Stab(\widetilde{\Sigma}_2)$.  Since $a^p \ell^q \in \Stab(\widetilde{\Sigma}_1)$ as well, the trace of their product
\begin{equation*}
  \tr(\gamma a^m \ell^n \gamma^{-1} a^p \ell^q) 
  = (-1)^{n+q+1}[-2 + (m+n \tau)(p+q \tau) \delta^2 ]
\end{equation*}
must be in $\bbZ$, which is true if and only if
\begin{equation}
  \label{eq:TraceCondition}
  \delta^2(nq\tau^2 + (mq + np)\tau + mp ) \in \bbZ
\end{equation}
We shall refer to \eqref{eq:TraceCondition} as the trace condition. When $M$ has at least one cusp, the entries of $\pi_1(M)$ can be taken to be in the trace field. By rewriting \eqref{eq:TraceCondition} in terms of a $\bbQ$-basis of the trace field, we obtain a set of equations that the boundary slopes of $\Sigma$ must satisfy. Solving this set of equations allows us to compute the complete set of boundary slopes of totally geodesic surfaces in $M$.  

\subsection{Trace field and orientability of totally geodesic surfaces}

\begin{lem}
\label{lem:nonori}
If an orientable hyperbolic 3-manifold $M = \bbH^3/\Gamma$ contains a non-orientable totally geodesic surface, then the trace field of $\Gamma$ contains either a real subfield properly containing $\bbQ$ or contains an imaginary quadratic field.
\end{lem}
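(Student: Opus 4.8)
The plan is to use the fact that an orientation-reversing isometry of a hyperbolic plane $P\subset\bbH^3$, written as an element of $\PSL_2(\bbC)$ in coordinates adapted to $P$, is represented by a matrix with purely imaginary entries, and then to read off the consequences for the trace field from the trace of such an element.

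\textbf{Step 1: produce a glide reflection in $\Gamma$.} An immersed totally geodesic surface $\Sigma$ in $M$ is $\pi_1$-injective: a lift of $\Sigma$ to $\bbH^3$ is a totally geodesic plane $P$, the image $\Delta\le\Gamma$ of $\pi_1(\Sigma)$ stabilizes $P$, and the $\Delta$-action on $P\cong\bbH^2$ identifies $\Sigma$ with $P/\Delta$. Since $\Sigma$ is non-orientable, $\Delta\not\le\mathrm{Isom}^+(P)$, so we may pick $g\in\Delta$ acting on $P\cong\bbH^2$ by an orientation-reversing isometry. As $M$ is a manifold, $\Gamma$ --- hence $\Delta$ --- is torsion-free, so $g$ is not a reflection (reflections have order $2$); therefore $g$ is a glide reflection along a geodesic of $P$ with positive translation length $\ell$.

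\textbf{Step 2: compute $\tr(g)$.} Conjugating in $\PSL_2(\bbC)$, we may assume $\partial P=\bbR\cup\{\infty\}\subset\partial\bbH^3$ and that the axis of $g$ joins $0$ and $\infty$. Then $\Stab_{\PSL_2(\bbC)}(P)$ consists of the Möbius transformations with real coefficients, i.e.\ $\mathrm{PGL}_2(\bbR)$, and the orientation-reversing isometries of $P$ --- equivalently, those swapping the two sides of $P$ in $\bbH^3$ --- are exactly those represented by real matrices of negative determinant. In these coordinates $g$ acts on $\partial\bbH^3$ by $z\mapsto -e^{\ell}z$, i.e.\ by $\mathrm{diag}(-e^{\ell},1)$, which has determinant $-e^{\ell}$. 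Rescaling to $\mathrm{SL}_2(\bbC)$ multiplies the entries by $1/(ie^{\ell/2})$, giving $\mathrm{diag}(ie^{\ell/2},-ie^{-\ell/2})$, so
\[
\tr(g)=i\bigl(e^{\ell/2}-e^{-\ell/2}\bigr)=2i\sinh(\ell/2).
\]
Since $\ell>0$, this is a nonzero purely imaginary number; hence $\tr(g)^2=-4\sinh^2(\ell/2)$ is a negative real number, and $\tr(g)\in K:=\bbQ(\tr\Gamma)$ because $g\in\Gamma$.

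\textbf{Step 3: split into two cases.} Write $\tr(g)^2=-c$ with $c\in\bbR_{>0}$. If $c\notin\bbQ$, then $\bbQ(c)=\bbQ(\tr(g)^2)$ is a subfield of $K$ contained in $\bbR$ and properly containing $\bbQ$. If $c\in\bbQ$, then $\tr(g)=\sqrt{-c}\in K$ with $-c$ a negative rational, so $\bbQ(\sqrt{-c})\subseteq K$ is an imaginary quadratic field. Either way $K$ contains a real subfield properly containing $\bbQ$ or an imaginary quadratic field, which is the assertion. The computations here are routine; the one place that needs care is Step 1, namely extracting from the non-orientability of $\Sigma$ an element of $\Gamma$ that restricts to an orientation-reversing isometry of the plane, together with the use of torsion-freeness of $\Gamma$ to ensure this element is a genuine glide reflection (a reflection has trace $0$ and would give no information).
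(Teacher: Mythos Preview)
Your proof is correct and follows essentially the same approach as the paper: produce an element of $\Gamma$ with purely imaginary trace (the paper phrases this as $\gamma$ being conjugate into $\begin{pmatrix} i & 0 \\ 0 & -i \end{pmatrix}\PSL_2(\bbR)$, you compute the trace of the glide reflection explicitly as $2i\sinh(\ell/2)$), then split on whether its square lies in $\bbQ$. Your version is in fact slightly more careful, since you use torsion-freeness of $\Gamma$ to rule out reflections and hence guarantee the trace is nonzero; the paper leaves this point implicit.
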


\begin{proof}
If an orientable hyperbolic $3$-manifold $M$ contains a non-orientable totally geodesic surface, then there must exist $\gamma \in\Gamma$ such that the image of $\gamma$ under the discrete faithful representation is conjugate into 
\[
\begin{pmatrix}
i & 0 \\ 0 & -i
\end{pmatrix}\PSL_2(\bbR).
\] 
The trace field of the $3$-manifold must contain purely imaginary elements, say $\alpha \in i\bbR$. If $\alpha^2 \in \bbQ$, then $\bbQ(\alpha)$ is an imaginary quadratic subfield of the trace field of $\Gamma$. Otherwise, the trace field of $\Gamma$ contains a real subfield $\bbQ(\alpha^2)$ that properly contains $\bbQ$.
\end{proof}

\begin{remark} 
\label{rem:nonori}
If an orientable hyperbolic $3$-manifold $M$ contains a non-orientable totally geodesic surface, then there must exist $\gamma \in \pi_1(M)$ such that the image of $\gamma$ under the discrete faithful representation is conjugate to the product of an element of $\PSL_2(\bbR)$ and the order $2$ rotation diagonal matrix.
Then the trace field of the $3$-manifold must contain purely imaginary elements.
This is impossible when the trace field of odd degree.  Since almost all knot complements we will address in this article will have odd trace field, we will safely presume the exclusion of non-orientable surfaces.
We will individually address relevant knot complements whose trace field has even degree.
\end{remark}

\subsection{Calegari's obstruction of totally geodesic surfaces using Euler class}

Finally, we will recall a method introduced by Calegari in \cite{C06} to obstruct the existence of totally geodesic surfaces in certain fibered knot complement in rational homology sphere \cite[Corollary 4.6]{C06}. We will start by recalling the definition of the Euler class associated to a $\PSL_2(\mathbb{R})$-representation of $\pi_1(M)$ and the definition of Thurston norm. 

\subsubsection{The Euler class and Thurston norm}
\label{subsec:EulerClass}

Let $M$ be the complement of a knot in a rational homology 3-sphere and $\rho: \pi_1(M) \to \PSL_2(\bbR)$ be a representation such that $\rho(\pi_1(\partial M))$ is parabolic. Since $\PSL_2(\bbR)$ acts on $\mathbb{RP}^1$, we have the associated circle bundle of $M$ defined by  
\begin{equation}
    \label{eq:Erho}
    E_\rho = \Tilde{M} \times \mathbb{RP}^1/ (x,p) \sim (\gamma\cdot x,\rho(\gamma)(p)).
\end{equation}
The obstruction of finding a section of $E_\rho$ is measured by the Euler class $e_\rho \in H^2(M;\bbZ)$. When $M$ is the complement of a knot in a rational homology 3-sphere, $H^2(M;\bbZ) = 0$. Therefore, the Euler class invariant vanishes for $M$. Nevertheless, we can still define a relative Euler class when $\rho(\pi_1(\partial M))$ is parabolic. Since $\pi_1(\partial M)$ is abelian, the image $\rho(\pi_1(\partial M))$ has a unique fixed point. This fixed point defines a canonical section of $E_\rho|_{\partial M}$ over $\partial M$. The obstruction of extending this section over $M$ is measured by the relative Euler class, which we also denote as $e_\rho \in H^2(M,\partial M;\bbZ)$.  

We can describe the relative Euler class $e_\rho$ in terms of the homomorphism $\rho$ as follows. Since $M$ is the complement of a knot in a rational homology 3-sphere, $H_2(M,\partial M;\bbZ)$ is generated by $[F]$ where $F$ is a Seifert surface of $M$. The class $e_\rho$ is completely determined by $e_\rho([F])$. The group $H^2(M;\bbZ)$ vanishes, so we get a lift $\widetilde{\rho}:\pi_1(M) \to \widetilde{\PSL}_2(\bbR)$ of $\rho$. This lift determines an image $\widetilde{\rho}(\partial F)$ in $\widetilde{\PSL}_2(\bbR)$. The following lemma appeared in \cite[Section 2.5]{G88}.

\begin{lem}
\label{lem:LiftingLongitudeIsWellDefined}
The element $\widetilde{\rho}(\partial F)$ is independent of the choices of lifts of $\rho$ and the choices of Seifert surface representing the generator of $H_2(M,\partial M;\bbZ)$.
\end{lem}

\begin{proof}
Since $F$ is a Seifert surface, the boundary $\partial F$ is identified with a well-defined element of $\pi_1(\partial M)$ which is the generator of $\ker (\pi_1(\partial M) \to H_1(M;\bbZ))$. Lifts of $\rho$ are parametrized by $H^1(M;\bbZ)$. In particular given a lift $\widetilde{\rho}$ of $\rho$ and a element $\phi \in H^1(M;\bbZ)$, we obtain a different lift $\widetilde{\rho}^{\phi}:\pi(M) \to \widetilde{\PSL}_2(\bbR)$ by
\[
\widetilde{\rho}^{\phi}(\gamma) = \widetilde{\rho}(\gamma) c^{\phi(\gamma)}
\]
where $c$ is a generator of the center of $\widetilde{\PSL}_2(\bbR)$. The element $\partial F$ is in the commutator subgroup of $\pi_1(M)$, so $\partial F \in \ker (\phi)$ for all $\phi \in H^1(M;\bbZ)$. Therefore, the image of $\partial F$ is independent of the choice of lift of $\rho$.  
\end{proof}

The canonical section of $E_\rho|_{\partial M}$  determines a section of $\pi_1(\partial M)$ 
\begin{equation}
\label{eq:CanSection}
    s: \pi_1(\partial M) \to \widetilde{\PSL}_2(\bbR)
\end{equation}
as follows. Let us identify $\bbR \bbP^1 \cong \bbR / \pi \bbZ$. The image $\rho(\pi_1(\partial M))$ is parabolic and fixes a unique point $p \in \mathbb{RP}^1$ which has a unique lift $\widetilde{p}$ in the interval $[0,\pi)$. The canonical section $s:\pi_1(\partial M)\to \widetilde{\PSL}_2\bbR$ is obtained by lifting $\rho(\pi_1(\partial M))$ to elements fixing $\widetilde{p}$. Since $s(\ell)$ and $\widetilde{\rho}(\ell)$ have the same image in $\PSL_2(\bbR)$, it follows that $\widetilde{\rho}(\ell)=s(\ell)c^n$ for some $n \in\bbZ$. The integer $n$ is $e_\rho([F])$. 

Another norm that we have on $H_2(M,\partial M;\bbR)$ is the Thurston norm. For an irreducible and atoroidal manifold $M$ with boundary, Thurston introduced a norm on $H_2(M,\partial M;\mathbb{R})$ in \cite{T86}. Given a homology class $[S]$ in $H_2(M,\partial M;\bbZ)$, the Thurston norm of $[S]$ is defined to be
\[
||[S]|| = \inf\{ -\chi(F) \mid F \text{ represents [S]} \}
\]
where $F$ contains no sphere components. The function $||\cdot ||$ is extended to each ray containing an integral point by linearity. Finally, $||\cdot||$ is extended continuously to $H_2(M,\partial M;\mathbb{R})$ by convexity. 

\subsubsection{Obstructing totally geodesic surfaces using Euler class}

Using the Euler class and Thurston norm on $H_2(M,\partial M;\bbZ)$, Calegari produced an obstruction to the existence of totally geodesic surfaces in fibered knot complements in a rational homology sphere \cite[Corollary 4.6]{C06}. When $M$ is a fibered knot complement, Calegari showed that for every Galois conjugate of the hyperbolic representation into $\PSL_2(\bbR)$, \[e_\rho([F]) < \Vert[F]\Vert,\] where $F$ is the fiber surface and $\Vert \cdot \Vert$ is the Thurston norm on $H_2(M,\partial M;\bbZ)$ \cite[Remark 3.3]{C06}. Under some assumption on the trace field of the knot $K$, the inequality above rules out the existence of totally geodesic surfaces. In practice, we just need the inequality to hold at one real place.

Inspired by this idea, we modify Calegari's condition to produce an obstruction to the existence of totally geodesic surfaces in non-fibered knot complements given in Theorem \ref{thm:ModifiedCalegari}. Before proving Theorem \ref{thm:ModifiedCalegari}, we need the following:

\begin{thm}[{\cite[Theorem 4.4]{C06}}]
\label{thm:NoSeparatingTGS}
Let $M$ be a cusped hyperbolic 3-manifold, and suppose $S \subset M$ is a totally geodesic surface with rational traces (possibly immersed). If S is not (Gromov or Thurston) norm minimizing
in its homology class, the trace field $K$ has no real places. In particular, if $K$ has a real place then $M$ does not contain any null-homologous totally geodesic surface with rational traces. 
\end{thm}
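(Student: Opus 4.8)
\medskip
\noindent\textbf{Proof proposal for \cref{thm:NoSeparatingTGS}.}
The plan is to prove the contrapositive: assuming the trace field $K$ has a real place $\sigma\colon K\hookrightarrow\bbR$, I will show that any totally geodesic surface $S\subset M$ with rational traces must be norm minimizing in its homology class. The ``in particular'' statement then follows for free, since the zero class has vanishing Thurston norm while $\chi_-(S)>0$ for every totally geodesic surface in a finite-volume hyperbolic $3$-manifold (no component can be a sphere, disk, torus, or annulus). The three ingredients are: (i) the $\sigma$-Galois conjugate of the geometric representation genuinely lands in $\PSL_2(\bbR)$; (ii) its restriction to $\pi_1(S)$ is the Fuchsian uniformization of $S$; and (iii) the tension between the Milnor--Wood bound on the (bounded) Euler class and Gabai's identification of the singular norm with the Thurston norm.

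First I would construct the representation. Conjugate the geometric representation $\rho_0$ so that all its traces lie in $K$, and let $A$ be the associated quaternion algebra over $K$; the completion of $A$ at $\sigma$ is either $M_2(\bbR)$ or the division algebra of Hamilton quaternions. Because $S$ has rational traces and $\sigma$ fixes $\bbQ$ pointwise, the traces of $\rho_0(\pi_1(S))$ are $\sigma$-invariant; and $\pi_1(S)$, being Fuchsian, contains hyperbolic elements $g$ with $|\tr\rho_0(g)|>2$. Since every norm-one Hamilton quaternion acts, modulo $\pm1$, as a rotation of $\bbR^3$ and hence has reduced trace in $[-2,2]$, the $\sigma$-completion of $A$ cannot be the division algebra; so $\sigma$ is unramified in $A$ and we obtain $\rho^\sigma\colon\pi_1(M)\to\PSL_2(\bbR)$. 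The peripheral elements have trace $\pm2$, fixed by $\sigma$, so $\rho^\sigma$ is boundary parabolic and carries a relative Euler class $e_{\rho^\sigma}\in H^2(M,\partial M;\bbZ)$ built exactly as in \cref{subsec:EulerClass}.

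Next I would identify $\rho^\sigma$ on $\pi_1(S)$. The character of $\rho_0|_{\pi_1(S)}$ is $\bbQ$-valued and irreducible (a Fuchsian surface group is non-elementary), hence fixed by $\sigma$; so $\rho^\sigma|_{\pi_1(S)}$ is conjugate to $\rho_0|_{\pi_1(S)}$, i.e.\ to the discrete faithful representation uniformizing $S$. The pullback of $E_{\rho^\sigma}$ to $S$ is then the tautological flat $\mathbb{RP}^1$-bundle of the hyperbolic structure on $S$, whose relative Euler number equals $\chi(S)$ --- the sharp case of the relative Milnor--Wood inequality. By naturality of the relative Euler class, $\langle e_{\rho^\sigma},\iota_*[S,\partial S]\rangle=\chi(S)$, where $\iota\colon S\to M$ is the immersion, so this pairing has absolute value $\chi_-(S)$. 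On the other hand the Milnor--Wood inequality gives $|\langle e_{\rho^\sigma},\beta\rangle|\le\|\beta\|_{\mathrm{Th}}$ for every $\beta\in H_2(M,\partial M;\bbR)$, using that the bounded Euler class of $\PSL_2(\bbR)$ has norm $\tfrac12$ and that the Gromov norm is twice the Thurston norm by Gabai's theorem; the same theorem shows $\|\iota_*[S,\partial S]\|_{\mathrm{Th}}\le\chi_-(S)$ since $S$ represents the class. Chaining these, $\chi_-(S)\le\|\iota_*[S,\partial S]\|_{\mathrm{Th}}\le\chi_-(S)$, so equality holds and $S$ is norm minimizing; if $\iota_*[S,\partial S]=0$ we instead get $\chi_-(S)=0$, a contradiction.

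I expect the main obstacle to be the bookkeeping in the relative, cusped setting: checking that $\rho^\sigma$ is boundary parabolic so that $e_{\rho^\sigma}$ is well defined, that the relative Milnor--Wood inequality is attained \emph{exactly} by the uniformizing representation of a cusped hyperbolic surface (so the pairing is precisely $\chi(S)$, not merely bounded by it), and that the relative Gromov norm of a surface class is twice its Thurston norm in the non-closed case --- the step where Gabai's theorem enters. The quaternion-algebra step is brief but conceptually central: it is exactly the presence of a totally geodesic surface with rational traces that forbids ramification of $A$ at the real place $\sigma$, which is why the ``rational traces'' hypothesis is indispensable to the statement.
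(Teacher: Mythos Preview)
The paper does not prove this statement; it is quoted from \cite[Theorem~4.4]{C06} and used as a black box in the proof of \cref{thm:ModifiedCalegari}. There is no in-paper argument to compare against.

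Your outline is essentially Calegari's own proof, and it is correct. Two minor remarks. First, since $M$ is cusped, the quaternion algebra of $\pi_1(M)$ is already $M_2(K)$ (the parabolics force it to split), so the ramification argument in step~(i), while not wrong, is unnecessary here; the rational-traces hypothesis really earns its keep in step~(ii), ensuring that $\rho^\sigma|_{\pi_1(S)}$ remains conjugate to the Fuchsian uniformization of $S$. Second, in the final inequality chain, the bound $\|\iota_*[S,\partial S]\|_{\mathrm{Th}}\le\chi_-(S)$ is not immediate when $S$ is only immersed, since the Thurston norm is defined via embedded representatives; the clean route is exactly the one you hint at---pass through the Gromov norm (an immersed $S$ furnishes a singular cycle, giving $\|\iota_*[S,\partial S]\|_{\mathrm{Gromov}}\le 2\chi_-(S)$) and then invoke Gabai's theorem equating the two norms.
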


\begin{proof}[Proof of Theorem \ref{thm:ModifiedCalegari}]
For a contradiction suppose that $S$ is a totally geodesic surface in $M$. Since the trace field $K$ contains no proper real subfield besides $\bbQ$ and contains no quadratic subfield, $S$ is orientable and has rational traces. Since $\rho$ is a Galois conjugate of the geometric representation, $\rho(S)$ remains a Fuchsian subgroup of finite coarea in $\PSL_2(\bbR)$. Consequently, $|e_\rho([S])| = -\chi(S)$. 

Let $F \subset M$ be a Seifert surface of minimal genus. Since $M$ is a knot complement, $H_2(M,\partial M;\bbZ)$ is generated by $[F]$ and $[S] = n[F]$ for some $n\in\bbZ$. Note that since $K$ has a real place, $n\neq 0$ by Theorem \ref{thm:NoSeparatingTGS}. Since $F$ is a minimal genus Seifert surface, it is Thurston norm minimizing, and therefore $-|n|\chi(F)\leq -\chi(S)$. By assumption $|e_\rho([F])| < 2g-1$, we then have
\[
-\chi(S) = |e_\rho([S])| = |n|\cdot| e_\rho([F])| < |n|\cdot(2g-1) = -|n|\chi(F)\leq -\chi(S),
\]
which is the desired contradiction. 
\end{proof}

\begin{remark}
As we shall see in Section \ref{sec:SeifertSurfacesEulerClassObstructions}, the inequality in Theorem \ref{thm:ModifiedCalegari} holds for 
more than half of the hyperbolic knots with fewer than nine crossings. 
\end{remark}


\section{The knot $7_4$}
\label{sec:74}

Throughout this section, we let $J$ be the knot $7_4$ in Figure \ref{fig:7_4_diagram}, $M$ the complement of $J$ in $S^3$, and $\Gamma$ the fundamental group of $M$. 

\begin{figure}[H]
\centering
\includegraphics[width=1in,angle=90]{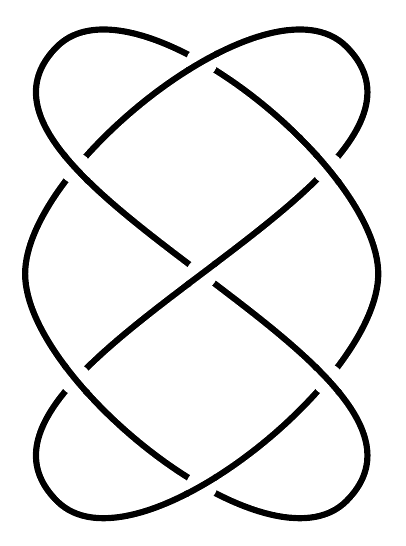}
\caption{The knot $7_4$}
\label{fig:7_4_diagram}
\end{figure}

\subsection{Trace field and totally geodesic surface}
The knot $J$ is a two-bridge knot that corresponds to the fraction $15/11$. Following \cite[Section 4.5]{MR03}, the knot group $\Gamma$ has the following presentation.
\begin{equation}
\label{eq:7_4_GroupPresentation}
\Gamma = \langle x,y \mid xw = wy \rangle,
\end{equation}
where $w = yx^{-1}yxy^{-1}xy^{-1}x^{-1}yx^{-1}yxy^{-1}x$. The homological longitude of $J$ is given by $\ell = wvx^{-4}$ where $v$ is the word $w$ spelled backwards.

The manifold $M$ admits a hyperbolic structure with the discrete and faithful representation $\rho:\Gamma \to \PSL_2(\bbC)$ given by
\begin{equation}
\label{eq:7_4_Holonomy}
    \rho(x) = \begin{pmatrix} 1 & 1 \\ 0 & 1\end{pmatrix} \quad \text{and} \quad \rho(y) = \begin{pmatrix} 1 & 0 \\ z & 1\end{pmatrix}
\end{equation}
where $z$ is a complex root of the polynomial $\Lambda =z^3 - 4z^2 + 4z + 1$. Note that the group relation in \eqref{eq:7_4_GroupPresentation} holds if and only if $z$ satisfies a polynomial of degree 7. This polynomial factors into two factors of degree 3 and 4. A quick check using SnapPy \cite{SnapPy} tells us that the hyperbolic structure corresponds to the complex root of the cubic factor. 

Since the representation $\rho$ is faithful, we can identify $\Gamma$ with its image under $\rho$. In this identification, we can calculate $\ell$ to be
\begin{equation*}
    \ell
    = \begin{pmatrix}
    -1 & 2 (2z^2 - 6z + 5) \\ 0 & -1
    \end{pmatrix}.
\end{equation*}
If we set $\tau = -2 (2z^2 - 6z + 5)$, then $\tau+2 = -4 (z-1)(z+2)$.

We observe that the trace field of $\Gamma$ is $\bbQ(z)$ which is a cubic extension over $\bbQ$. Since $z$ is an algebraic integer, \eqref{eq:7_4_Holonomy} implies that $\Gamma$ has integral traces. By Proposition \ref{prop:NoClosedTG}, $\Gamma$ does not contain cocompact Fuchsian groups and only contains non-cocompact Fuchsian subgroup commensurable to $\PSL_2(\bbZ)$. In fact, we prove that $M$ contains a totally geodesic twice-punctured torus. 

\begin{prop}
\label{prop:7_4_TGS_TwicePuncturedTorus}
The manifold $M$ contains a totally geodesic twice-punctured torus. The fundamental group of this twice-punctured torus is generated by
\begin{equation}
    \begin{array}{r@{~}l}
    \Delta 
    & = \left\langle 
    \begin{pmatrix} -1 & 4(z-1)(z-2) \\ 0 & -1 \end{pmatrix}, 
    \begin{pmatrix} 5 & 3(z-1)(z-2) \\ z^2-z-1 & -1 \end{pmatrix}, \right.
    \\
    & \hspace{2cm} \left.
    \begin{pmatrix} 7 & 11(z-1)(z-2) \\ z^2-z-1 & -3 \end{pmatrix}
    \right\rangle
    \end{array}
\end{equation}
which come from the words
\begin{align*}
a &= \begin{pmatrix} -1 & 4(z-1)(z-2) \\ 0 & -1 \end{pmatrix}
 = x^2 \ell
\\
b &= \begin{pmatrix} 5 & 3(z-1)(z-2) \\ z^2-z-1 & -1 \end{pmatrix}
 = wy^{-1}xy^{-1}xy^{-1}
\\ 
c &= \begin{pmatrix} 7 & 11(z-1)(z-2) \\ z^2-z-1 & -3 \end{pmatrix}
 = x^{-1}wxy^{-1}xw^{-1}x^2w^{-1}x
\end{align*}
The boundary slopes of the two cusps are $\pm 2$.
\end{prop}

\begin{proof}
We first observe that $\Delta \subset \Gamma$ since the generators can be expressed as words in $\Gamma$.
Furthermore, $\Delta$ is conjugate to the subgroup 
\begin{equation*}
    \Delta' = \left\langle 
    \begin{pmatrix}
    -1 & -4 \\ 0 & -1
    \end{pmatrix},
    \begin{pmatrix}
    5 & 3 \\ -2 & -1
    \end{pmatrix},
    \begin{pmatrix}
    3 & 11 \\ -2 & -7
    \end{pmatrix}
    \right\rangle
\end{equation*}
of $\PSL_2(\bbZ)$ via 
\begin{equation*}
    \gamma'
    = \begin{pmatrix}
    (z^2-3z+2)^{-1/2} & 0 \\ 0 & (z^2-3z+2)^{1/2}
    \end{pmatrix}
\end{equation*}
It follows that $\Delta$ stabilizes some hyperplane $H$ in $\bbH^3$. Since $\bbH^2/\Delta'$ is a finite area twice-punctured torus, the stabilizer of $H$ in $\Gamma$ must act with finite coarea on $H$. Therefore, $M$ contains a totally geodesic surface $S$. 

We now show that this surface is the twice-punctured torus. The trace field of $\Gamma$ has odd degree over $\bbQ$, so $\Gamma$ does not contain any element with purely imaginary trace. Thus $H$ covers an orientable totally geodesic surface in $M$ (see Remark \ref{rem:nonori}). Since the Euler characteristic of the twice-punctured torus is $-2$, the hyperplane $H$ must cover either a once- or a twice-punctured torus in $M$. Consider the element 
\begin{equation}
    d=\begin{pmatrix}
    -1 & 0 \\ 4(z^2-z-1) & -1
    \end{pmatrix} = a^{-1}cb^{-1}c^{-1}b \in \Delta
\end{equation}
Since $wdw^{-1} = x^{-2}\ell$ and $a = x^2\ell$, we have that $a$ and $d$ are not conjugate in $\rho(\Gamma)$. This implies that $a$ and $d$ are not conjugate in $\Delta$. Therefore, $S$ has at least two cusps. It follows that the surface $S$ has to be the twice-punctured torus itself.

Since $a = x^2 \ell$ and $d = w^{-1} x^{-2} \ell w$, the boundary slopes of the two cusps of $S$ are $\pm 2$.
\end{proof}

\begin{remark}
\label{rem:MatricesOfTGS_7_4}
Since $\Delta$ is a conjugate of $\Delta'$ by $\gamma'$, elements in $\Delta$ have the form
\begin{equation*}
    \begin{pmatrix}
    \alpha & \beta(z-1)(z-2) \\ \delta(z^2 - z -1) & \eta  
    \end{pmatrix}
\end{equation*}
where $\alpha,\beta,\delta,$ and $\eta \in \bbZ$.
\end{remark}

\subsection{Boundary slope restrictions}

Since the trace field of $\Gamma$ contains no proper subfield other than $\bbQ$ and $\Gamma$ has integral traces, we apply the trace condition to obtain restrictions for the boundary slopes of totally geodesic surfaces in $M$. 

We first make a few preliminary observations and set some notation. Keeping the notation in Proposition \ref{prop:7_4_TGS_TwicePuncturedTorus}, we denote by $H$ the vertical hyperplane in $\bbH^3$ stabilized by $\Delta$ and by $S$ the totally geodesic twice-punctured torus in $M$ covered by $H$. Since $x^2\ell \in \Delta$, the boundary slope at infinity of $H$ is 2. 
The action of $\Delta$ on $\partial_\infty H$ has two orbits of cusp points. The boundary slopes at $0$ and $\infty$ of $H$ are $-2$ and $2$, respectively, so the two orbits of cusp points are the orbits of $0$ and $\infty$ under $\Delta$. The group $w\Delta w^{-1}$ stabilizes $w(H)$. Since $x^{-2}\ell = wdw^{-1} \in w\Delta w^{-1}$, the image $w(H)$ is a vertical hyperplane with boundary slope $-2$ at infinity.

\begin{lem}
\label{lem:7-4_bdry}
The complete set of boundary slopes for a cusped totally geodesic surface in $M$ is $\{\pm 2\}$.
\end{lem}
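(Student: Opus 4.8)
The plan is to anchor everything on the twice-punctured torus $S$ from \cref{prop:7_4_TGS_TwicePuncturedTorus}, whose two cusps have slopes $2$ and $-2$. Since these values already realise the asserted set, it suffices to show that if $\Sigma$ is a cusped totally geodesic surface in $M$ and one of its cusps has slope $s$, then $s\in\{\pm2\}$. So I would assume for contradiction that $s\notin\{\pm2\}$; in particular $\Sigma\neq S$, since the cusps of $S$ have slopes $\pm2$.

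First I would produce, essentially for free, a cusp-to-cusp geodesic that $\Sigma$ and $S$ share. On the cusp torus $T$, a curve of slope $s$ has nonzero intersection number with curves of slope $2$ and of slope $-2$ (here one uses that $\bbQ(z)$ has no real subfield besides $\bbQ$, so slope-$s$ and slope-$(\pm2)$ curves are genuinely non-parallel); hence, inside a small horoball neighbourhood of the cusp, the half-open annulus that $\Sigma$ cuts out along this cusp meets the two annuli cut out by $S$. By \cref{lem:IntersectionsOfTGS} each component of $\Sigma\cap S$ is a closed or a cusp-to-cusp geodesic, and one entering the cusp cannot be closed (a lift of a closed geodesic is the axis of a hyperbolic element of $\Gamma$, whose ideal endpoints are not cusp points). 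Thus $\Sigma\cap S$ contains a cusp-to-cusp geodesic $g$ with an end in the slope-$s$ cusp of $\Sigma$.

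Next I would lift $g$ so that this end sits at $\infty$. This produces a lift $\widetilde\Sigma$ of $\Sigma$ of slope $s$ at $\infty$ and a lift $\widetilde S$ of $S$ of slope $\sigma_1\in\{2,-2\}$ at $\infty$, sharing the other ideal endpoint $c\in\bbC$ of the lift of $g$, which is a cusp point; at $c$, $\widetilde\Sigma$ carries a cusp of $\Sigma$ of some slope $s'=p'/q'$ and $\widetilde S$ carries a cusp of $S$ of slope $\sigma_2\in\{2,-2\}$. Choosing $\gamma\in\Gamma$ with $\gamma(c)=\infty$, its lower-left entry $\delta$ is nonzero, and the computation of \cref{subsec:BoundarySlopeTraceCondition} (equivalently \cref{eq:TraceCondition}) applied to the two lifts $\widetilde S,\gamma\widetilde S$ of $S$ gives $\delta^2(\sigma_1+\tau)(\sigma_2+\tau)\in\bbZ$, while applied to the two lifts $\widetilde\Sigma,\gamma\widetilde\Sigma$ of $\Sigma$ it gives $\delta^2(p+q\tau)(p'+q'\tau)\in\bbZ$, where $s=p/q$ in lowest terms. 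The crucial point is that $\delta$ is the same element in both relations.

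Finally, since $\bbQ(z)$ is cubic over $\bbQ$ and hence has no quadratic subfield, the first relation forces $\delta^2$ to be a rational multiple of $(2+\tau)^{-2}$, of $(-2+\tau)^{-2}$, or of $(\tau^2-4)^{-1}$, according as $(\sigma_1,\sigma_2)$ is $(2,2)$, $(-2,-2)$, or $\{2,-2\}$; using $\tau=-4z^2+12z-10$, together with the elementary consequences $\tau^2-4=32(z-2)^2$ and $(z-2)^2=-1/z$ of the relation $z^3=4z^2-4z-1$, I would substitute into the second relation and reduce using $z^3=4z^2-4z-1$. Demanding that the coefficients of $z$ and $z^2$ vanish then yields, in the three cases, $ss'=4$ with $s+s'=4$, or $ss'=4$ with $s+s'=-4$, or $ss'=-4$ with $s+s'=0$; in every case $s,s'\in\{2,-2\}$, contradicting $s\notin\{\pm2\}$. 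Hence the set of all cusp slopes of cusped totally geodesic surfaces in $M$ is contained in $\{\pm2\}$, and since $S$ realises both, it equals $\{\pm2\}$. The main obstacle here is conceptual rather than computational: the trace condition is vacuous for a surface all of whose cusps share one slope (in particular, for any once-punctured surface), so the key move is to use \cref{lem:IntersectionsOfTGS} to manufacture a cusp-to-cusp geodesic of $\Sigma$ lying inside $S$, which lets a single conjugating element carry the known slopes $\pm2$ of $S$ into the trace relation for $\Sigma$; once that is set up, one only needs the patience to run all three sub-cases for $(\sigma_1,\sigma_2)$.
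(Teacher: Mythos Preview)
Your argument is correct and is a genuine alternative to the paper's proof. Let me contrast the two.

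The paper fixes the vertical lift $H$ of $S$ with slope $2$ at $\infty$, intersects it with a lift $\widetilde\Sigma$ of slope $p/q\neq 2$, and then uses the explicit description of $\Delta$ from \cref{rem:MatricesOfTGS_7_4} to pin down the exact shape of the lower-left entry of the carrying element $\gamma$ (as $\delta(z^2-z-1)$ or $\delta(z^2-2z)$, according to which $\Delta$-orbit the intersection cusp lies in). The trace condition is then expanded in the basis $1,z,z^2$ and solved case by case; a second pass with the slope-$(-2)$ lift $w(H)$ is needed to finish. The argument is heavier computationally and leans on the specific matrix form of $\Stab(H)$.

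Your route bypasses the explicit matrix structure of $\Delta$ entirely. By arranging the cusp-to-cusp geodesic to lie in both $S$ and $\Sigma$, you get the same $\delta$ in two instances of \cref{eq:TraceCondition}: one for $S$ (which, because $\sigma_1,\sigma_2\in\{\pm2\}$ are known, determines $\delta^2$ up to a nonzero rational), and one for $\Sigma$. Since $1,\tau,\tau^2$ are $\bbQ$-independent (as $\bbQ(\tau)=\bbQ(z)$ is cubic), equating coefficients is immediate and gives exactly the three quadratic constraints on $(s,s')$ that you list. This is cleaner, needs only one pass, and would transfer verbatim to any knot complement satisfying the trace hypotheses once a single totally geodesic surface with known slopes is in hand. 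A minor simplification: you do not need the identities $\tau^2-4=32(z-2)^2$ and $(z-2)^2=-1/z$ at all, since the whole computation can be done in the $\tau$-basis without ever passing to $z$.
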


\begin{proof}
Let $\Sigma$ be a totally geodesic surface admitting a non-zero boundary slope $p/q$. There is a vertical lift $\widetilde{\Sigma}$ of $\Sigma$ to $\bbH^3$ that contains $\infty$ as a cusp point with boundary slope $p/q$. Suppose that $p/q \neq 2$. This lifts intersects $H$ along a vertical cusp-to-cusp geodesic $(\theta,\infty)$.  Since $\theta$ is a cusp point of $H$, there exists $\gamma \in \Gamma$ such that $\gamma(\infty) = \theta$. In particular, we either have $\theta$ is in the orbit of $0$ or $\infty$ for the action of $\Delta$ on $H$. We consider two cases.

\begin{description}
\item[Case 1] Suppose that $\theta \in \Delta \cdot \{\infty\}$. We can choose $\gamma \in \Delta$ and may assume that
\begin{equation*}
    \gamma = \begin{pmatrix}
    \alpha & \beta (z-1)(z-2) \\
    \delta (z^2-z-1) & \eta
    \end{pmatrix}
\end{equation*}
where $\alpha,\beta,\delta$ and $\eta$ are integers (see Remark \ref{rem:MatricesOfTGS_7_4}). Since $\theta \neq \infty$, we may assume that $\delta \neq 0$. Since $\theta$ is a cusp point of $\widetilde{\Sigma}$, the elements $x^p\ell^q,\gamma x^m\ell^n\gamma^{-1}$ are contained in $\Stab_\Gamma(\widetilde{\Sigma})$. Applying the trace condition, we must have
\begin{align*}
    \tr(x^p\ell^q \gamma x^m \ell^n \gamma^{-1}) &\in \bbZ
\end{align*}
Writing the above expression as an element of $\bbZ[z]$, we see that the trace condition holds if and only if the coefficients of $z$ and $z^2$ are zero; that is, the trace condition is equivalent to
\begin{equation*}
(7mp-6np-6mq-4nq)\delta^2 = 0 = (-3mp-2np-2mq+20nq)\delta^2
\end{equation*}
Since $\delta\neq0$, the only solutions to the above system of equations are $m=n=0$ or $p=q=0$ or $m-2n=p-2q=0$. Since $(m,n) \neq (0,0) \neq (p,q)$, we must have $m/n =p/q = 2$. This contradicts the assumption that $p/q \neq 2$. 

\item[Case 2] Suppose that $\theta \in \Delta \cdot \{0\}$. We can choose $\gamma = \widetilde{\gamma} w^{-1}$ for some $\widetilde{\gamma} \in \Delta $ such that $\widetilde{\gamma}(0) =\theta$. Therefore,
\begin{equation*}
    \gamma = \begin{pmatrix}
    * & * \\
    \delta(z^2-2z) & * \\ 
    \end{pmatrix}
\end{equation*}
Applying the trace condition, we  have
\begin{equation*}
    \delta^2(mp+6np+6mq+4nq)=-4\delta^2(np+mq) = 0
\end{equation*}
Since $\delta \neq 0$ and $(m,n) \neq (0,0) \neq (p,q)$, we must have $m/n =-p/q = 2$ and $m/n = -p/q=-2$. Since $p/q \neq 2$, we must have $m/n =-p/q = 2$. 
\end{description}

Now we suppose that the boundary slope at infinity of $\widetilde{\Sigma}$ is 2. Recall that $w(H)$ is a vertical hyperplane with boundary slope $-2$ at infinity. By Lemma \ref{lem:IntersectionsOfTGS}, the two hyperplanes $\widetilde{\Sigma}$ and $w(H)$ intersect along a cusp-to-cusp geodesic $(\theta,\infty)$. As before, we have two cases.
\begin{description}
\item[Case 1] Suppose that $\theta \in w \Delta \cdot\{\infty\}$. Then there exists $\widetilde{\gamma} \in \Delta$ such that $\theta = w \widetilde{\gamma} (\infty)$. Putting $\gamma =w \widetilde{\gamma} $, we have
\begin{equation*}
    \gamma = \begin{pmatrix}
    * & * \\ \delta(z^2-2z) & * 
    \end{pmatrix}.
\end{equation*}
Since $\gamma(\infty) = \theta$, we have $\gamma x^m \ell^n \gamma^{-1} \in \Stab_{\Gamma}(w(H))$ for some $(m,n) \neq (0,0)$. Applying the trace condition, we have
\begin{equation*}
    tr(x^{2}\ell \gamma x^m \ell^n \gamma^{-1}) \in \bbZ
\end{equation*}
This is equivalent to
\begin{equation*}
    \delta^2 (m+2n) = 0
\end{equation*}
Since $\delta \neq 0$ because $\theta \neq \infty$, we conclude $m/n = -2$. 

\item[Case 2] Suppose that $\theta \in w \Delta \cdot\{0\}$. Then there exists $\widetilde{\gamma} \in \Delta$ such that $\theta = w \widetilde{\gamma}(0)$. Putting $\gamma = w\widetilde{\gamma} w^{-1}$, we have $\gamma(\infty) = \theta$ and furthermore
\begin{equation*}
    \gamma = \begin{pmatrix}
    * & * \\ \delta(z^2-2z-1) & * 
    \end{pmatrix}
\end{equation*}
Applying the trace condition, we have 
\begin{equation*}
    \delta^2 n = \delta^2(m+18n) = 0
\end{equation*}
Similar to before, $\delta \neq 0$ because $\theta \neq \infty$, so we conclude $m=n=0$, which contradicts the assumption that $(m,n)\neq (0,0)$. 
\end{description}
In all cases, we must have both boundary slopes $\pm 2$. Therefore, the complete set of boundary slopes of totally geodesic surfaces in $M$ is $\{\pm 2\}$. 
\end{proof}

\subsection{Uniqueness of the totally geodesic surface}

The method for proving uniqueness is to show that there is no vertical hyperplane of boundary slope $2$ between $H$ and $x(H)$ that is a lift of a totally geodesic surface. In particular, we will argue that any vertical hyperplane of boundary slope $2$ between $H$ and $x(H)$ does not intersect hemispherical lifts of the twice-punctured torus $S$ along closed nor cusp-to-cusp geodesics, which contradicts Lemma \ref{lem:IntersectionsOfTGS}.

\begin{figure}[H]
    \centering
    \includegraphics[scale=0.7,angle=90]{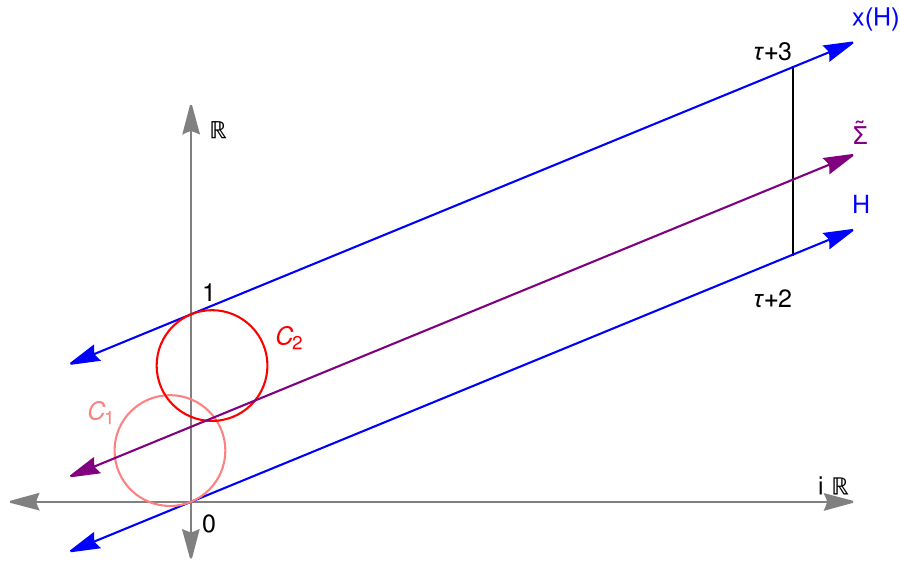}
    \caption{Possible lift of totally geodesic surface with boundary slope $2$ at $\infty$}
    \label{fig:7-4_LiftsOfT}
\end{figure}

\begin{proof}[Proof of Theorem \ref{t:7-4!surf}]
Let $\Sigma$ be a totally geodesic surface in $M$ distinct from the twice-punctured torus $S$.
Let $\widetilde{\Sigma}$ be a vertical hyperplane lift with boundary slope $2$ at $\infty$; such a lift must exist because of Lemma \ref{lem:7-4_bdry}.  Without loss of generality, we may assume that $\partial_{\infty} \widetilde{\Sigma} \cap \partial_{\infty} \bbH_{\bbR}^2 \in (0,1)$ by translation by $x$.

We now describe some hemispherical lifts of $S$ to $\bbH^3$. A visual reference is given in Figure \ref{fig:7-4_LiftsOfT}. Consider two lifts of $S$ defined as $C_1 = y(H)$ and $C_2 = xy^{-1}(H)$. Since $y$ is a parabolic element fixing $0 \in \partial_\infty H$, the boundaries $\partial_\infty C_1$ and $\partial_\infty y^{-1}(H)$ are circles tangent to $\partial_\infty H$ at $0$. Applying the isometry $x$, we see that $\partial_\infty C_2$ is a circle tangent to $x(H)$ at $1$. Finally, we show that $C_1 \cap C_2$ contains two points. Since
\begin{equation*}
    y^{-1}xy^{-1}\left(\frac{\tau +2}{4}\right) = \infty,
    \hspace{4mm}
    y^{-1}xy^{-1}(\infty) = -\frac{\tau+2}{4}, 
    \hspace{4mm}
    y^{-1}xy^{-1}(0) = -\frac{1}{8}\tau-\frac{3}{4},
\end{equation*}
$y^{-1}xy^{-1}(H)$ is a vertical hyperplane with boundary slope $-2$ at $\infty$. Therefore, $y^{-1}xy^{-1}(H)$ intersects $H$ at two points. Applying the isometry $y$, we see that $C_1$ and $C_2$ must also intersect at two points.  

Since $C_1$ and $C_2$ have a nonempty intersection, the vertical hyperplane $\widetilde{\Sigma}$ is not tangent to both $C_1$ and $C_2$ and hence must intersect either $C_1$ or $C_2$ along a geodesic. Let $\tilde\theta$ be a geodesic in the intersections $\widetilde{\Sigma}\cap C_1$ and $\widetilde{\Sigma}\cap C_2$. By Lemma \ref{lem:IntersectionsOfTGS}, $\tilde\theta$ is a lift of either a closed geodesic or a cusp-to-cusp geodesic.  Since the stabilizer of $H$ is conjugate to $\Delta'$ by
\begin{equation*}
    \gamma'
    = \begin{pmatrix}
    (z^2-3z+2)^{-1/2} & 0 \\ 0 & (z^2-3z+2)^{1/2}
    \end{pmatrix}
\end{equation*}
the endpoints $\theta_i$ of $\tilde\theta$ are the image of quadratic irrationals (resp.~rationals) $\sigma_i$ under
\begin{equation*}
    \gamma
    \begin{pmatrix}
    (z^2-3z+2)^{1/2} & 0 \\ 0 & (z^2-3z+2)^{-1/2}
    \end{pmatrix}
\end{equation*}
when $\theta$ is the lift of a closed (resp.~cusp-to-cusp) geodesic for $\gamma \in \{y,xy^{-1}\}$.
Since $\theta_i \in \partial_{\infty} \widetilde{\Sigma}$ and $\widetilde{\Sigma}$ is a vertical hyperplane with boundary slope $2$ at $\infty$, we must have
\begin{equation*}
    \theta_1 - \theta_2 \in \bbR(\tau+2).
\end{equation*}
We recall that $\tau+2 = -4 (z-1)(z-2)$.  Moreover, we note that, for both the cases of the closed and the cusp-to-cusp geodesics, we have $\sigma_1 \sigma_2, \sigma_1 + \sigma_2 \in \bbQ$.
Now we consider the cases of $\gamma = y$ and $\gamma=xy^{-1}$.

\begin{description}
\item[Case 1] Suppose that $\gamma = y$.  Then
\begin{align*}
    \dfrac{\theta_1 - \theta_2}{\tau+2}
    & = \dfrac{1}{-4 (z-1)(z-2)} 
        \cdot \left( y \, \sigma_1 (z-1)(z-2) - y( \, \sigma_2 (z-1)(z-2) \right)
    \\
    & = \dfrac{\sigma_1-\sigma_2}{
        -4 \left( 1 + \sigma_1 z (z-1)(z-2) \right) 
        \left( 1 + \sigma_2 z (z-1)(z-2) \right)
        }
    \in \bbR
\end{align*}
Because $\sigma_1,\sigma_2 \in \bbR$, it is sufficient to verify that the denominator is real; that is, we must have
    \begin{align*}
    \hspace{10mm}
    ( 1 & + \sigma_1  z (z-1)(z-2) ) 
        ( 1 + \sigma_2 z (z-1)(z-2))\\
    & = (\sigma_1 + \sigma_2 - 2 \sigma_1 \sigma_2) z^2
    + (3 \sigma_1 \sigma_2 - 2 (\sigma_1 + \sigma_2)) z
    \\
    & \qquad
    + (1 - \sigma_1 - \sigma_2 + \sigma_1 \sigma_2)
      \in \bbR \cap \bbQ(z) = \bbQ
    \end{align*}
Because $\bbQ(z)$ is cubic, $\sigma_1$ and $\sigma_2$ satisfy the system of the following equations
\begin{equation*}
    \begin{cases}
    \sigma_1 + \sigma_2 - 2 \sigma_1 \sigma_2
    & = 0
    \\
    - 2 (\sigma_1 + \sigma_2) + 3 \sigma_1 \sigma_2
    & = 0
    \end{cases}
\end{equation*}
and the only solution (real or complex) to such a system of equations is $\sigma_1 = \sigma_2 = 0$, which contradicts that $\theta_1$ and $\theta_2$ are distinct endpoints.

\item[Case 2] Suppose that $\gamma = x y^{-1}$.  Then
\begin{align*}
    \dfrac{\theta_1 - \theta_2}{\tau+2}
    & = \dfrac{
        \left( xy^{-1}(\sigma_1 (z-1)(z-2) ) - xy^{-1}(\sigma_2 \cdot (z-1)(z-2)) \right)
    }{-4 (z-1)(z-2)} 
    \\
    & = \dfrac{\sigma_1-\sigma_2}{
        -4 \left( - 1 + \sigma_1 z (z-1)(z-2) \right) 
        \left( - 1 + \sigma_2 z (z-1)(z-2) \right)
        }
    \in \bbR
\end{align*}
Because $\sigma_1,\sigma_2 \in \bbR$, it is sufficient to verify that the denominator is real.  Recalling that $\sigma_1 \sigma_2, \sigma_1+\sigma_2 \in \bbQ$, we must have
    \begin{align*}
    \hspace{8mm}
    ( -1 & + \sigma_1
    z (z-1)(z-2) ) \left( -1 + \sigma_2 z (z-1)(z-2) \right)
    \\
     & = (- \sigma_1 - \sigma_2 - 2 \sigma_1 \sigma_2) z^2
    + (3 \sigma_1 \sigma_2 + 2 (\sigma_1 + \sigma_2)) z
    \\
    & \qquad
    + (1 + \sigma_1 + \sigma_2 + \sigma_1 \sigma_2)
      \in \bbR \cap \bbQ(z) = \bbQ
    \end{align*}
Because $\bbQ(z)$ is cubic, $\sigma_1$ and $\sigma_2$ satisfy the system of the following equations
\begin{equation*}
    \begin{cases}
    - (\sigma_1 + \sigma_2) - 2 \sigma_1 \sigma_2
    & = 0
    \\
    2 (\sigma_1 + \sigma_2) + 3 \sigma_1 \sigma_2
    & = 0
    \end{cases}
\end{equation*}
and, as in Case 1, the only solution (real or complex) to such a system of equations is $\sigma_1 = \sigma_2 = 0$, which again contradicts that $\theta_1$ and $\theta_2$ are distinct endpoints.
\end{description}
Since no such pair of distinct $\theta_i$ exists, $\widetilde{\Sigma}$ cannot be the lift of a totally geodesic surface or else Lemma \ref{lem:IntersectionsOfTGS} would be contradicted.  Therefore, $S$ is the unique totally geodesic surface in $M$.
\end{proof}

\begin{remark}
Note that the degenerate solution $\sigma_1 = \sigma_2$ corresponds to either the tangency between $\partial_{\infty} C_1$ and $\partial_{\infty} H$ or the tangency between $\partial_{\infty} C_2$ and $\partial_{\infty} x(H)$. In other words, the degenerate solution occurs precisely when $\Sigma = S$.
\end{remark}


\section{Balanced pretzel knots}
\label{sec:BalancedPretzelKnots}
In this section, we will give the proof of Theorem \ref{t:pret!surf}. Throughout this section, we let $J = P(2k+1,2k+1,2k+1)$ be the 3-tangle balanced pretzel knot with $2k+1$ half twists in each tangle, $M$ the complement of $J$ in $S^3$, and $\Gamma$ the fundamental group of $M$. 

\subsection{Discrete faithful representation and its trace field}

Following \cite[Proposition 2.1]{Zent}, the knot group $
\Gamma$ has the presentation
\begin{equation}
    \label{eq:BalancedPretzelKnotGroup}
    \Gamma = \langle s_1, s_2, s_3 \mid vs_1 = s_2 v, ws_2 = s_3 w\rangle
\end{equation}
where 
\begin{equation}
\label{eq:vwPretzel}
v = (s_{3}^{-1}s_{2})^k s_{3}^{-1} (s_1 s_{3}^{-1})^k\quad \text{and} 
\quad w = (s_{1}^{-1}s_{3})^k s_{1}^{-1} (s_2 s_{1}^{-1})^k. 
\end{equation}

For $k\geq 1$, $M$ admits a complete hyperbolic structure of finite volume. The discrete faithful representation $\rho:\Gamma \to \SL_2(\bbC)$ sends all conjugates of the meridians of the knot to parabolic isometries.

\begin{thm} 
\label{thm:PretzelTraceField}
The discrete faithful representation $\rho:\Gamma \to \SL_2(\bbC)$ can be conjugated to be of the form
\begin{equation}
\label{eq:PretzelHolonomy}
\rho(s_1) = \begin{pmatrix}
1 & 1 \\ 0 & 1 
\end{pmatrix},\quad 
\rho(s_2) = \begin{pmatrix}
1 & 0 \\ -z_k^2 & 1 
\end{pmatrix}
\quad \text{and}\quad
\rho(s_3) = \begin{pmatrix}
1+z_k & 1 \\ -z_k^2 & 1 -z_k
\end{pmatrix}
\end{equation}
where $z_k$ satisfies a polynomial $\Lambda_k(z) \in \bbZ[z]$. The polynomial $\Lambda_k(z)$ is irreducible and defined recursively by 
\begin{equation}
    \label{eq:LambaRecursion}
    \Lambda_k(z) = (z^2+2) \Lambda_{k-1}(z) - \Lambda_{k-2}(z)
\end{equation}
with the initial conditions $\Lambda_0(z)=z-1$ and $\Lambda_1(z) = z^3 - z^2 + 3z -1$.  
\end{thm}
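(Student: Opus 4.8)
The plan is to pin down the shape of the holonomy representation using the order-three symmetry of $P(2k+1,2k+1,2k+1)$, to reduce the two defining relations of $\Gamma$ to a single polynomial equation in one parameter, and finally to prove that the resulting polynomial is irreducible. Throughout I write $S_{-1}=0$, $S_0=1$, $S_j=t\,S_{j-1}-S_{j-2}$ for the Chebyshev-type polynomials, so that $A^k=S_{k-1}(\tr A)\,A-S_{k-2}(\tr A)\,I$ for all $A\in\SL_2(\bbC)$.

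\emph{Step 1: the form of $\rho$.} The knot $P(2k+1,2k+1,2k+1)$ carries an ambient order-three symmetry cyclically rotating its three tangles; on $\pi_1(M)$ this exchanges the two relations of \eqref{eq:BalancedPretzelKnotGroup} and, up to an inner automorphism, realizes the cycle $s_1\mapsto s_2\mapsto s_3\mapsto s_1$ together with $v\mapsto w$. By Mostow rigidity the symmetry is realized by an isometry normalizing $\rho(\Gamma)$, so there is an order-three $C\in\PSL_2(\bbC)$ with $\rho(s_2)=C\rho(s_1)C^{-1}$ and $\rho(s_3)=C\rho(s_2)C^{-1}$. Normalizing $\rho(s_1)=\begin{pmatrix}1&1\\0&1\end{pmatrix}$ and using the residual conjugation freedom together with $C^3=\pm I$ (hence $\tr C=\pm1$), one checks that $C$---and therefore $\rho(s_2)$ and $\rho(s_3)$---is determined by a single complex parameter, which after a convenient normalization is the $z$ appearing in \eqref{eq:PretzelHolonomy}; in these coordinates the lower-left entry of $\rho(s_2)$ is $-z^2$. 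Since the geometric representation carries this symmetry, it has the form \eqref{eq:PretzelHolonomy} for some $z_k$, and it remains to decide for which $z$ the matrices \eqref{eq:PretzelHolonomy} define a representation of $\Gamma$. Because they are built so that $\rho(s_{i+1})=C\rho(s_i)C^{-1}$ and the cyclic permutation carries the word $v$ to $w$, the second relation $ws_2=s_3w$ follows from the first by conjugating by $C$, so it suffices to impose $vs_1=s_2v$.

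\emph{Step 2: the recursion.} A direct computation from \eqref{eq:PretzelHolonomy} gives $\tr\rho(s_3^{-1}s_2)=\tr\rho(s_1s_3^{-1})=z^2+2$, so that $\rho(v)=\rho\!\big((s_3^{-1}s_2)^k s_3^{-1}(s_1s_3^{-1})^k\big)$ is an explicit $2\times2$ matrix whose entries are polynomials in $z$ with coefficients assembled from $S_{k-1}(z^2+2)$ and $S_{k-2}(z^2+2)$. Expanding $\rho(v)\rho(s_1)=\rho(s_2)\rho(v)$ entrywise, the resulting scalar equations all reduce---after clearing common factors---to a single condition $\Lambda_k(z)=0$ for a monic polynomial $\Lambda_k$, and substituting $S_{k-1}(z^2+2)=(z^2+2)S_{k-2}(z^2+2)-S_{k-3}(z^2+2)$ shows that $\Lambda_k$ inherits the recursion $\Lambda_k=(z^2+2)\Lambda_{k-1}-\Lambda_{k-2}$. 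Specializing to $k=0$ and to the knot $9_{35}$ (the case $k=1$, verifiable directly in \cite{SnapPy}) identifies the initial data $\Lambda_0=z-1$ and $\Lambda_1=z^3-z^2+3z-1$; the recursion then gives $\deg\Lambda_k=2k+1$. In particular $z_k$ is a root of $\Lambda_k$, all entries in \eqref{eq:PretzelHolonomy} lie in $\bbQ(z_k)$, and $z_k^2=2-\tr\rho(s_1s_2)\in\bbQ(\tr\Gamma)$.

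\emph{Step 3: irreducibility (the main obstacle).} It remains to show $\Lambda_k$ is irreducible over $\bbQ$; this makes it the minimal polynomial of $z_k$, and combined with Step 2 and the fact that an odd-degree number field has no index-two subfield it also yields $\bbQ(\tr\Gamma)=\bbQ(z_k)$, i.e.\ \cref{cor:PretzelTraceField}. The degree count is immediate, but irreducibility needs real work. I would exploit the Chebyshev structure: writing $\Lambda_k(z)=z\,P_k(z^2)-Q_k(z^2)$ one finds $P_k(u)=S_k(u+2)+S_{k-1}(u+2)$ and $Q_k(u)=S_k(u+2)-S_{k-1}(u+2)$, so that $\Lambda_k(z)=(z-1)S_k(z^2+2)+(z+1)S_{k-1}(z^2+2)$. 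The substitution $z=\zeta-\zeta^{-1}$, under which $z^2+2=\zeta^2+\zeta^{-2}$ and $S_j(\zeta^2+\zeta^{-2})=(\zeta^{2j+2}-\zeta^{-2j-2})/(\zeta^2-\zeta^{-2})$, then gives after clearing denominators
\[
\zeta^{2k+1}(\zeta^2+1)\,\Lambda_k(\zeta-\zeta^{-1})=\zeta^{4k+2}(\zeta^2-\zeta+1)-(\zeta^2+\zeta+1),
\]
so the roots of $\Lambda_k$ correspond, via $z=\zeta-\zeta^{-1}$, to the roots other than $\pm i$ of $\zeta^{4k+2}(\zeta^2-\zeta+1)=\zeta^2+\zeta+1$ (the right-hand side involves only the third and sixth cyclotomic polynomials). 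Since $\bbQ(\zeta)$ is a degree-two extension of $\bbQ(z)$ via $\zeta^2-z\zeta-1=0$, it suffices to show the degree-$(4k+2)$ polynomial $\big(\zeta^{4k+2}(\zeta^2-\zeta+1)-(\zeta^2+\zeta+1)\big)/(\zeta^2+1)$ is irreducible over $\bbQ$; for this I would combine a reduction-modulo-$p$ argument with the fact that the $\Lambda_k$ are pairwise coprime (the recursion forces $\gcd(\Lambda_k,\Lambda_{k-1})$ to descend to $\gcd(\Lambda_1,\Lambda_0)=1$) to rule out a proper factor, using the cyclotomic shape to control the possible factorization types. (One could instead try to leverage the conditional linear-growth statement of \cite[Theorem 1.2]{GB21}, but a self-contained argument through the $\zeta$-equation is preferable.) I expect this irreducibility step---rather than the matrix algebra of Steps 1 and 2, which is lengthy but essentially mechanical once the Chebyshev reduction is set up---to be the real crux of the proof.
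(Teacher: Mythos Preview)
Your Steps 1 and 2 are in the same spirit as the paper's argument, and your substitution $z=\zeta-\zeta^{-1}$ in Step 3 is exactly the one the paper makes (with $\zeta=x$).  There are, however, two genuine gaps.

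\textbf{Step 2: the spurious factor.}  When you expand $\rho(v)\rho(s_1)=\rho(s_2)\rho(v)$ entrywise you do not get $\Lambda_k(z)=0$; you get $(-\beta_k z+\alpha_k)\,\Lambda_k(z)=0$, where $\alpha_k,\beta_k$ are the Chebyshev-type polynomials arising from $(s_3^{-1}s_2)^k$.  Your phrase ``after clearing common factors'' hides a nontrivial step: the vanishing of the linear factor $-\beta_k z+\alpha_k$ is a genuine possibility that must be excluded.  The paper does this by noticing that $\tr\rho\bigl((s_1s_3^{-1})^k s_1\bigr)=2(-\beta_k z+\alpha_k)$, so if that factor vanished the torsion-free group $\Gamma$ would contain an element of finite order.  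Without this observation you have not isolated $\Lambda_k$ as \emph{the} defining polynomial of the geometric representation.

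\textbf{Step 3: irreducibility is not done.}  You correctly reduce to the polynomial $\Psi_k(\zeta)=\bigl(\zeta^{4k+2}(\zeta^2-\zeta+1)-(\zeta^2+\zeta+1)\bigr)/(\zeta^2+1)$, but then you assert that it suffices to prove $\Psi_k$ irreducible and propose a ``reduction-modulo-$p$ plus coprimality'' strategy.  Two problems.  First, irreducibility of $\Psi_k$ is stronger than what is needed (and is not established in the paper): since the map $z\mapsto \zeta$ is two-to-one via $\zeta\mapsto -\zeta^{-1}$, a factorization of $\Lambda_k$ corresponds to a factorization of $\Psi_k$ into \emph{complete} factors (factors invariant under $\zeta\mapsto -\zeta^{-1}$), and ruling out those is enough.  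Second, your proposed strategy is not carried out and, as stated, is unlikely to succeed uniformly in $k$: coprimality of the $\Lambda_k$ says nothing about irreducibility of a fixed $\Lambda_k$, and a mod-$p$ argument valid for all $k$ is not apparent.  The paper's actual argument is analytic and quite different from what you sketch: one studies $\Phi_k(x)=(x^2+1)\Psi_k(x)$, uses the argument principle to locate exactly $k$ simple roots in the open first quadrant, shows they all satisfy $|x_i|>1$, and then observes that the sum of the roots of $\Psi_k$ equals $1$ while each ``block'' $x_j+\bar x_j-x_j^{-1}-\bar x_j^{-1}=2\operatorname{Re}(x_j)(1-|x_j|^{-2})$ (and $x_0-x_0^{-1}$ for the real root) is strictly positive.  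Hence no proper subset of these blocks can sum to an integer, so $\Psi_k$ admits no proper complete factor over $\bbZ$, and therefore $\Lambda_k$ is irreducible.  This positivity-of-summands trick is the idea you are missing.
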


We show that Corollary \ref{cor:PretzelTraceField} is a consequence of Theorem \ref{thm:PretzelTraceField}.

\begin{proof}[Proof of Corollary \ref{cor:PretzelTraceField}]
By \cite[Lemma 3.5.3]{MR03}, the trace field of $\Gamma$ is generated over $\bbQ$ by
\[
\tr(\rho(s_i)) = 2, 
\hspace{2mm}
\tr(\rho(s_1s_2)) =\tr(\rho(s_2s_3)) = 2-z_k^2, 
\hspace{2mm}
\tr(\rho(s_1s_2s_3)) = 2-3z_k^2 -z_k^3.
\]
Therefore, $\bbQ(\tr \Gamma) = \bbQ(z^2_k,z^3_k) = \bbQ(z_k)$. By Theorem \ref{thm:PretzelTraceField}, $z_k$ satisfies the irreducible polynomial $\Lambda_k$ which implies that the degree of $\bbQ(\tr \Gamma)$ is the degree of $\Lambda_k$. By an inductive argument using the recursive relation in \eqref{eq:LambaRecursion} along with its initial conditions, we see that the degree of $\Lambda_k$ is $2k+1$ which completes the proof of this corollary.
\end{proof}

Before proving the theorem, we make some preliminary observations. Let $\rho:\Gamma \to \PSL_2(\bbC)$ be the discrete and faithful representation of $\Gamma$ coming from the hyperbolic structure on $M$. Since $s_i$ is a meridian generator of $\Gamma$, we can conjugate $\rho(s_1)$ and $\rho(s_2)$ to be the upper and lower triangular matrices stated in the theorem for some $z_k \in \bbC$. The image of $s_3$ under $\rho$ takes the form of a generic conjugate of a parabolic isometry which is given by
\begin{equation*}
    \rho(s_3) = \begin{pmatrix}
    a & b \\ c & d 
    \end{pmatrix}
    \begin{pmatrix}
    1 & 1 \\ 0 & 1
    \end{pmatrix}
    \begin{pmatrix}
    d & -b \\ -c & a
    \end{pmatrix} = 
    \begin{pmatrix}
    1 -ac & a^2 \\ -c^2 & 1 + ac
    \end{pmatrix}
\end{equation*}
where $a,b,c,d \in \bbC$ such that $ad-bc = 1$. 

Note that $M$ admits an order three rotational symmetry that cyclically permutes the twist regions. By Mostow rigidity, this rotational symmetry is homotopic to an order three isometry $r:M \to M$. Observe that $r$ permutes the homotopy class of loops $s_1s_2^{-1}$, $s_2s_3^{-1}$ and $s_3s_1^{-1}$ since they are loops surrounding the twist regions (see Figure \ref{fig:9-35_seifert}). It follows that the elements $s_1s_2^{-1}$, $s_2s_3^{-1}$ and $s_3s_1^{-1}$ are conjugate in the orbifold fundamental group of $M/\langle r \rangle$. Thus, we have $\tr(s_1s_2^{-1}) = \tr(s_2s_3^{-1}) =
    \tr(s_3s_1^{-1}) $ or equivalently
\begin{equation*}
    2+z^2 = 2 +a^2z^2 = 2+c^2.
\end{equation*}
Since $z\neq 0$, these equations imply that $a^2= 1$ and $c^2 = z^2$. Without loss of generality, we can choose $c = z$ and $a=-1$. This shows that $\rho(\Gamma)$ can be conjugated to be of the form as stated in \eqref{eq:PretzelHolonomy}.

Let $F_3$ be the free group on three generators $S_1$, $S_2$, and $S_3$. We consider the surjective homomorphism $\pi:F_3 \to \Gamma$ sending $S_i$ to $s_i$ and the homomorphism $P:F_3 \to \SL_2(\bbZ[z])$ defined by
\[
P(S_1) = \begin{pmatrix}
1 & 1 \\ 0 & 1 
\end{pmatrix},\quad 
P(S_2) = \begin{pmatrix}
1 & 0 \\ -z^2 & 1 
\end{pmatrix},\quad
\text{and}\quad
P(S_3) = \begin{pmatrix}
1+z & 1 \\ -z^2 & 1 - z
\end{pmatrix}.
\]
 
\begin{lem}
\label{lem:PrefixSurfixOfVW}
We have the following identities:
\begin{align*}
    P(S_3^{-1}S_2)^k &=  \begin{pmatrix}
    \alpha_k & \beta_k \\ z^3 \beta_k & \delta_k
    \end{pmatrix},
    \hspace{2mm}
    P(S_1S_3^{-1})^k =  \begin{pmatrix}
    \alpha_k & -z\beta_k \\ -z^2 \beta_k & \delta_k 
    \end{pmatrix},\\
    P(S_1^{-1}S_3)^k &=  \begin{pmatrix}
    \alpha_k-2z\beta_k  & -z\beta_k \\ z^2 \beta_k & \delta_k+2z\beta_k 
    \end{pmatrix},
    \hspace{2mm}
    P(S_2S_1^{-1})^k =  \begin{pmatrix}
    \delta_k + z\beta_k & \beta_k \\ z^2 \beta_k & \alpha_k -z\beta_k 
    \end{pmatrix}.
\end{align*}
The polynomials $\alpha_k,\beta_k$ and $\delta_k$ are defined recursively by:
\begin{equation}
\label{eq:PrefixSurfixOfV}
P(S_3^{-1}S_2)^k = (z^2 + 2)P(S_3^{-1}S_2)^{k-1} - P(S_3^{-1}S_2)^{k-2}
\end{equation}
where $\alpha_0 = \delta_0 = 1$, $\beta_0= 0$, $\alpha_1= z^2-z+1$, $\beta_1 = -1$, $\delta_1 = z+1$. Furthermore, we have
\[
\delta_k = \alpha_k-2\beta_kz +\beta_kz^2.
\]
\end{lem}

\begin{proof}
Applying Cayley--Hamilton, we obtain \eqref{eq:PrefixSurfixOfV}. The initial conditions are obtained by directly compute $P(S_3^{-1}S_2)^k$ when $k=0$ and $k=1$. Similarly, we see that $P(S_1S_3^{-1})^k$, $P(S_1^{-1}S_3)^k$, and $P(S_2S_1^{-1})^k$ all satisfy the same recurrence as in \eqref{eq:PrefixSurfixOfV}. The formulas for $P(S_1S_3^{-1})^k$, $P(S_1^{-1}S_3)^k$, and $P(S_2S_1^{-1})^k$ can be verified by observing that they hold for $k=0$ and $k=1$ and are preserved by the recurrence.  

It remains to check the final identity relating $\alpha_k$, $\beta_k$, and $\delta_k$. Since the identity relating $\alpha_k$, $\beta_k$, and $\delta_k$ is $\bbZ[z]$-linear in $\alpha_k$, $\beta_k$, and $\delta_k$ and $\alpha_k$, $\beta_k$, and $\delta_k$ satisfy the same recursion, it suffices to check that the identity holds for $k=0$ and $k=1$. 
\end{proof}

Let $V$ and $W$ be the lift of $v$ and $w$ using the respective word in \eqref{eq:vwPretzel}. Let us write 
\[
P(V) = \begin{pmatrix}
v_k^{11} & v_k^{12} \\ v_k^{21} & v_k^{22}
\end{pmatrix} 
\quad \text{and} \quad
P(W) = \begin{pmatrix}
w_k^{11} & w_k^{12} \\ w_k^{21} & w_k^{22}
\end{pmatrix} 
\]
where $v^{ij}_k, w_k^{ij} \in \bbZ[z]$. A direct calculation using the identities in Lemma \ref{lem:PrefixSurfixOfVW} gives us
\begin{align}
\label{eq:Ventries}
    &v^{11}_k = (-\beta_kz +\alpha_k)(\beta_kz^2+(\beta_k-\alpha_k)z+\alpha_k),
    \hspace{2mm}
    v^{21}_k = -z^2 v^{12}_k,\\
    \nonumber
    &(w_k^{11}-w^{22}_k)z + w^{21}_k = 0,
    \hspace{2mm}
    w_k^{12} z + w_k^{22} = (-\beta_kz +\alpha_k)(\beta_kz^2+(\beta_k-\alpha_k)z+\alpha_k)
\end{align}
We have the following lemma.

\begin{lem}
\label{lem:RuleOutOneFactor}
Let $\rho$ be the discrete and faithful representation of $\Gamma$ coming from the hyperbolic structure given by \eqref{eq:PretzelHolonomy}. Let $z_k \in \bbC$ and $\eval:\SL_2(\bbZ[z]) \to \SL_2(\bbC)$ be the evaluation map at $z= z_k$. Then $\eval\circ P=\rho \circ \pi$ if and only if $z_k$ is a root of
\[\Lambda_k(z) := \beta_kz^2+(\beta_k-\alpha_k)z+\alpha_k.\]
Furthermore, $\Lambda_k$ can also be defined recursively by 
\[
    \Lambda_k(z) = (z^2+2) \Lambda_{k-1}(z) - \Lambda_{k-2}(z)
\]
with the initial conditions $\Lambda_0(z)=z-1$ and $\Lambda_1(z) = z^3 - z^2 + 3z -1$. 
\end{lem}

\begin{proof}
The map $\eval\circ P$ factors through $\Gamma$ if and only if $z_k$ satisfies 
\[P(VS_1) =P(S_2V), \quad \text{and} \quad P(WS_2) = P(S_3W).\] The first equation is equivalent to $v^{11}_k = 0$ and $v^{21}_k = - z^2v^{12}_k$ while the second equation is equivalent to $(w_k^{11}-w^{22}_k)z + w^{21}_k = 0$ and $w_k^{12} z + w_k^{22}=0$. The calculation prior to the lemma shows that $z_k$ satisfying these equations is equivalent to $z_k$ satisfying
\[
(-\beta_kz +\alpha_k)(\beta_kz^2+(\beta_k-\alpha_k)z+\alpha_k) = (-\beta_kz +\alpha_k)\Lambda_k (z).
\]
To prove this lemma, we must rule out the case that $z_k$ satisfies $-\beta_kz + \alpha_k$. 

For a contradiction, suppose that $z_k$ satisfies $-\beta_kz + \alpha_k$. Note that
\[
    \tr(P((S_1S_3^{-1})^kS_1)) 
= -  \beta_kz^2 + \alpha_k + \delta_k  
= 2(-\beta_k z + \alpha_k ).
\]
If $z_k$ satisfies $-\beta_kz + \alpha_k=0$, then $\rho(\Gamma)$ contains a finite order element. This contradicts the fact that $\rho$ is faithful and $\Gamma$ is torsion-free. As a consequence, $\eval \circ P =\rho \circ \pi$ if and only if $z_k$ satisfies $\Lambda_k(z)=0$. The claim about the recurrence for $\Lambda_k$ follows from the fact that $\alpha_k$ and $\beta_k$ satisfy the same recurrence and that the formula for $\Lambda_k$ in terms of $\alpha_k$ and $\beta_k$ is $\bbZ[z]$-linear. Finally, the initial condition for $\Lambda_k$ is obtained by a direct calculation for $k=0$ and $k=1$. 
\end{proof}

Now we will turn our attention to the irreducibility of $\Lambda_k(z)$. Using the recursion for $\Lambda_k(z)$, we get a closed formula of $\Lambda_k(z)$:
\begin{equation}
    \label{eq:Lambda_z} 
    \Lambda_k(z) = -\sum_{j=0}^k\binom{k+j}{2j}z^{2j} + \sum_{j=0}^{k} \left(\binom{k+j}{2j+1} + \binom{k+j+1}{2j+1}\right) z^{2j+1}
\end{equation}
where any binomial term with larger lower entry evaluates to zero by convention.
The idea for the proof of irreducibility of $\Lambda_k(z)$ is similar to that of irreducibility of the Riley polynomial for twist knots in \cite{HS01}. One explanation for the similarity is that both families of twist knots and balanced pretzel knots are obtained from doing $1/n$ Dehn filling on the Whitehead link and the augmented pretzel link, respectively (see \cite{MMT20}). Both of these links are arithmetic with trace field $\bbQ(i)$ \cite[Theorem 1.2]{MMT20}. Following \cite[Section 3]{HS01}, we consider the substitution $z = x-x^{-1}$. 

Let $\Psi_k(x) = x^{2k+1}\Lambda_k(x-x^{-1})$. Using the recursive formula for $\Lambda_k(z)$, we get
\begin{equation}
    \Psi_k(x) = x^{4k+2} - 1 + \sum_{j=0}^{2k} (-1)^{j+1}x^{2j+1}
\end{equation}

\begin{prop}
\label{prop:RootsOfPsi}
The polynomial $\Psi_k(x)$ has two real roots and $k$ distinct roots in the interior of each quadrant. If $x_0>0$ is the positive real root of $\Psi_k(x)$ and $x_1,\dots,x_k$ are the roots of $\Psi_k(x)$ in the interior of the first quadrant, then $|x_i| > 1$ for all $0 \leq i \leq k$. 
\end{prop}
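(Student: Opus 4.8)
The plan is to bypass the argument principle and read the root distribution of $\Psi_k$ directly off two elementary facts: a factorization of $\Psi_k$ coming from the substitution $z = x - x^{-1}$ already in play, and a ``modulus dichotomy'' linking the size of $|x|$ relative to $1$ with the sign of $\Re(x)$ along the zero set of $\Psi_k$. First I would reformulate: using $\sum_{j=0}^{2k}(-1)^{j+1}x^{2j+1} = -x(1+x^{4k+2})/(1+x^2)$ one gets $(x^2+1)\Psi_k(x) = x^{4k+2}(x^2-x+1) - (x^2+x+1)$, and a direct evaluation gives $\Psi_k(\pm i) = -2 \mp (2k+1)i \neq 0$. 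Hence every root $\alpha$ of $\Psi_k$ satisfies $\alpha \neq \pm i$ and $\alpha^{4k+2} = F(\alpha)$ where $F(x) := (x^2+x+1)/(x^2-x+1)$. Writing $x = a+bi$ one verifies the identity $|x^2+x+1|^2 - |x^2-x+1|^2 = 4\,\Re(x)\,(|x|^2+1)$, so $|F(x)|$ is larger than, equal to, or smaller than $1$ exactly according to the sign of $\Re(x)$. Since $|\alpha|^{4k+2} = |F(\alpha)|$ for a root $\alpha$, this gives three things at once: $\Psi_k$ has no root on the imaginary axis; $|\alpha| > 1 \iff \Re(\alpha) > 0$ and $|\alpha| < 1 \iff \Re(\alpha) < 0$ for every root; and $\Psi_k$ has no root on the unit circle, because such a root would have $\Re(\alpha) = 0$ and hence equal $\pm i$, which is excluded. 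The second of these already proves the size assertion of \cref{prop:RootsOfPsi}, since the positive real root and every root in the open first quadrant have $\Re > 0$.

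Next I would locate the real roots by comparing $x \mapsto x^{4k+2}$ with $F$ on $\bbR$. Here $F > 0$, $F'(x) = 2(1-x^2)/(x^2-x+1)^2$, so $F$ has maximum $F(1) = 3$, minimum $F(-1) = 1/3$, and tends to $1$ at $0$ and at $\pm\infty$; meanwhile $x^{4k+2}$ is even and strictly increasing on $[0,\infty)$. On $(0,1]$ one has $x^{4k+2} \le 1 < F(x)$, while on $[1,\infty)$ the difference $x^{4k+2} - F(x)$ increases strictly from $-2$ to $+\infty$, giving a unique positive root $x_0 \in (1,\infty)$; the parallel comparison on $(-\infty,0)$ produces a unique negative root, lying in $(-1,0)$. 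So $\Psi_k$ has exactly two real roots, and $|x_0| > 1$.

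Finally I would count by modulus and put the pieces together. Because $\Psi_k(x) = x^{2k+1}\Lambda_k(x - x^{-1})$ with $\Lambda_k$ monic of degree $2k+1$, factoring $\Lambda_k(z) = \prod_{i=1}^{2k+1}(z - z_i)$ over $\bbC$ yields $\Psi_k(x) = \prod_{i=1}^{2k+1}(x^2 - z_i x - 1)$. The two roots of each quadratic factor have product $-1$, hence reciprocal absolute values, and by the first paragraph neither lies on the unit circle; so each factor contributes exactly one root of modulus $> 1$ and one of modulus $< 1$, and $\Psi_k$ has exactly $2k+1$ roots (with multiplicity) of each kind. By the first paragraph the $2k+1$ roots of modulus $> 1$ are precisely the roots with $\Re > 0$: the single positive real root $x_0$ together with the non-real roots in the open first and fourth quadrants, which complex conjugation interchanges. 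Hence $1 + 2\,\#\{\text{roots in the first quadrant}\} = 2k+1$, so there are exactly $k$ in the first quadrant and $k$ in the fourth; running the same argument on the $2k+1$ roots of modulus $< 1$ (those with $\Re < 0$) gives $k$ in each of the second and third quadrants, accounting for all $2 + 4k = 4k+2$ roots. Distinctness of the $k$ roots in a quadrant is the same as squarefreeness of $\Psi_k$, and since $\pm i$ are not roots this reduces to squarefreeness of $\Lambda_k$ (a double root of some $x^2 - z_i x - 1$ would occur only at $z_i = \pm 2i$, forcing a double root at $\pm i$); this I would either verify directly via $\gcd(\Lambda_k,\Lambda_k') = 1$, for which $\gcd(\Lambda_0,\Lambda_1) = 1$ and the recurrence \cref{eq:LambaRecursion} are the starting point, or deduce from the irreducibility of $\Lambda_k$ established afterwards.

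The main obstacle is really the first paragraph: everything else is routine, but the whole count rests on $\Psi_k$ having no roots on the unit circle, and that is exactly what the identity $|x^2+x+1|^2 - |x^2-x+1|^2 = 4\Re(x)(|x|^2+1)$ is for. The one genuine loose end is the squarefreeness of $\Lambda_k$ that the word ``distinct'' requires; if a direct $\gcd$ argument is inconvenient it can be deferred until after the irreducibility of $\Lambda_k$ is proved. A less slick alternative to the first paragraph would be to count zeros inside $|x| = 1$ via the argument principle, but tracking $\arg\Psi_k(e^{i\theta})$ across the sign changes of $2\cos\theta - 1$ is noticeably messier.
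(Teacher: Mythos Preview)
Your approach is correct and genuinely different from the paper's. The paper works with $\Phi_k=(x^2+1)\Psi_k$, locates one root in each of $k$ angular sectors of the first quadrant via the argument principle, and then uses the four-fold symmetry $x\mapsto\{x,\bar x,-x^{-1},-\bar x^{-1}\}$ together with a degree count to finish (this simultaneously yields simplicity of all roots). You instead extract everything from the identity $|x^2+x+1|^2-|x^2-x+1|^2=4\,\Re(x)(|x|^2+1)$, which immediately ties $|\alpha|\gtrless 1$ to $\operatorname{sgn}\Re(\alpha)$ on the zero set, and then count by modulus using the factorization $\Psi_k=\prod_i(x^2-z_ix-1)$. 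Your route is more elementary (no contour integration) and the modulus dichotomy is a clean replacement for the paper's separate computation of $|p(x)|$ near the unit circle; the paper's route, on the other hand, yields distinctness for free and even localizes the roots to narrow sectors.

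The one point that needs tightening is the ``distinct'' assertion. Your proposed fix via $\gcd(\Lambda_0,\Lambda_1)=1$ and the recurrence only shows that consecutive $\Lambda_k$ are coprime, not that each $\Lambda_k$ is squarefree; so that suggestion does not close the gap as stated. Deferring to irreducibility looks circular at first, since in the paper \cref{prop:RootsOfPsi} feeds into \cref{lem:FactorizationOfPsi} and then \cref{cor:IrreducibilityLambda}. But in fact the sum-of-roots argument in \cref{lem:FactorizationOfPsi} only needs what you have already established: every $G$-orbit sum (for $G=\langle\text{conjugation},\,x\mapsto -x^{-1}\rangle$) is strictly positive, and the total is $1$. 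That argument then rules out any proper complete factor of $\Psi_k$, and a repeated irreducible factor $p$ would force $p$ (if $p=p^*$) or $pp^*$ (if $p\neq p^*$) to be a proper complete factor. So your work actually suffices to run \cref{lem:FactorizationOfPsi} with multiplicities and deduce squarefreeness of $\Psi_k$ directly; you should say this rather than gesture at a $\gcd$ computation.
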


\begin{proof}
    We consider the auxiliary polynomial 
\begin{equation}
    \Phi_k(x) = (x^2+1) \Psi_k(x) = x^{4k+4}-x^{4k+3} + x^{4k+2} - x^2-x-1.
\end{equation}
We will first study the roots of $\Phi_k$. Note that $x$ satisfies $\Phi_k$ if and only if $x$ satisfies 
\begin{equation*}
    x^{4k+2} - \frac{x^2+x+1}{x^2-x+1} =0.
\end{equation*}
We claim that $\Phi_k(x)$ has exactly one positive real root $x_0 >0$. For convenience, we write
\begin{equation*}
    f(x) = \frac{x^2+x+1}{x^2-x+1} \text{ and } g(x) = x^{4k+2} - f(x).
\end{equation*}
Since $f(x) > 1$ and $x^{4k+2}\leq 1$ for all $0<x<1$, any positive root of $\Phi_k$ must be strictly larger than 1. The derivative of $g(x)$ is 
\begin{equation*}
    g'(x) =(4k+2)x^{4k+1}+ \frac{2(x^2-1)}{(x^2-x+1)^2}.
\end{equation*}
Therefore, $g'(x)$ is positive for all $x \geq 1$ and $g(x)$ is strictly increasing on $[1,\infty)$. Since $g(1) = -2$ and $g(x)$ tends to $+\infty$ as $x$ tends to $\infty$, there exists a unique $x_0 > 1$ such that $g(x_0) = 0$. It follows that $\Phi_k(x)$ has exactly one real positive root $x_0 > 1$. Therefore, $\Phi_k$ and hence $\Psi_k$ has exactly two real roots $\{x_0, -x_0^{-1}\}$.  

We next claim that $\Phi_k$ has exactly $k$ roots in the interior of the second quadrant. We prove this using the argument principle. In particular for any $0\leq n \leq k-1$, let $\gamma_n$ be the sector bounded by
\begin{itemize}
    \item the rays $r_n$ and $r_{n+1}$ where 
    \begin{equation*}
        r_n = \{t e^{i\theta_n} \mid 0\leq t \leq 1\} \text{ and } \theta_n = \frac{\pi}{2} + \frac{2n+1}{4k+4}\pi= \frac{2k + 2n+3}{4k+4} \pi
    \end{equation*}
    \item and the arc
    \begin{equation*}
        c_n = \{e^{it} \mid \theta_n \leq t \leq \theta_{n+1}\}
    \end{equation*}
\end{itemize}
We will show that $\frac{1}{2\pi i } \int_{\gamma_n} d \log(g(x)) = 1$ for all $0\leq n \leq k-1$ where $\log$ has a branch cut along $[0,\infty) \subset \bbR$. To compute this integral, we count the winding number of $g(\gamma_n)$ around $0$.

We first claim that the image of $r_n$ for $0\leq n \leq k$ under $g(x)$ lies in the lower-half plane. The imaginary part of $g(te^{i\theta})$ is 
\begin{equation*}
    \text{Im}(g(te^{i\theta})) = t^{4k+2}\sin((4k+2)\theta) + \frac{2t(t^2-1)\sin\theta}{|t^2e^{i2\theta}-te^{i\theta}+1|^2}. 
\end{equation*}
Since $(2k+2n+1)\pi<(4k+2)\theta_n < (2k+2n+2)\pi$, we have $\sin((4k+2)\theta_n) < 0.$ Furthermore, we have $\pi/2< \theta_n < \pi$ for any $0\leq n \leq k$. It follows that
\[
\text{Im}(g(te^{i\theta_n})) < 0  
\]
for $0< t \leq 1$ and $0 \leq n\leq k$. When $t=0$, $g(0) = - 1$. Therefore the image of $r_n$ for $0\leq n \leq k$ under $g(x)$ starts at $-1$ and remains in the lower-half plane for $0<t<1$. In particular, $g(r_n)$ and $g(r_{n+1})$ do not cross the branch cut $[0,\infty)$ of $\log$. 

Next, we will traverse the circular arc of $\gamma_n$. We have 
\[
\text{Im}(g(e^{it})) = \sin((4k+2)t). 
\]
Observe that the interval $[(4k+2)\theta_n,(4k+2) \theta_{n+1}]$ contains exactly two integer multiple of $\pi$: 
\[
(2k+2n+2)\pi \text{ and } (2k+2n+3)\pi. 
\]
Therefore, $g(c_n)$ intersects the real axis exactly twice. Furthermore, the intersection must be transverse since the derivative of $g(c_n)$ in the $y$-direction is not zero at the intersections. The real part of $g(e^{it})$ is 
\[
\text{Re}(g(e^{it})) = \cos((4k+2)t) - \frac{2\cos t + 1}{2\cos t - 1}
\]
which takes a positive and a negative value when $(4k+2)t $ is equal to 
\[
(2k+2n+2)\pi \text{ and } (2k+2n+3)\pi,
\]
respectively. We have showed that the curve $g(\gamma_n)$ intersects the positive real axis transversely at exactly one point. Therefore, $g(r_n \cup c_n \cup r_{n+1})$ winds about the origin exactly once. This implies that the interior of the second quadrant contains exactly $k$ roots of $g(x)$ which are also exactly $k$ roots of $\Phi_k$ in this quadrant.         

Since $\Phi_k(-x^{-1})=-x^{-4k-4}\Phi_k(x)$, the roots of $\Phi_k$ come in sets of four distinct roots, namely 
\[
\{x,\overline{x},-x^{-1},-\overline{x}^{-1}\},
\] 
except when one of the roots is real or $\pm i$. Since $\Phi_k$ has $k$ distinct roots in the interior of the second quadrant, they account for $4k$ distinct roots of $\Phi_k$. Furthermore, $\Phi_k$ has $\pm i$ as roots and exactly two real roots. All roots of $\Phi_k$ are simple because the degree of $\Phi_k$ is $4k+4$. Since $\Phi_k(x) = (x^2+1)\Psi_k(x)$, the roots of $\Psi_k(x)$ are all roots of $\Phi_k$ except $\pm i$. 

Let $x_1,\dots,x_k$ be $k$ distinct roots in the interior of the first quadrant. To complete the proof, we need to show that $|x_i| > 1$. When $|x| =1$ is a root of $\Phi_k$, $|f(x)| = 1$ implies that $x = \pm i$. Suppose that $|x| <1$ is a root of $\Phi_k$. Without loss of generality, we can assume that $x = re^{i\theta}$ where $ 0 < \theta < \pi/2$. However, we have
\[
|f(x)|^2 = \frac{r^4+r^2+1 +2r^2\cos(2\theta) +\cos \theta(2r+2r^3)}{r^4+r^2+1 +2r^2\cos(2\theta) -\cos \theta(2r+2r^3)} > 1
\]
since $\cos\theta >0$ and $r > 0$. This is the desired contradiction.  
 \end{proof}

\begin{lem}
\label{lem:FactorizationOfPsi}
Either $\Psi(r)$ is irreducible or it factors into exactly two irreducible factors of equal degree; that is, $\Psi_k(r) = p(r)q(r)$ where $q(r) = r^{\deg \Psi_k/2} p(-r^{-1})$. 
\end{lem}

\begin{proof}
Suppose that $\Psi_k(x)$ factors into irreducible factors $
p_1(x)\dots p_s(x)$. We have
\[
\Psi_k(x)= x^{4k+2} \Psi_k(-x^{-1}) =\pm ( x^{d_1}p_1(-x^{-1}))\dots( x^{d_s}p_s(-x^{-1}))
\]
where $d_i$ is the degree of $p_i(x)$. The factoring is unique up to reordering, so for every $1\leq j\leq s$, there exists $1 \leq i \leq s$ such that
\[
p_j(x) = x^{d_i} p_i(-x^{-1})
\]
Observe that if $r_0$ is a root of $\Psi_k(x)$, then so is $-r_0^{-1}$. By Proposition \ref{prop:RootsOfPsi}, $\Psi_k(x)$ has no repeated roots. We either have that $r_0$ and $-r_0^{-1}$ are roots of a unique irreducible factor of $\Psi_k(x)$ or $r_0$ and $-r_0^{-1}$ are roots of distinct irreducible factors that come in pairs $p_j(x) = x^{d_i} p_i(-x^{-1})$. Following \cite{HS01}, suppose that $f(x) = g(x)h(x)$. We call a factor $g(x)$ \emph{complete} if $g(x)= 0$ implies that $g(-x^{-1}) = 0$. Therefore, $\Psi_k(x)$ factors into complete factors or pairs of complete factors. 

We claim that there is no complete factor in any factorization of $\Psi_k(r)$ except for $\Psi_k(r)$. Following Proposition \ref{prop:RootsOfPsi}, we let $x_1,\dots,x_k$ be roots of $\Psi_k(x)$ in the interior of the first quadrant, $x_0>0$ and $-x_0^{-1}$ be real roots of $\Psi_k(x)$. The sum of the roots of $\Psi_k$ is
\[
x_0 -x_0^{-1} + \sum_{j=1}^{k}(x_j + \overline{x}_j -x_j^{-1} -\overline{x}_j^{-1}) =x_0(1-x_0^{-2}) + \sum_{j=1}^{k}2 \rpart(x_i)(1-|x_i|^{-2}) = 1
\]
Since $|x_i| > 1$ and $\rpart(x_i) >0$ for all $0\leq i \leq k$, each summand is a positive number. Therefore, no proper subset of these summands can add to an integer. It follows that no factor of $\Psi_k(x)$ over $\bbZ$ can be complete.

Therefore, $\Psi_k(x)$ is either irreducible or factors into pairs of incomplete factors. If there is more than one pair of incomplete factors, then we may combine one pair of incomplete factors to obtain a complete one. Thus, we can only have one pair of incomplete factors when factoring $\Psi_k(x)$.  
\end{proof}

A consequence of this lemma is that:

\begin{cor}
\label{cor:IrreducibilityLambda}
The polynomial $\Lambda_k(z)$ is irreducible. 
\end{cor}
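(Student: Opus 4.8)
The plan is to transport the irreducibility of $\Psi_k$ established in \cref{lem:FactorizationOfPsi} to $\Lambda_k$ across the substitution $z = x - x^{-1}$, the obstruction being resolved by a parity argument on degrees. First I would record that the recursion \cref{eq:LambaRecursion} together with $\Lambda_0(z) = z - 1$ and $\Lambda_1(z) = z^3 - z^2 + 3z - 1$ shows, by an immediate induction, that $\Lambda_k(z)$ is monic of degree $2k+1$. By Gauss's lemma it then suffices to rule out a factorization $\Lambda_k = AB$ in $\bbZ[z]$ with $A$ and $B$ monic of degrees $a \geq 1$ and $b \geq 1$, $a + b = 2k+1$.

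Assume such a factorization exists. Using $\Psi_k(x) = x^{2k+1}\Lambda_k(x-x^{-1})$ and $x - x^{-1} = (x^2-1)/x$, I would split $x^{2k+1} = x^a x^b$ and set $p(x) = x^a A(x-x^{-1})$ and $\tilde p(x) = x^b B(x-x^{-1})$, so that $\Psi_k = p\,\tilde p$. Writing $A(z) = \sum_{i=0}^a c_i z^i$ with $c_a = 1$ gives $p(x) = \sum_{i=0}^a c_i\, x^{a-i}(x^2-1)^i \in \bbZ[x]$, a monic polynomial whose leading term $(x^2-1)^a$ has degree exactly $2a$ and whose constant term is $(-1)^a \neq 0$; likewise $\tilde p \in \bbZ[x]$ is monic of degree $2b$. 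Thus $\Psi_k$ factors over $\bbZ$ into two non-constant monic polynomials of even degrees $2a$ and $2b$.

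This is incompatible with \cref{lem:FactorizationOfPsi}. The polynomial $\Psi_k$ cannot be irreducible in this situation, since $2 \leq 2a \leq 4k < \deg \Psi_k = 4k+2$. Hence \cref{lem:FactorizationOfPsi} forces $\Psi_k$ to be a product of exactly two monic irreducible factors, each of degree $2k+1$. But then $p$, being a monic divisor of $\Psi_k$ in $\bbQ[x]$, is a product of a subset of these two irreducible factors, so $\deg p = 2a$ lies in $\{0,\, 2k+1,\, 4k+2\}$; the odd value $2k+1$ is impossible because $2a$ is even, and $0$ and $4k+2$ are impossible because $1 \leq a \leq 2k$ gives $2 \leq 2a \leq 4k$. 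This contradiction completes the proof that $\Lambda_k(z)$ is irreducible.

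I do not expect a serious obstacle: the heavy lifting (counting and locating the roots of $\Psi_k$, and excluding complete factors) is already contained in \cref{prop:RootsOfPsi} and \cref{lem:FactorizationOfPsi}. The only points needing a moment's care are checking that $p$ and $\tilde p$ are genuine non-constant monic polynomials of the stated even degrees, and the parity clash between those even degrees and the odd common degree $2k+1$ of the irreducible factors of $\Psi_k$.
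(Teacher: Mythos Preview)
Your argument is correct and follows the same overall strategy as the paper: transport a hypothetical factorization $\Lambda_k = AB$ through the substitution $z = x - x^{-1}$ to obtain a factorization of $\Psi_k$, and then invoke \cref{lem:FactorizationOfPsi} for a contradiction.  The only difference is in how the contradiction is extracted.  The paper observes that the induced factors $p(x) = x^a A(x - x^{-1})$ are \emph{complete} (since $x \mapsto -x^{-1}$ fixes $x - x^{-1}$, any root $x_0$ of $p$ forces $-x_0^{-1}$ to be a root as well), and then appeals to the intermediate claim in the proof of \cref{lem:FactorizationOfPsi} that $\Psi_k$ has no proper complete factor.  You instead use only the stated conclusion of \cref{lem:FactorizationOfPsi} and a parity argument: the induced factors have even degree $2a$, whereas the only possible irreducible factors of $\Psi_k$ have the odd degree $2k+1$.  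Your route is arguably cleaner in that it relies only on the lemma's statement rather than on a claim buried in its proof; the paper's route is a shade shorter once one knows that claim.
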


\begin{proof}
Note that any factoring of $\Lambda_k(z)$ induces a factoring of $\Psi_k(x)$ into complete factors. However, such factoring of $\Psi_k$ is impossible by Lemma \ref{lem:FactorizationOfPsi}. 
\end{proof}

\begin{proof}[Proof of Theorem \ref{thm:PretzelTraceField}]
By Lemma \ref{lem:RuleOutOneFactor}, the assignment in \eqref{eq:PretzelHolonomy} satisfies the group relations and gives a discrete faithful representation if and only if $z_k$ is a root of $\Lambda_k(z)$. Lemma \ref{lem:RuleOutOneFactor} also gives the recursive formula that defines $\Lambda_k(z)$. Corollary \ref{cor:IrreducibilityLambda} gives the irreducibility of $\Lambda_k(z)$ for all $k\geq 1$. 
\end{proof}

\subsection{Boundary slope restriction}
\label{sec:PretzBdry}

We will study totally geodesic surfaces in the complement of balanced pretzel knots following the outline used in Section \ref{sec:74}. Throughout the rest of this section, we will identify $\Gamma$ with its image under the discrete faithful representation given in \eqref{eq:PretzelHolonomy}. We first observe that:

\begin{cor}
\label{cor:PretzelNoClosedTGS}
The complement of $P(2k+1,2k+1,2k+1)$ does not contain any closed totally geodesic surface when $2k+1$ is prime.
\end{cor}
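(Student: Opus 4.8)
The plan is to verify that $\Gamma$ satisfies the hypotheses of Reid's \cref{prop:NoClosedTG} and then invoke that proposition directly; the role of the primality assumption on $2k+1$ is precisely to secure the ``no proper real subfield'' condition.

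First I would record that $\Gamma$ is a non-cocompact Kleinian group of finite covolume, since $M$ is a one-cusped hyperbolic $3$-manifold. Next I would check that $\Gamma$ has integral traces. By \cref{thm:PretzelTraceField} the discrete faithful representation can be conjugated to have entries in $\bbZ[z_k]$, where $z_k$ is a root of $\Lambda_k(z) \in \bbZ[z]$, so it suffices to observe that $\Lambda_k$ is monic. This follows by an immediate induction from the recursion $\Lambda_k(z) = (z^2+2)\Lambda_{k-1}(z) - \Lambda_{k-2}(z)$: the initial polynomials $\Lambda_0(z) = z-1$ and $\Lambda_1(z) = z^3 - z^2 + 3z - 1$ are monic of degrees $1$ and $3$, and if $\Lambda_{k-1}$ is monic of degree $2k-1$ then $(z^2+2)\Lambda_{k-1}$ is monic of degree $2k+1 > 2k-3 = \deg \Lambda_{k-2}$, so $\Lambda_k$ is monic of degree $2k+1$. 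Hence $z_k$ is an algebraic integer, every matrix entry of every element of $\Gamma$ lies in $\bbZ[z_k]$, and in particular $\tr \Gamma$ consists of algebraic integers.

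The remaining hypothesis is that $\bbQ(\tr \Gamma)$ has odd degree over $\bbQ$ and contains no proper real subfield other than $\bbQ$. By \cref{cor:PretzelTraceField} we have $\bbQ(\tr \Gamma) = \bbQ(z_k)$, and this field has degree $2k+1$ over $\bbQ$, which is odd. Now I would bring in the primality hypothesis: if $2k+1$ is prime and $L$ is any field with $\bbQ \subseteq L \subseteq \bbQ(z_k)$, then by the tower law $[L:\bbQ]$ divides $2k+1$, so $[L:\bbQ] \in \{1, 2k+1\}$, forcing $L = \bbQ$ or $L = \bbQ(z_k)$. Thus $\bbQ(z_k)$ has no proper subfield other than $\bbQ$ at all, and a fortiori no proper real subfield other than $\bbQ$. (Since $M$ is hyperbolic we have $k \geq 1$, so $\bbQ(z_k) \neq \bbQ$ and the statement is non-vacuous.)

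With all hypotheses of \cref{prop:NoClosedTG} verified, that proposition yields that $\Gamma$ contains no cocompact Fuchsian subgroup. A closed totally geodesic surface in $M$ would lift to a totally geodesic plane in $\bbH^3$ whose stabilizer in $\Gamma$ is a cocompact Fuchsian group, so no such surface can exist, which is the claim. I do not expect a real obstacle here: the substantive input is already contained in \cref{thm:PretzelTraceField} and \cref{cor:PretzelTraceField}, and the only points requiring care are the monicity of $\Lambda_k$ (giving integrality of traces) and the elementary field-theoretic observation that prime degree excludes intermediate subfields.
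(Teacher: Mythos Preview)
Your proof is correct and follows essentially the same route as the paper: verify the hypotheses of \cref{prop:NoClosedTG} using \cref{thm:PretzelTraceField} for integrality of traces and \cref{cor:PretzelTraceField} together with primality of $2k+1$ for the odd prime degree (hence no intermediate subfields), then invoke the proposition. The only difference is that you spell out the monicity of $\Lambda_k$ explicitly, whereas the paper simply asserts that \cref{thm:PretzelTraceField} yields integral traces.
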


\begin{proof}
Corollary \ref{cor:PretzelTraceField} implies that the degree of the trace field of $\Gamma$ is an odd prime. Therefore, the trace field of $\Gamma$ also contains no proper real subfield besides $\bbQ$. Theorem \ref{thm:PretzelTraceField} gives that $\Gamma$ has integral traces. As a corollary of Proposition \ref{prop:NoClosedTG}, $M$ does not contain any closed totally geodesic surface when $2k+1$ is a prime. 
\end{proof}

We now give a topological and group theoretic description of a totally geodesic Seifert surface in the $M$ and describe the meridian and longitude. The shaded surface $S$ in Figure \ref{fig:pretzel_seifert}
is a Seifert surface of genus 1. It was shown by Adams and Schoenfeld that $S$ is totally geodesic \cite[Example 3.1]{AS05} in $M$. By a direct computation using the knot diagram, $\pi_1(S)$ is conjugate to a subgroup $\Delta \le \Gamma$ generated by 
  \begin{align} \label{eq:SeifertSurfaceGens}
    x & = (s_1 s_2^{-1})^{k + 1} (s_3 s_2^{-1})^{k}
    \\
    y & = (s_2 s_3^{-1})^{k + 1} (s_1 s_3^{-1})^{k}
  \end{align}
It is also convenient to note that 
  \begin{equation*}
    \Delta = \begin{pmatrix}
    z^{-1/2} & 0 \\ 
    0 & z^{1/2}
    \end{pmatrix} \Delta' \begin{pmatrix}
    z^{1/2} & 0 \\ 
    0 &  z^{-1/2}
    \end{pmatrix}
  \end{equation*}
where $\Delta'\le \PSL(2,\bbZ)$ is defined as
  \begin{equation*}
    \Delta' = \left\langle 
    \begin{pmatrix}
    2 & 1 \\ 1 & 1 
    \end{pmatrix},
    \begin{pmatrix}
    0 & -1 \\ 1 & 3
    \end{pmatrix}
    \right\rangle
  \end{equation*}
The longitude of the balanced pretzel knot is the boundary of the Seifert surface $S$ and is given by 
  \begin{equation*}
    \ell = y^{-1}xyx^{-1} = \begin{pmatrix}
    -1 & -\tau \\ 0 & -1 
    \end{pmatrix} 
  \end{equation*}
where $\tau = -6/z$ since $\ell$ is conjugate to
\begin{equation*}
    \begin{pmatrix}
    -1 & 6 \\ 0 & -1
    \end{pmatrix}
\end{equation*}
in $\Delta'$. Let us denote by $H_{\tau}$ the totally geodesic hyperplane in $\bbH^3$ stabilized by $\Delta$. We note that $\ell, xyx^{-1}y^{-1} \in \Delta$ are parabolic isometries fixing $\infty$ and $0$, respectively. Therefore, $H_\tau$ is a vertical hyperplane containing the geodesic $(0,\infty)$. The boundary at infinity of $H_\tau$ is the straight line going through $0$ and $\tau$.  The hyperplane $H_{\tau}$ has boundary slope $0$ because it is stabilized by $\ell$.

\begin{figure}[H]
\centering
\includegraphics[page=1,width=2.3in]{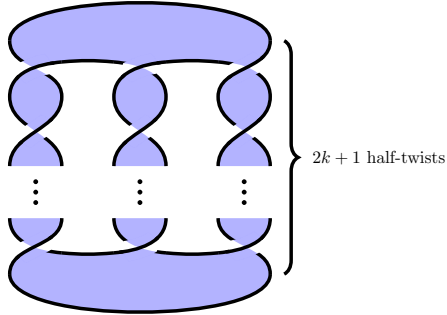}
\caption{The balanced pretzel knot $P(2k+1,2k+1,2k+1)$ with its Seifert surface of genus 1}
\label{fig:pretzel_seifert}
\end{figure}

Similar to the case of $7_4$, we use the trace condition to obtain restrictions on the set of all possible boundary slopes of totally geodesic surfaces in $M$.

\begin{lem} 
\label{lem:pret0}
The complete set of boundary slopes for a cusped totally geodesic surface in the $P(2k+1,2k+1,2k+1)$ balanced pretzel knot complement is $\{0\}$, where $2k+1$ is an odd prime.
\end{lem}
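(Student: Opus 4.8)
The plan is to run the template of \cref{lem:7-4_bdry}, using the single fixed lift $H_\tau$ of the totally geodesic Seifert surface $S$ in place of the two lifts used there; the structural simplification here is that $S$ has only one cusp, so $\partial_\infty H_\tau$ carries a single $\Delta$-orbit of cusp points and only one case needs to be checked. First I would record the arithmetic input: by \cref{cor:PretzelTraceField} the trace field $\bbQ(z)$ has prime degree $2k+1\ge 3$ over $\bbQ$, hence has no proper real subfield besides $\bbQ$, and by \cref{thm:PretzelTraceField} every trace of $\Gamma$ is an algebraic integer. Therefore, for any totally geodesic surface $\Sigma$, the Fuchsian group $\Stab_\Gamma(\widetilde{\Sigma})$ has traces that are real algebraic integers in $\bbQ(z)$, so they lie in $\bbZ$ and \cref{eq:TraceCondition} is available. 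By \cref{cor:PretzelNoClosedTGS} there are no closed totally geodesic surfaces, and by \cref{rem:nonori} (odd trace-field degree) none are non-orientable, so every totally geodesic surface in $M$ is cusped; since $S$ has boundary slope $0$, it suffices to show that a totally geodesic surface $\Sigma\ne S$ cannot have a cusp of nonzero boundary slope.

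Suppose $\Sigma\ne S$ has a cusp of boundary slope $p/q$ with $p\ne 0$, and let $\widetilde{\Sigma}$ be the vertical lift carrying that cusp at $\infty$, stabilized by $s_1^p\ell^q$; its ideal boundary is the affine line of direction $p+q\tau$, where $\tau=-6/z$. Since $H_\tau$ has ideal boundary the line through $0$ of direction $\tau$, since $z\notin\bbR$ (otherwise \cref{eq:PretzelHolonomy} would force $\rho(\Gamma)\subset\PSL_2(\bbR)$), and since $p\ne 0$, these two lines are neither equal nor parallel, so $\widetilde{\Sigma}\cap H_\tau$ is a single vertical geodesic $(\theta,\infty)$, which is nonempty. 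Applying \cref{lem:IntersectionsOfTGS} to $\Sigma\ne S$, the image of $(\theta,\infty)$ in $M$ lies on a closed or cusp-to-cusp geodesic; as $\infty$ is a parabolic fixed point it cannot lie on a closed one, so $(\theta,\infty)$ is the lift of a cusp-to-cusp geodesic and in particular $\theta$ is a cusp point of $\Gamma$. As in the corresponding step of \cref{lem:7-4_bdry}, properness of $\Sigma$ and $S$ forces $\theta$ to be a cusp point of $S$; and since $S$ has a single cusp (fixed up to conjugacy by the parabolic $\ell$), we may write $\theta=\gamma(\infty)$ with $\gamma\in\Delta$ not upper triangular. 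Using $\Delta=\begin{pmatrix} z^{-1/2} & 0 \\ 0 & z^{1/2}\end{pmatrix}\Delta'\begin{pmatrix} z^{1/2} & 0 \\ 0 & z^{-1/2}\end{pmatrix}$ with $\Delta'\le\PSL_2(\bbZ)$, the lower-left entry of $\gamma$ then has the form $\delta=Cz$ for some nonzero $C\in\bbZ$.

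Finally I would feed this into \cref{eq:TraceCondition}: if the cusp of $\Sigma$ at $\theta$ has primitive slope $(m,n)$, then $s_1^p\ell^q$ and $\gamma s_1^m\ell^n\gamma^{-1}$ both lie in $\Stab_\Gamma(\widetilde{\Sigma})$, and with $\widetilde{\Sigma}_2=\gamma^{-1}(\widetilde{\Sigma})$ the trace condition reads, after substituting $\delta^2=C^2z^2$, $\tau=-6/z$, $\tau^2=36/z^2$,
\[
C^2\bigl(36\,nq-6(mq+np)\,z+mp\,z^2\bigr)\in\bbZ .
\]
Because $1,z,z^2$ are $\bbQ$-linearly independent (as $2k+1\ge 3$) and $C\ne 0$, this forces $mq+np=0$ and $mp=0$; with $p\ne 0$ we get $m=0$, hence $n=0$, contradicting primitivity of $(m,n)$. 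Thus no cusp has nonzero slope, and the complete set of boundary slopes is $\{0\}$. The step I expect to be the main obstacle is identifying $\theta$ as a cusp point of $S$ — and thereby arranging $\gamma\in\Delta$ so that $\delta=Cz$ — which requires a careful use of the ideal-boundary structure of $H_\tau$ together with properness of the immersed surfaces, exactly as in \cref{lem:7-4_bdry}; once $\delta=Cz$ and $\tau=-6/z$ are in hand, the trace-condition computation collapses immediately.
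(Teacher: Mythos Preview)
Your proposal is correct and follows essentially the same route as the paper's proof: take a vertical lift $\widetilde{\Sigma}$ with putative nonzero slope, intersect it with $H_\tau$ to obtain a vertical cusp-to-cusp geodesic $(\theta,\infty)$, use that $S$ has a single cusp so that $\theta=\gamma(\infty)$ with $\gamma\in\Delta$ having lower-left entry an integer multiple of $z$, and then apply the trace condition with $\tau=-6/z$ to force $mp=0$ and $mq+np=0$. Your write-up supplies a bit more justification in places (explicitly invoking \cref{lem:IntersectionsOfTGS} and explaining why the two vertical ideal-boundary lines actually meet), but the argument is the same as the paper's.
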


\begin{proof}
The lemma holds for the Seifert surface $S$ since the boundary of $S$ is the homological longitude. By Corollary \ref{cor:PretzelNoClosedTGS}, any totally geodesic surface in $M$ must have at least one cusp. Let $\Sigma$ be a totally geodesic surface admitting a non-zero boundary slope $p/q$. There is a vertical lift $\widetilde{\Sigma}$ of $\Sigma$ to $\bbH^3$ that contains $\infty$ as a cusp point with boundary slope $p/q$. This lift intersects $H_\tau$
along a vertical cusp-to-cusp geodesic $(\theta,\infty)$.  It follows that there are nontrivial elements $s_1^p \ell^q$ and $\gamma s_1^m \ell^n \gamma^{-1}$ in $\Stab_{\Gamma} (\widetilde{\Sigma})$ where $\gamma \in \Gamma$ has the property that $\gamma^{-1}(\theta) = \infty$. Since $S$ has only one cusp and $\theta$ is a cusp point of $H_\tau$, we may choose $\gamma \in \Delta$. Since $\Delta$ is a conjugate of $\Delta'$ by the matrix
\begin{equation*}
    \begin{pmatrix}
    z^{-1/2} & 0 \\ 
    0 & z^{1/2}
    \end{pmatrix}
\end{equation*}
we may assume that $\gamma$ has the form
\begin{equation*}
\gamma = \begin{pmatrix} * & * \\ \delta z & * \end{pmatrix}
\end{equation*}
where $\delta \in \bbZ$. The cusp points $\theta$ and $\infty$ are distinct, so $\gamma$ cannot fix $\infty$.  Consequently, $\delta \neq 0$. 

By Corollary \ref{cor:PretzelTraceField}, the trace field of $\Gamma$ has prime degree $2k+1$ over $\bbQ$ and therefore contains no proper real subfield besides $\bbQ$. Since $\Gamma$ has integral traces, the traces of $\pi_1(\Sigma)$ must be contained in $\bbZ$. Applying the trace condition, we have
  \begin{align*}
  \tr(s_1^p \ell^q \gamma s_1^m \ell^n \gamma^{-1})
  & \in \bbZ
  \\
  \delta^2 (-nq +(np + mq)z -mpz^2)
  & \in \bbZ
  \end{align*}
since $z \tau = -6$. We now compare the coefficients of the powers of $z$, all of which must be equal to $0$ except in the case of the constant term. Since $(p,q) \neq (0,0) \neq (m,n)$, we can manually check that $p=m=0$ is the only solution.
\end{proof}

As a consequence of Lemma \ref{lem:pret0}, Lemma \ref{lem:IntersectionsOfTGS} and Corollary \ref{cor:PretzelNoClosedTGS}, if $2k+1$ is a prime and $\Sigma$ is a totally geodesic surface of $M$ that is not isotopic to $S$, then any vertical lift $\widetilde{\Sigma}$ of $\Sigma$ to $\bbH^3$ must be parallel to vertical lifts of $S$ to $\bbH^3$. Therefore, $\Sigma$ and $S$ can only be disjoint or intersect each other along a union of closed geodesics. We first prove that $\Sigma$, if exists, must indeed intersect $S$ along a union of closed geodesics. To this end, we study the geometric configuration formed by a finite collection of lifts of $S$ to $\bbH^3$.  
We consider the following elements of $\Gamma$ for $0\leq j \leq 2k$:
\[
g_j = \begin{cases}
(s_2s_1^{-1})^{(j-1)/2}s_2 & j \text{ odd}\\
(s_2s_1^{-1})^{j/2} & j \text{ even}
\end{cases}, 
\hspace{4mm}
h_j = \begin{cases}
(s_1s_3^{-1})^{(j+1)/2} & j \text{ odd}\\
(s_1s_3^{-1})^{j/2}s_1 & j \text{ even}
\end{cases}.
\]
Let us denote by 
\[
C_j := g_j(H_\tau), \quad D_j :=  h_j(H_\tau)
\]
the lifts of $S$ to $\bbH^3$. We first observe that the collections $C_j$ and $D_j$ are preserved by an order two symmetry of $M$. 

\begin{lem}
\label{lem:PretzelOrderTwoSymmetry}
Let 
\[
\sigma = \begin{pmatrix}
i & i \, \frac{1-z}{z} \\ 
0 & -i
\end{pmatrix}
\]
Then $\sigma$ is an order two rotational matrix about the geodesic between $\displaystyle\frac{z-1}{2z}$ and $\infty$ which induces an isometry on $M$ such that
\[
\sigma C_j = D_j.
\]
\end{lem}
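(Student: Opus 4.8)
The statement has two parts: first, that the explicit matrix $\sigma$ is an order-two rotation about the geodesic from $\frac{z-1}{2z}$ to $\infty$; second, that $\sigma$ descends to an isometry of $M$ and carries each lift $C_j$ to the corresponding lift $D_j$. The first part is a routine matrix computation: I would verify $\sigma^2 = I$ in $\PSL_2(\bbC)$ (so $\tr\sigma = 0$, which is immediate since the diagonal entries are $i$ and $-i$), compute the two fixed points of $\sigma$ on $\partial_\infty\bbH^3$ by solving $\sigma(\xi) = \xi$, and observe that $\infty$ is visibly fixed while the other fixed point is $\frac{z-1}{2z}$. This identifies the axis as the vertical geodesic $\left(\frac{z-1}{2z},\infty\right)$.

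\textbf{Key steps.} For the second part, the heart of the matter is to show $\sigma$ normalizes $\Gamma$, i.e.\ $\sigma\Gamma\sigma^{-1} = \Gamma$; then by Mostow rigidity $\sigma$ induces an isometry of $M$. The plan is to compute the conjugates $\sigma\rho(s_i)\sigma^{-1}$ for $i = 1,2,3$ and express each as a word in the $\rho(s_i)$ (using the presentation in \cref{eq:BalancedPretzelKnotGroup} and the matrix formulas in \cref{eq:PretzelHolonomy}). I expect $\sigma$ to realize the obvious order-two symmetry of the pretzel knot $P(2k+1,2k+1,2k+1)$ that reflects the diagram, swapping two of the tangles and reversing orientation of the third; concretely I anticipate something like $\sigma s_1 \sigma^{-1}$ being conjugate to $s_2$ (or $s_1^{-1}$) and a corresponding matching for $s_2, s_3$, up to conjugation. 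Once the action of $\sigma$ on the generators is pinned down, the identity $\sigma C_j = D_j$ reduces to checking $\sigma g_j(H_\tau) = h_j(H_\tau)$, which in turn follows if $\sigma g_j \sigma^{-1}$ and $h_j$ differ by an element of $\Delta = \mathrm{Stab}_\Gamma(H_\tau)$, since $\sigma$ fixes $\infty$ and hence preserves $H_\tau$ itself (so $\sigma g_j(H_\tau) = (\sigma g_j \sigma^{-1})\sigma(H_\tau) = (\sigma g_j \sigma^{-1})(H_\tau)$). Using the definitions of $g_j$ and $h_j$ in terms of $(s_2 s_1^{-1})$ and $(s_1 s_3^{-1})$ and the identities from \cref{lem:PrefixSurfixOfVW}, this should come down to a finite number of word identities plus an induction on $j$ matching the odd and even cases.

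\textbf{Main obstacle.} The delicate point is producing the precise word for $\sigma\rho(s_i)\sigma^{-1}$ in terms of $\Gamma$'s generators — the matrix conjugation is mechanical, but recognizing the resulting matrix (whose entries involve $z$ and $\frac{1-z}{z}$) as $\rho$ of a specific short word requires either a lucky guess informed by the knot's visible symmetry or a systematic search, and then the relation in $\Gamma$ must be invoked to confirm it. A secondary technical nuisance is bookkeeping the $\pm$ signs and the $z^{1/2}$ ambiguities when passing between $\SL_2$ and $\PSL_2$ and when conjugating $\Delta$ to $\Delta'$; I would work in $\PSL_2(\bbC)$ throughout to sidestep the sign issues. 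Once the generator-level action is established, everything else is a finite verification, so I expect the conjugation identity for the $s_i$ to be the only genuinely substantive step.
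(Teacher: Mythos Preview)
Your overall strategy matches the paper's proof: verify $\sigma^2=I$ in $\PSL_2(\bbC)$, find the fixed points, check that $\sigma$ normalizes $\Gamma$ by conjugating each generator, and then deduce $\sigma C_j=D_j$ from the resulting word identities. The paper carries this out with the explicit formulas
\[
\sigma s_1\sigma^{-1}=s_1^{-1},\qquad \sigma s_2\sigma^{-1}=s_1 s_3^{-1}s_1^{-1},\qquad \sigma s_3\sigma^{-1}=s_1 s_2^{-1}s_1^{-1},
\]
from which one reads off directly that $\sigma g_j\sigma^{-1}=h_j s_1^{-1}$ for every $j$ (no induction needed, just substitute into the definitions of $g_j,h_j$).

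There is, however, one genuine slip in your plan. You assert that ``$\sigma$ fixes $\infty$ and hence preserves $H_\tau$ itself.'' Fixing $\infty$ only guarantees that $\sigma$ sends vertical hyperplanes to vertical hyperplanes; it does not force $\sigma(H_\tau)=H_\tau$. In fact $\sigma$ acts on $\bbC$ by $w\mapsto -w+\tfrac{z-1}{z}$, so $\sigma(0)=\tfrac{z-1}{z}$, which does not lie on $\partial_\infty H_\tau=\bbR\tau=\bbR\cdot\tfrac{1}{z}$ (that would force $z-1\in\bbR$). What is true instead is $\sigma(H_\tau)=s_1(H_\tau)$, since $\tfrac{z-1}{z}-1=-\tfrac{1}{z}\in\bbR\tau$. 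Consequently the identity you need is not $\sigma g_j\sigma^{-1}\in h_j\Delta$ but rather $\sigma g_j\sigma^{-1}\cdot s_1\in h_j\Delta$; the paper gets the exact equality $\sigma g_j\sigma^{-1}=h_j s_1^{-1}$, and then
\[
\sigma g_j(H_\tau)=(\sigma g_j\sigma^{-1})\,\sigma(H_\tau)=h_j s_1^{-1}\cdot s_1(H_\tau)=h_j(H_\tau).
\]
Once you correct this, your argument goes through and is essentially the paper's.
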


\begin{proof}
By matrix multiplication, $\sigma^2 = -I_2 \equiv_{\PSL_2(\bbC)} I_2$ which gives us an order $2$ rotation, and its fixed point set is $\{\frac{z-1}{2z},\infty\}$.  To see that $\sigma$ induces a nontrivial isometry of $M$, it is sufficient to show that $\sigma$ normalizes $\Gamma$ and $\sigma \not\in \Gamma$.
We already know that $\sigma \not\in \Gamma$ because $\Gamma$ is torsion-free, so we now verify that $\sigma$ normalizes $\Gamma$ by checking each generator:
\begin{align*}
  \sigma s_1 \sigma^{-1}
  & = \begin{pmatrix} 1 & -1 \\ 0 & 1 \end{pmatrix}
    = s_1 \in \Gamma
  \\
  \sigma s_2 \sigma^{-1}
  & = \begin{pmatrix} z^2-z+1 & -(z-1)^2 \\ z^2 & z^2+z+1 \end{pmatrix}
    = s_1 s_3^{-1} s_1^{-1} \in \Gamma
  \\
  \sigma s_3 \sigma^{-1}
  & = \begin{pmatrix} z^2+1 & -z^2 \\ z^2 & -(z^2-1) \end{pmatrix}
    = s_1 s_2^{-1} s_1^{-1} \in \Gamma
\end{align*}
Using the above calculation, we observe that $\sigma g_j\sigma = h_j s_1^{-1}$. Since $\sigma(H_\tau) = s_1(H_\tau)$, we have
\begin{align*}
    \sigma g_j (H_\tau) = \sigma h_j s_1^{-1} \sigma (H_\tau) = h_j(H_\tau).
\end{align*}
This completes the proof of the lemma.
\end{proof}

We give a description of the configuration of these lifts as follows.
\begin{prop}
\label{prop:PretzelLiftsOfS}
Suppose that $2k+1$ is a prime number. The collection $C_j$ and $D_j$ for $0\leq j \leq 2k$ satisfy the following:
\begin{enumerate}
    \item \label{item:CjDjAreCircles} The collection $\partial_\infty C_j$ and $\partial_\infty D_j$ are all circles for all $0<j\leq 2k$.
    \item \label{item:CjDjTangent} The circles $\partial_\infty  C_j$ and $\partial_\infty  C_{j+1}$ are tangent for all $0 \leq j \leq 2k-1$. Similarly, $\partial_\infty  D_j$ and $\partial_\infty  D_{j+1}$ are tangent for all $0 \leq k \leq 2k-1$. Finally, $\partial_\infty  C_{2k}$ and $\partial_\infty  D_{2k}$ are tangent. 
    \item \label{item:CjIntersections} The tangent point between the circles in $\partial_\infty  C_j$ are
    \[
    \partial_\infty C_j \cap 
    \partial_\infty C_{j+1} = \begin{cases}
    \{g_j(\infty)\} & \text{ if $j$ is odd}\\
    \{g_j(0)\} & \text{ if $j$ is even}
    \end{cases}
    \]
    where $0\leq j \leq 2k$ and $C_{2k+1} := D_{2k}$.
\end{enumerate}
\end{prop}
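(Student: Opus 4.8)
The plan is to compute with the explicit parabolic fixed points of $\Gamma$ and exploit the group-theoretic bookkeeping already set up via $g_j$ and $h_j$. First I would record where the relevant parabolics send $0$ and $\infty$. The hyperplane $H_\tau$ is vertical with boundary line through $0$ and $\tau=-6/z$, stabilized by $\ell$ (fixing $\infty$) and $xyx^{-1}y^{-1}$ (fixing $0$); thus the two cusp orbits of $\partial_\infty H_\tau$ are the $\Delta$-orbits of $0$ and $\infty$. For part~\eqref{item:CjDjAreCircles} I would show that for $0<j\le 2k$, neither $g_j$ nor $h_j$ fixes $\infty$: indeed $g_j$ and $h_j$ are explicit words in $s_1,s_2,s_3$, and using the matrix formulas in \cref{eq:PretzelHolonomy} together with the identities for $P(S_2S_1^{-1})^k$ and $P(S_1S_3^{-1})^k$ from \cref{lem:PrefixSurfixOfVW}, the lower-left entry of $g_j$ (resp.\ $h_j$) is a nonzero multiple of $z$ for $j\ge 1$ (it vanishes only at $j=0$, where $g_0=h_0\cdot$trivial gives $H_\tau$ itself). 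Hence $g_j(H_\tau)$ and $h_j(H_\tau)$ are genuine hemispheres, i.e.\ their visual boundaries are circles.

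For the tangencies in part~\eqref{item:CjDjTangent} and the tangency points in part~\eqref{item:CjIntersections}, the key observation is that consecutive $g_j$'s differ by a \emph{parabolic} generator: from the definitions, $g_{j}^{-1}g_{j+1}$ equals $s_2$ when $j$ is even and $s_1^{-1}$ (or a conjugate parabolic) when $j$ is odd, and likewise for the $h_j$'s with $s_1$ and $s_3^{-1}$. Two hyperplanes $H_\tau$ and $u(H_\tau)$ for $u$ a parabolic fixing a point $p\in\partial_\infty H_\tau$ have visual boundaries tangent at $p$ (the parabolic $u$ fixes the hemisphere $H_\tau$'s boundary line only at $p$, and moves $H_\tau$ off itself while preserving tangency at that fixed point). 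So I would check that $s_2$ (resp.\ the conjugate of $s_1$ appearing) fixes the relevant cusp point lying on $\partial_\infty H_\tau$ — concretely $s_2$ fixes $0$, which is a cusp point of $H_\tau$ — and then translate by $g_j$: $\partial_\infty C_j$ and $\partial_\infty C_{j+1}=g_j(\partial_\infty H_\tau)$ and $g_j(\partial_\infty(g_j^{-1}g_{j+1})(H_\tau))$ are tangent at $g_j(0)$ when $j$ even, at $g_j(\infty)$ when $j$ odd. The $D_j$ statement follows identically, or more cleanly by applying the order-two symmetry $\sigma$ of \cref{lem:PretzelOrderTwoSymmetry}, which sends $C_j$ to $D_j$ and hence carries the chain of tangencies among the $C_j$ to the chain among the $D_j$. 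Finally, the tangency of $\partial_\infty C_{2k}$ and $\partial_\infty D_{2k}$ is the "closing up" of the chain: one checks $g_{2k}^{-1}h_{2k}$ (or $h_{2k}^{-1}g_{2k}$) is again a parabolic fixing a cusp point of $H_\tau$, using the full-length words $v,w$ of \cref{eq:vwPretzel} and the relation $vs_1=s_2v$, so that $C_{2k}$ and $D_{2k}=C_{2k+1}$ meet tangentially, consistent with the convention $C_{2k+1}:=D_{2k}$.

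The main obstacle I expect is the bookkeeping in the last step: verifying that $g_{2k}$ and $h_{2k}$ (which involve the full twist-region words, not just a single short parabolic jump) differ by a parabolic element fixing the correct cusp point on $\partial_\infty H_\tau$, and pinning down exactly which point that is. This is where the prime hypothesis on $2k+1$ enters — it guarantees that $\Delta$ acts on the cusp points of $H_\tau$ with only the two orbits of $0$ and $\infty$ and no unexpected coincidences among the $2k+1$ circles $C_0,\dots,C_{2k}$ (otherwise two non-consecutive circles could coincide or be tangent, breaking the clean chain structure). I would handle this by a direct matrix computation: express $g_{2k}$ and $h_{2k}$ via \cref{lem:PrefixSurfixOfVW}, compute the product $h_{2k}^{-1}g_{2k}$ modulo $\Lambda_k(z)$, and confirm it is conjugate (by a power of $g_{2k}$) to $s_1^{\pm1}$ or $xyx^{-1}y^{-1}$, hence parabolic with the asserted fixed point. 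The rest of the proposition is then a matter of assembling these tangency facts in order.
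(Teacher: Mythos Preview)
Your argument for part~\eqref{item:CjDjAreCircles} contains a genuine gap. Showing that the lower-left entry of $g_j$ is nonzero only proves that $g_j(\infty)\neq\infty$; it does \emph{not} show that $\partial_\infty C_j=g_j(\partial_\infty H_\tau)$ avoids $\infty$. The boundary $\partial_\infty H_\tau$ is an entire line, and $C_j$ is a hemisphere precisely when $g_j^{-1}(\infty)\notin\partial_\infty H_\tau$, i.e.\ when $-d/c\notin\bbR\tau$ where $g_j=\bigl(\begin{smallmatrix}a&b\\c&d\end{smallmatrix}\bigr)$. That is a much stronger condition and does not follow from $c\neq 0$. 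The paper argues differently: if $C_j$ were vertical it would be a vertical lift of the embedded surface $S$, and since $S$ has boundary slope~$0$ every such lift is $s_1^m(H_\tau)$; hence $s_1^{-m}g_j\in\Stab_\Gamma(H_\tau)=\Delta$. Because $\Delta\subset[\Gamma,\Gamma]$ one pins down $m$, and then the trace $\tr((s_2s_1^{-1})^r)$, which has leading term $z^{2r}$ with $0<2r<2k+1=[\bbQ(z):\bbQ]$, cannot be an integer---contradicting $\Delta'\le\PSL_2(\bbZ)$.

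Your treatment of parts~\eqref{item:CjDjTangent}--\eqref{item:CjIntersections} via $g_j^{-1}g_{j+1}\in\{s_2,s_1^{-1}\}$ and the symmetry $\sigma$ is essentially the paper's argument. For the final tangency $\partial_\infty C_{2k}\cap\partial_\infty D_{2k}$, however, the paper avoids the laborious computation of $h_{2k}^{-1}g_{2k}$ that you propose: using $\Lambda_k(z)=\beta_kz^2+(\beta_k-\alpha_k)z+\alpha_k=0$ and \cref{lem:PrefixSurfixOfVW} one computes directly that $g_{2k}(0)=\beta_k/(\alpha_k-z\beta_k)=(z-1)/(2z)$, which is exactly the $\sigma$-fixed point; since $\sigma(C_{2k})=D_{2k}$ this point lies on both boundaries, and tangency (rather than transverse intersection) follows because $S$ is embedded. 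Two smaller corrections: $S$ is a once-punctured torus with a \emph{single} cusp, so $\Delta$ has one orbit of cusp points on $\partial_\infty H_\tau$, not two; and the primality hypothesis is not what rules out coincidences among the $C_j$---that is handled by the degree/trace argument above, which only uses $[\bbQ(z):\bbQ]=2k+1$.
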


\begin{proof}
By Lemma \ref{lem:PretzelOrderTwoSymmetry}, $\sigma(C_j) = D_j$. Since $\sigma$ fixes $\infty$, vertical (resp.~hemispherical) hyperplanes remain vertical (resp.~hemispherical) under $\sigma$.  It thus suffices to prove \ref{item:CjDjAreCircles} for $C_j$. For contradiction, suppose that $g_j(H_\tau) = s_1^m(H_\tau)$ for some $m \in \bbZ$; equivalently $s_1^{-m}g_j \in \Delta$ for some $m \in \bbZ$. It is evident from \eqref{eq:SeifertSurfaceGens} that $\Delta \subset [\Gamma,\Gamma]$ since the generators $s_i$ are all conjugate to each other. This implies that $m=0$ when $j$ is even and $m=1$ when $j$ is odd. After possibly conjugating by $s_1$, we have $\tr((s_2s_1^{-1})^r) \in \bbZ$ for $1\leq r \leq k$. By Lemma \ref{lem:PrefixSurfixOfVW},
\[
\tr((s_2s_1^{-1})^r) = \alpha_r + \delta_r = 2\alpha_r -2\beta_r z +\beta_rz^2.
\]
Using the recursion in \eqref{eq:PrefixSurfixOfV} and the initial conditions for $\alpha_r$ and $\beta_r$, we see that the leading terms of $\alpha_r$ and $\beta_r$ are $z^{2r}$ and $-z^{2r-2}$, respectively. The leading term of $\tr((s_2s_1^{-1})^r)$ is $z^{2r}$ for $1  < r \leq k$. Since $z$ has degree $2k+1$ over $\bbQ$, the trace $\tr((s_2s_1^{-1})^r)$ is not an integer, which contradicts the fact that $s_1^{-m}g_j\in \Delta$. Therefore, $\partial_\infty  C_j$ and $\partial_\infty  D_j$ are all circles for $0<j\leq 2k$. 

To prove \ref{item:CjDjTangent}, we first observe that the parabolic isometry $s_2$ fixes $0\in \partial_\infty(H_\tau) = \partial_\infty C_0$. Since $s_2 \not\in\Delta$, it follows that $\partial_\infty C_1 = \partial_\infty s_2(H_\tau)$ is tangent to $\partial_\infty  C_0$ at $0$. For arbitrary $j$, $C_j$ and $C_{j+1}$ are translate of either $C_0$ and $C_1$ when $j$ is even or $C_0$ and $s_1^{-1}(H_\tau)$ when $j$ is odd. Since $\partial_\infty C_0$ and $\partial_\infty s_1^{-1} H_\tau$ are tangent at $\infty$, the circles $\partial_\infty C_j$ and $\partial_\infty C_{j+1}$ are always tangent. Since $\sigma C_j = D_j$ and $\sigma$ preserves tangency of circles in $\partial_\infty \bbH^3$, we see that $\partial_\infty D_j$ and $\partial_\infty D_{j+1}$ are also tangent. For the tangency between $\partial_\infty C_{2k}$ and $\partial_\infty D_{2k}$, it suffices to show that $\partial_\infty C_{2k} \cap \partial_\infty D_{2k} \neq \emptyset.$ The claim then follows because $C_{2k}$ and $D_{2k}$ are both lifts of an embedded surface $S$. From Lemma \ref{lem:RuleOutOneFactor}, we have
\begin{equation*}
\Lambda_k = \beta_k z^2 + (\beta_k -\alpha_k)z+\alpha_k = 0
\end{equation*}
which, combined with Lemma \ref{lem:PrefixSurfixOfVW}, implies that
\[g_{2k}(0)  = \frac{\beta_k}{\alpha_k-z\beta_k} = \frac{z-1}{2z}.\]
In other words, $\partial_\infty(C_{2k})$ contains the fixed point of $\sigma$. Since $\sigma(C_{2k}) = D_{2k}$, the intersection $\partial_\infty(C_{2k}) \cap \partial_\infty(D_{2k})$ contains $\frac{z-1}{2z}$.
\end{proof}

\begin{remark}
\label{rem:PretGeometricConfigOfCjDj}
Proposition \ref{prop:PretzelLiftsOfS} implies that the collection of circles $\partial_\infty C_j $ and $\partial_\infty D_j$ form a chain of sequentially tangent circles lying in the bounded region in $\bbC \subset \partial_\infty \bbH^3$ defined by $\partial_\infty H_\tau$ and $\partial_\infty s_1(H_\tau)$. Experimentally, we observe that these circles must be either tangent or disjoint from each other for all $1 \leq k \leq 10$ (see Figure \ref{fig:9_35_LiftsOfS} for the case $k=1$). Proposition \ref{prop:PretzelLiftsOfS} does not quite give a proof of this picture. However, we do not need this fact anywhere in our results.

\begin{figure}[H]
    \centering
    \includegraphics[scale=0.7,angle=90]{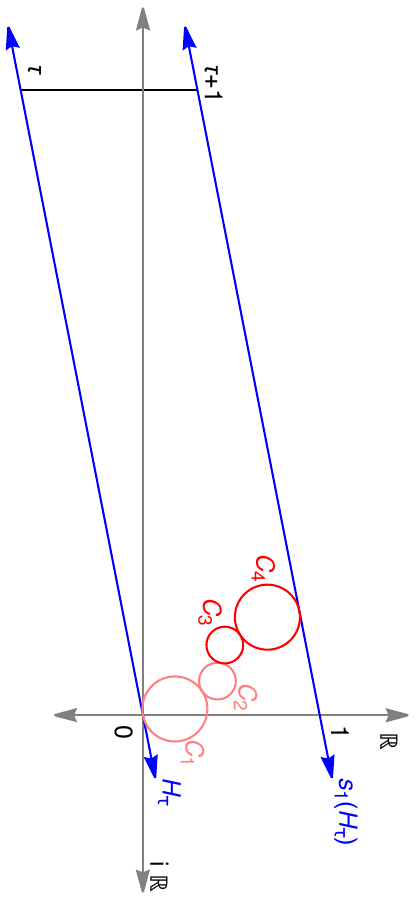}
    \caption{The boundary at infinity of lifts of S to $\bbH^3$ in the pretzel knot $P(3,3,3)$}
    \label{fig:9_35_LiftsOfS}
\end{figure}
\end{remark}

\begin{prop}\label{prop:2pts}
Let $2k + 1$ be a prime number. If $\Sigma$ is a totally geodesic surface in $M$ that is different from $S$, then the intersection $\Sigma \cap S$ is a nonempty union of closed geodesics.
\end{prop}

\begin{proof}
By Corollary \ref{cor:PretzelNoClosedTGS} and Lemma \ref{lem:pret0}, the surface $\Sigma$ has at least one cusp, and the cusps of $\Sigma$ all have boundary slope $0$. In particular, the lifts of $\Sigma$ to $\bbH^3$ at a given cusp point are parallel to those of $S$ at that cusp point. Consequently, $\Sigma \cap S$ cannot contain any cusp-to-cusp geodesic. It remains to show that $\Sigma \cap S$ is not empty. 

Let $\widetilde{\Sigma}$ be a vertical lift of $\Sigma$. Up to the action of $s_1$, we may assume that $\widetilde{\Sigma}$ lies between $H_\tau$ and $s_1(H_\tau)$. In fact, by the previous discussion, $\widetilde{\Sigma}$ is parallel to $H_\tau$ and $s_1(H_\tau)$. By Remark \ref{rem:PretGeometricConfigOfCjDj}, $\partial_\infty(\widetilde{\Sigma}) $ has a nonempty intersection with $\partial_\infty C_j$ or $\partial_\infty D_j$ for some $j$. We want to show that $\partial_\infty(\widetilde{\Sigma})$ intersects one of the circles $\partial_\infty C_j$ and $\partial_\infty D_j$ along two points. If $\partial_\infty \widetilde{\Sigma} \cap \partial_\infty D_j \neq \emptyset$ for some $j$ then $\partial_\infty \sigma(\widetilde{\Sigma}) \cap \partial_\infty C_j\neq \emptyset$ where $\sigma(\widetilde{\Sigma})$ is a lift of a (possibly) different totally geodesic  surface in $M$. Since the number of intersection points in the visual boundary is preserved by $\sigma$, we may assume that $\partial_\infty\widetilde{\Sigma} \cap \partial_\infty C_j \neq \emptyset$ for some $1\leq j \leq 2k$. 

For a contradiction, suppose that $|\partial_\infty \widetilde{\Sigma} \cap\partial_\infty C_j | = 1$ for some $1 \leq j \leq 2k$. By Remark \ref{rem:PretGeometricConfigOfCjDj}, the intersection point $\partial_\infty \widetilde{\Sigma} \cap\partial_\infty C_j$ must be the tangent point between the circles in $\partial_\infty \bbH^3$ described in Proposition \ref{prop:PretzelLiftsOfS}; that is,
\begin{equation*}
\partial_\infty \widetilde{\Sigma} \cap\partial_\infty C_j = \begin{cases}
g_j(\infty) &\text{ if $j$ is odd}\\
g_j(0) &\text{ if $j$ is even}
\end{cases}
\end{equation*}
Let $j$ be odd.  Since $\widetilde{\Sigma}$ has boundary slope $0$ with respect to the coordinates $\{1,\tau\}$, the line between the center of $C_j$ and the tangent point $g_j(\infty)$ must have boundary slope $1/0$; that is, we require
  \begin{equation*}
  \mathrm{cen}(C_j) - g_j(\infty) = it \, \tau
  \end{equation*}
for some $t \in \bbR$.  By direct calculation with $g_j = \begin{pmatrix} a & b \\ c & d \end{pmatrix}$,
  \begin{align*}
  \mathrm{cen}(C_j) - g_j(\infty) 
  & = \dfrac{a \tau \bar{d}-b \bar{c} \bar{\tau}}{c \tau \bar{d}-d \bar{c} \bar{\tau}}
     - \dfrac{a}{c}
  \\
  & = \dfrac{c(a \tau \bar{d}-b \bar{c} \bar{\tau})-a(c \tau \bar{d}-d \bar{c} \bar{\tau})}{c(c \tau \bar{d}-d \bar{c} \bar{\tau})}
  \\
  & = \dfrac{\bar{c} \bar{\tau}}{c}
  \cdot \dfrac{1}{c \tau \bar{d}-d \bar{c} \bar{\tau}}
  \\
  & = \dfrac{1}{c^2 \tau}
  \cdot \dfrac{c \bar{c} \tau \bar{\tau}}{c \tau \bar{d}-d \bar{c} \bar{\tau}}
  \end{align*}
The second fraction is purely imaginary, so it suffices to check when $\frac{1}{c^2 \tau} = t \, \tau$ for some $t \in \bbR$.  Since $\frac{1}{c^2\tau^2} \in \bbQ(z) \setminus \{0\}$, this would imply that $t^2 \in \bbQ(z) \setminus \{0\} $. The number field $\bbQ(z)$ has odd degree over $\bbQ$ and so cannot have an even extension.  Thus $t \in \bbQ(z) \cap \bbR = \bbQ$; equivalently $c \tau \in \bbQ$.  By Lemma \ref{lem:PrefixSurfixOfVW}, 
    \begin{align*}
        c
        & = z^2 \alpha_{(j-1)/2} - z \beta_{(j-1)/2} + z^3 \beta_{(j-1)/2}
        \\
        c \tau
        & = z (z \alpha_{(j-1)/2} - \beta_{(j-1)/2} + z^2 \beta_{(j-1)/2}) \tau
        \\
        & = -6 (z \alpha_{(j-1)/2} - \beta_{(j-1)/2} + z^2 \beta_{(j-1)/2})
    \end{align*}
But $0 < \deg (z \alpha_{(j-1)/2} - \beta_{(j-1)/2} + z^2 \beta_{(j-1)/2}) = j < 2k$, so $\delta \tau$ cannot be real.

An analogous calculation and contradiction can be done for $j$ even.  Therefore for some $j$, the set $\partial_\infty \widetilde{\Sigma} \cap \partial_\infty C_j$ contains two distinct points. In other words, $\Sigma$ and $S$ intersect each other along a nonempty union of closed geodesics. 
\end{proof}

\subsection{Uniqueness of the totally geodesic surface in $P(3,3,3)$}

For this section, let $J$ be the knot $9_{35}$, which is the balanced pretzel knot $P(3,3,3)$.  Let $M = S^3 \setminus J$ and $\Gamma = \pi_1(M)$ with the discrete, faithful representation $\rho:\pi_1(M) \to \PSL_2(\bbC)$ as given in Theorem \ref{thm:PretzelTraceField}.

\begin{figure}[H]
\centering
\includegraphics[width=2in]{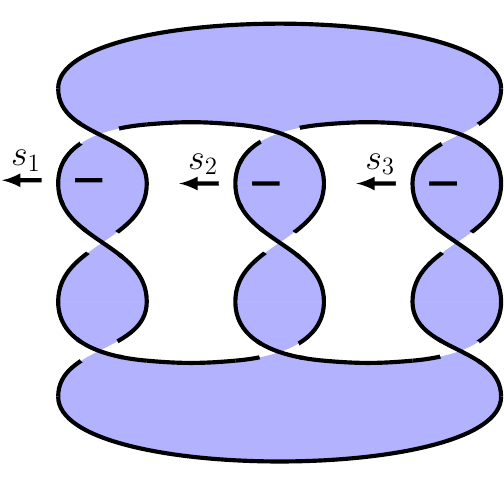}
\caption{Pretzel knot $P(3,3,3)$ with Seifert surface of genus 1}
\label{fig:9-35_seifert} 
\end{figure}

\begin{proof}[Proof of Theorem \ref{t:pret!surf}]
Proposition \ref{prop:2pts} gives the first half of Theorem \ref{t:pret!surf}. We now move onto the proof of uniqueness of the totally geodesic surface $S$ in $M$. This is done by considering possible the intersection between a candidate surface $\Sigma$ with the totally geodesic Seifert surface $S$.

Let $\Sigma \subset M$ be a totally geodesic surface that is different from $S$.  The trace field of $J$ is $\bbQ(z)$ where $z$ has minimal polynomial $z^3 - z^2 + 3z - 1$.  By Proposition \ref{prop:NoClosedTG}, the surface $\Sigma$ cannot be compact, and $\tr(\rho(\pi_1(\Sigma))) \subset \bbZ$ where $\rho$ is the geometric representation of $\Gamma \to \SL_2(\bbC)$.  By Lemma \ref{lem:pret0}, every vertical hyperplane lift of $\Sigma$ must have boundary slope $0$ in coordinates $\{1,\tau\}$.  It is sufficient to show that no such lift $\widetilde{\Sigma}$ exists.

Up to a translation by a power of $s_1$ and a rotation by the element $\sigma$ in Lemma \ref{lem:PretzelOrderTwoSymmetry}, it suffices to show that there is no such lift whose boundary intersects the Euclidean interval $(0,\frac{z-1}{2z})$.  By combining Lemma \ref{lem:IntersectionsOfTGS} and Proposition \ref{prop:2pts}, any such $\widetilde{\Sigma}$ must intersect some $g_j(H_{\tau})$ along the lift of a closed geodesic whose endpoints in $\hat{\bbC} = \partial_{\infty} \bbH^3$ are some $\theta_1$ and $\theta_2$. Recall from the proof of Corollary \ref{cor:PretzelNoClosedTGS} that
\[
\Delta = \begin{pmatrix}
z^{-1/2} & 0 \\ 0 & z^{1/2}
\end{pmatrix}
\Delta' 
\begin{pmatrix}
z^{1/2} & 0 \\ 0 & z^{-1/2}
\end{pmatrix}
\]
where $\Delta'\leq \PSL_2(\bbZ).$ It follows that $\theta_1$ and $\theta_2$ are images of fixed points of hyperbolic elements in $\Delta'$ under 
\[
\gamma' = \begin{pmatrix}
z^{-1/2} & 0 \\ 0 & z^{1/2}
\end{pmatrix}
\]
Since fixed points of hyperbolic elements in $\Delta'$ are quadratic irrationals, there must be some distinct quadratic irrationals $\sigma_1,\sigma_2 \in \bbR$ such that $\theta_1 = g_j \gamma'(\sigma_1)$ and $\theta_2 = g_j \gamma'(\sigma_2)$.
Because $\theta_1,\theta_2 \in \partial_{\infty} \widetilde{\Sigma}$, the $0$ boundary slope requires that $\frac{\theta_2 - \theta_1}{\tau} \in \bbR$; equivalently, $z(\theta_2-\theta_1) \in \bbR$.  These two requirements together means that we must satisfy
  \begin{equation}
  z \left( 
    g_j \big( \frac{\sigma_2}{z} \big) 
  - g_j \big( \frac{\sigma_2}{z} \big)
  \right) \in \bbR
  \end{equation}
for quadratic irrationals $\sigma_1,\sigma_2$.  Recall the minimal polynomial $\Lambda_1(z) = z^3 - z^2 + 3z - 1$ for $z$.

\begin{description}
  \item[Case $j$ odd] If $j$ is odd, then
    \begin{align*}
    z \left( g_j \left(\frac{\sigma_2}{z}\right) 
        - g_j \left( \frac{\sigma_2}{z} \right) \right)
    & = z \left( \dfrac{a_j \sigma_1 + b_j z}{z(a_j + z(b_j-a_j) \sigma_1)} - \dfrac{a_j \sigma_2 + b_j z}{z(a_j + z(b_j-a_j) \sigma_2)} \right)
    \\
    & = \dfrac{\sigma_2-\sigma_1}{(a_j + z(b_j-a_j) \sigma_1)(a_j + z(b_j-a_j) \sigma_2)}.
    \end {align*}
  The numerator is already real, so we need to check whether or not the denominator is real for $j=1$:
    \begin{align*}
    (a_1 + z(b_1-a_1) \sigma_1)(a_1 + z(b_1-a_1) \sigma_2)
    = \sigma_1 \sigma_2 z^2 - (\sigma_1 + \sigma_2) z + 1.
    \end{align*}
  Because $z$ has a cubic minimal polynomial, we must have $\sigma_1 \sigma_2 = \sigma_1 + \sigma_2 = 0$.  There is no real solution for such $\sigma_1$ and $\sigma_2$.  Thus the geodesic is not closed, so the vertical hyperplane $g_1(H_{\tau})$ cannot be the lift of a totally geodesic surface.
  
  \item[Case $j$ even] If $j$ is even, then
    \begin{align*}
    z \left( g_j \left(\frac{\sigma_2}{z}\right) 
        - g_j \left( \frac{\sigma_2}{z} \right) \right)
    & = z \left( \dfrac{a_j \sigma_1 + b_j z}{z(a_j - b_j z^2 + b_j z \sigma_1)} - \dfrac{a_j \sigma_2 + b_j z}{z(a_j - b_j z^2 + b_j z \sigma_2)} \right)
    \\
    & = \dfrac{\sigma_2-\sigma_1}{(a_j - b_j z^2 + b_j z \sigma_1)(a_j - b_j z^2 + b_j z \sigma_2)}.
    \end{align*}
  As in the case above, the numerator is already real, so we need to check whether or not the denominator is real for $j=2$.
    \begin{align*}
    (a_2 & - b_2 z^2 + b_2 z \sigma_1)(a_2 - b_2 z^2 + b_2 z \sigma_2) \\
    & = \, z^4 - (\sigma_1+\sigma_2) + z^2(\sigma_1\sigma_2) + 1
    \\
    & = \, z^2 + 2z(\sigma_1+\sigma_2) + 2-(\sigma_1+\sigma_2) + \big( z+1-(\sigma_1 + \sigma_2) \big) \Lambda_1(z).
    \end{align*}
  This expression is quadratic modulo $\Lambda_1(z)$.  Because $z$ is cubic, this cannot be real.  Thus the geodesic is not closed, so the vertical hyperplane $g_2(H_{\tau})$ cannot be the lift of a totally geodesic surface.
\end{description}
Therefore, there is no totally geodesic surface in the $P(3,3,3)$ knot complement other than the aforementioned Seifert surface.
\end{proof}

The above proof techniques can be applied to $P(2k+1,2k+1,2k+1)$ for higher $2k+1$ odd prime, but the use of $\Lambda_k(z)$ becomes increasingly cumbersome as $k$ and $j$ grow.  We have, however, verified that the theorem and its proof hold for $P(5,5,5)$, $P(7,7,7)$, and $P(11,11,11)$.


\section{Seifert surfaces and the Euler class obstruction}
\label{sec:SeifertSurfacesEulerClassObstructions}

\subsection{Calculating the Euler class}
\label{subsec:CalculatingEulerClass}

Let $M$ be a hyperbolic knot complement. Suppose that the trace field of $M$ admits a real place. Then there exists $\rho:\pi_1(M) \to \PSL_2(\bbR)$ which is a Galois conjugate of the geometric representation of $\pi_1(M)$. Associated to the aforementioned representation, there exists a relative Euler class $e_\rho \in H^2(M,\partial M;\bbZ)$ which defines a norm on $H_2(M,\partial M;\bbZ)$. Let $F$ be a Seifert surface of $M$ and $[F]$ be the homology class of $F$ in $H_2(M,\partial M;\bbZ)$. Since $H_2(M,\partial M;\bbZ)\cong \bbZ$, the norm $e_\rho$ on $H_2(M,\partial M;\bbZ)$ is determined by $e_\rho([F])$. On the one hand, as discussed in Section \ref{subsec:EulerClass}, the representation $\rho$ lifts to a representation $\widetilde{\rho}: \pi_1(M) \to \widetilde{\PSL}_2(\bbR)$. On the other hand, the canonical section of $E_\rho|_{\partial M}$ determines a canonical section $s$ of $\rho|_{\pi_1(\partial M)}$ defined in \eqref{eq:CanSection}. The elements $s(\ell)$ and $\widetilde{\rho}(\ell)$ differ by a central element $c^n$ where $c$ is a generator of the center of $\widetilde{\PSL}_2(\bbR)$. Then $e_\rho([F]) = \pm n$.  

We have a group isomorphism $\psi: \PSL_2(\bbR) \to \PSU(1,1)$ given by
\[
\begin{pmatrix}
a & b \\ c & d\\
\end{pmatrix} \mapsto
\frac{1}{2}\begin{pmatrix} 
a+d+(b-c)i & a-d-(b+c)i \\ a-d-(b-c)i & a+d-(b-c)i
\end{pmatrix}
\]
The group $\PSU(1,1)$ has the following descriptions. As a quotient of a matrix group, $\PSU(1,1)$ is 
\[
\left\{\begin{pmatrix}
\alpha & \beta \\ \overline{\beta} & \overline{\alpha} 
\end{pmatrix} ~\middle|~ |\alpha|^2 - |\beta|^2 = 1\right\} / \{\ \pm I \}.
\]
Setting $\gamma = \overline{\beta}/\alpha$ and $\omega = \arg(\alpha) \mod \pi$, we get a second description of $\PSU(1,1)$ as an open solid torus
\[
  \left\{
  (\gamma,\omega) \in \bbC \times \bbR
  \mid|
  |\gamma|<1, 0\leq \omega \leq \pi \right\}
  /(\gamma,0) \sim (\gamma,\pi)
\]
with the group multiplication
\begin{equation}
\label{eq:GroupLawUniCovPSL2R}
  \begin{array}{r@{~}l}
    (\gamma_1,\omega_1) \cdot (\gamma_2,\omega_2)
  = & \left(
  \dfrac{\gamma_1 + \gamma_2 \exp(-2 i \, \omega_1)}{1 + \gamma_2 \overline{\gamma_1} \exp(-2 i \, \omega_1)}, 
  \,
  \omega_1 + \omega_2 
  \right.
  \\
  & \left. 
  \quad
  +  \dfrac{1}{2i} \log \left( 
   \dfrac{1 + \gamma_2 \overline{\gamma_1} \exp(-2 i \, \omega_1)}{1 + \gamma_1 \overline{\gamma_2} \exp(2 i \, \omega_1)}
   \right)\mod \pi
   \right)
   \end{array}
\end{equation}
where $\overline{\gamma}$ denotes the usual complex conjugate of $\gamma$ and $\log z$ is defined with the branch cut along $(-\infty,0]$.  

From the latter description of $\PSU(1,1)$, we can see that the universal covering group $\widetilde{\PSL}_2(\bbR) = \widetilde{\PSU}(1,1)$ can be described as an infinite open cylinder
\[
  \left\{ (\gamma,\omega) \in \bbC \times \bbR \mid| |\gamma|<1, -\infty < \omega < \infty \right\}
\]
with the group multiplication given by \eqref{eq:GroupLawUniCovPSL2R} where the second coordinate is no longer computed modulo $\pi$. The center of $\widetilde{\PSL}_2(\bbR)$ is the subgroup generate by $c = (0,\pi)$ which is given explicitly by
\[
\{(0,k \pi)\mid k \in \bbZ \}
\]
Up to conjugation, let us assume that $\rho(\pi_1(\partial M))$ is contained in the parabolic subgroup of $\PSL_2(\bbR)$ fixing $\infty \in \partial_\infty\bbH^2$. Let $\ell$ be the longitude of the knot. If the image $\rho(\ell)$ itself is a matrix \[
\begin{pmatrix} -1 & -\tau \\ 0 & -1 \end{pmatrix},
\]
then under the canonical section
\[
s(\ell) = \left(
     \dfrac{i \, \tau}{2+i \, \tau}, \arctan \left( \dfrac{\tau}{2} \right)
  \right).
\]
On the other hand, the representation $\widetilde{\rho}$ can be obtained by lifting the image under $\rho$ of the generators of the knot group $\Gamma$ to $\widetilde{\PSL}_2(\bbR)$. Therefore given $\ell$ as a word in the generators of $\Gamma$, we can compute $\widetilde{\rho}(\ell)$ which is well-defined by Lemma \ref{lem:LiftingLongitudeIsWellDefined}. Comparing $\widetilde{\rho}(\ell)$ and $s(\ell)$, we get $e_\rho([F])$. We demonstrate this computation with an example below.

\begin{example}
\label{exa:TwoBridgeComputation}
    Let $M$ be the complement of the knot $7_3$ in the Rolfsen table. The knot $7_3$ is a two-bridge knot that corresponds to the fraction $13/9$. The knot group $\Gamma$ has the presentation 
    \[
    \langle a,b \mid a w = wb \rangle,
    \]
    where $w = ba^{-1}bab^{-1}aba^{-1}bab^{-1}a$. The longitude that commutes with $a$ is $\ell = wva^{-8}$ where $v$ is $w$ spelled backwards. The discrete faithful representation of $\Gamma$ is given by 
    \[
    a \mapsto \begin{pmatrix}
    1 & 1 \\ 0 & 1
    \end{pmatrix}, \quad 
    b \mapsto \begin{pmatrix}
    1 & 0 \\ z & 1
    \end{pmatrix}
    \]
    where $z$ is a complex root of $\Lambda (z) = z^6-5z^5+9z^4-4z^3-6z^2+5z+1$. The trace field of $\Gamma$ is $\bbQ(z)$ which has two real places and no proper subfield except for $\bbQ$. Let us order the real places of $\bbQ(z)$ by the order of the real roots of $\Lambda$ in $\bbR$. Let $\rho_1$ and $\rho_2$ be the corresponding Galois conjugates of the discrete faithful representation. The lifts $\widetilde{\rho}_i$ can be chosen to be
    \[
        \widetilde{\rho}_i(a) = \left(\frac{1}{5} + \frac{2}{5}i,\arctan\left(\frac{1}{2}\right)\right),
        \hspace{6mm}
        \widetilde{\rho}_i(b) = \left(\frac{zi}{2-zi},-\arctan\left(\frac{z}{2}\right)\right).
    \]
    Comparing $\widetilde{\rho}_i(\ell)$ and $s(\ell)$, we get
    \[
    \widetilde{\rho}_1(\ell) = s(\ell)c^3, \quad \widetilde{\rho}_2(\ell) = s(\ell)c.
    \]
    Therefore, $e_{\rho_1}([F]) = 3$ and $e_{\rho_2}([F]) = 1$. Since the genus of the knot $7_3$ is 2 \cite{knotinfo}, the Thurston norm of $[F]$ is $\Vert[F]\Vert = 3$. The knot $7_3$ satisfies the hypothesis of Theorem \ref{thm:ModifiedCalegari} and thus has no totally geodesic surfaces.
\end{example}

\subsection{Applications}

To prove Theorem \ref{thm:TGSKnotsUnderNineCrossings}, we checked that the knots in \eqref{eq:NoTGSKnots} satisfy the conditions in Theorem \ref{thm:ModifiedCalegari}. We made some preliminary observations to narrow the list of knots to which Theorem \ref{thm:ModifiedCalegari} can be applied. Using SnapPy \cite{SnapPy}, we found $54$ knots, among the $79$ hyperbolic knots with at most $9$ crossings, whose trace field has at least one real place. These 54 knots are:
\begin{equation}
    \label{eq:54Knots}
    \begin{aligned}
          \{
          & 5_2, 6_2, 7_2, 7_3, 7_4, 7_5, 7_6, 8_2, 8_4, 8_5, 8_6, 8_7, 8_{10}, 8_{14}, 8_{15}, 8_{16},
          8_{18}, 8_{20}, \\
          & 9_2, 9_3, 9_4,
          9_5, 9_6, 9_7,
          9_8, 9_9, 9_{10}, 9_{11}, 9_{12}, 9_{13}, 9_{15}, 9_{16}, 9_{17}, 9_{18}, 9_{20}, 9_{21}, \\
          & 9_{22}, 9_{23}, 9_{24}, 9_{25}, 9_{26}, 9_{29}, 9_{31}, 9_{32}, 9_{34}, 9_{35}, 9_{36}, 9_{38},
          9_{39}, 9_{42},
          9_{43}, 9_{45}, 9_{48}, 9_{49}
          \}
    \end{aligned}
\end{equation}

There is exactly one knot listed above whose trace field contains a subfield other than $\bbQ$. The trace field of the knot $8_{18}$ is $\bbQ(\alpha)$ where $\displaystyle \alpha = \frac{1}{2}\left(1-i\sqrt{-5+4\sqrt{2}}\right)$ has minimal polynomial
\(
x^4 -2x^3-x^2+2x-1
\).
The field $\bbQ(\alpha)$ contains $\bbQ(\sqrt{2})$ as a subfield and therefore does not satisfy the condition in Theorem \ref{thm:ModifiedCalegari}. 

Knots with genus one cannot satisfy the inequality $e_\rho([F]) < \Vert[F]\Vert$ on $H_2(M,\partial M;\bbZ)$. Without loss of generality, let $F$ be the genus one Seifert surface of $M$. By the Milnor-Wood inequality, we have
\[
|e_\rho([F])| \leq 1.
\]
Therefore, $|e_\rho([F])| = 1 = \Vert[F]\Vert$. Using the data from \cite{knotinfo}, we can identify genus one knots in \eqref{eq:54Knots}. As a consequence, Theorem \ref{thm:ModifiedCalegari} does not apply to the knots 
\[
\{5_2, 7_2, 7_4, 9_2, 9_5, 9_{35}\} 
\]
in \eqref{eq:54Knots}. The complements of the knots $5_2, 7_2, 7_4, 9_2$ and $9_{35}$ are known to contain a unique totally geodesic surface. 

Finally, \cite[Corollary 4.6]{C06} applies to fibered knot complements. In particular, the conditions in  Theorem \ref{thm:ModifiedCalegari} hold at all real places of the trace field of the knot. Therefore, we can rule out these knots as well. The list of knots for which we need to check the inequality between the Euler class and the Thurston norm is now narrowed down to the following $24$ knots:
\begin{equation}
\label{eq:24Knots}
    \begin{aligned}
          \{
          & 7_3, 7_5, 8_4, 8_6, 8_{14}, 8_{15}, 9_3, 9_4, 9_6, 9_7, 9_8, 9_9,  \\
          & 9_{10}, 9_{12}, 9_{13}, 9_{15}, 9_{16}, 9_{18}, 9_{21}, 9_{23}, 9_{25}, 9_{38}, 9_{39}, 9_{49}\}
    \end{aligned}
\end{equation}
We computed $e_{\rho}([F])$ where $\rho$ is a Galois conjugate of the discrete faithful representation to $\PSL_2(\bbR)$ and $F$ is a Seifert surface in the knot complement using the procedure outlined in Section \ref{subsec:CalculatingEulerClass}. 

We make some comments about the procedure of the computation for the benefit of interested readers. When the knot is a two-bridge knot, the discrete faithful representation of the knot group has a nice formula, as demonstrated in Example \ref{exa:TwoBridgeComputation}. The image of this representation already has entries in the trace field which is convenient for the purpose of computing $\rho$. For knots that are not two-bridge, to obtain $\rho$, we computed a discrete faithful representation using SnapPy \cite{SnapPy}. Note that the image of this representation can lie in a degree two extension of the trace field \cite{MR03}. This happens for the knot $9_{16}$. After a further conjugation, we obtained a discrete faithful representation of the knot group with entries in the trace
field from which $\rho$ can be computed. We lift $\rho$ to obtain $\widetilde{\rho}$ by picking pre-images of the generators of $\text{Im}(\rho)$ so that the group relations are satisfied. Finally to obtain $e_\rho(F)$, we used numerical approximations in order to reduce the computing time. Since the only possible values of $e_\rho(F)$ are $\bbZ$, a discrete set of $\bbR$, the results are reliable.      

The result is reported in Table \ref{tab:EulerClassOfKnots}. In this table, the trace field is viewed as a simple extension of $\bbQ$ by $\alpha$ whose minimal polynomial $m_\alpha$ is recorded in the corresponding column. The values of $e_\rho(F)$ at different real places of the trace field is given as a tuple which is ordered according to the ordering of the real roots of $m_\alpha$ in $\bbR$.  

\begin{proof}[Proof of Theorem \ref{thm:TGSKnotsUnderNineCrossings}]
Among the knots in \eqref{eq:NoTGSKnots}, the following 23 knots
\begin{equation*}
    \begin{aligned}
          \{
             & 6_2, 7_6, 8_2, 8_5, 8_7, 8_{10}, 8_{16}, 8_{20}, \\
             & 9_{11}, 9_{17}, 9_{20}, 9_{22}, 9_{24}, 9_{26}, 9_{29}, 9_{31}, \\
             & 9_{32}, 9_{34}, 9_{36}, 9_{42}, 9_{43}, 9_{45}, 9_{48} 
          \}
    \end{aligned}    
\end{equation*}
are fibered knots \cite{knotinfo}. The complement of these knots does not contain any totally geodesic surface by \cite[Corollary 4.6]{C06}. The remaining $24$ knots satisfy the condition of Theorem \ref{thm:ModifiedCalegari} (see Table \ref{tab:EulerClassOfKnots}) and therefore do not contain any totally geodesic surfaces. 
\end{proof}

\begin{table}[H]
    \centering
    \footnotesize
    \begin{tabular}{|c|c|c|p{0.6\linewidth}|c|}
    \hline
        \bfseries Knot & \bfseries Genus & \bfseries $p/q$ & \bfseries Trace Field & \bfseries $e_\sigma(F)$\\
       \hline \hline
       $7_3$  & 2 & $13/9$ &$ x^6-5 x^5+9 x^4-4 x^3-6 x^2+5 x+1$ & (3,1)\\
       \hline
       $7_5$  & 2 & $17/7$ & $x^8+3 x^7+7 x^6+10 x^5+11 x^4+10 x^3+6 x^2+4 x+1$ & (3,1)\\
       \hline \hline
       $8_4$  & 2 & $19/5$ &$x^9+9 x^8+32 x^7+55 x^6+45 x^5+19 x^4+16 x^3+10 x^2-3 x+1$ & (1)\\
       \hline
       $8_6$  & 2 & $23/13$ &$x^{11}-3 x^{10}+10 x^9-19 x^8+32 x^7-40 x^6+40 x^5-29 x^4+15 x^3-x^2-2 x+1$ & $(-1)$\\
       \hline
       $8_{14}$ & 2 & $31/19$ &$ x^{15}-7 x^{14}+26 x^{13}-63 x^{12}+110 x^{11}-146 x^{10}+156 x^9-143 x^8+118 x^7-86 x^6+52 x^5-26 x^4+12 x^3-4 x^2+1$ & $(-1)$\\
       \hline
       $8_{15}$ & 2 & N/A & $x^7 - x^5 -2x^4+2x^3+x^2-2$ & $(-1)$ \\
       \hline \hline
       $9_{3}$ & 3 & $19/13$ & $x^9-7 x^8+20 x^7-25 x^6+x^5+31 x^4-24 x^3-6 x^2+9 x+1$ & $(5,3,1)$\\
       \hline
       $9_{4}$ & 2 & $21/5$ & $x^{10}+11 x^9+49 x^8+112 x^7+140 x^6+107 x^5+79 x^4+56 x^3+15 x^2+7 x+1$ & $(3,1)$\\
       \hline
       $9_{6}$ & 3 & $27/5$ & $x^{12}+16 x^{11}+108 x^{10}+399 x^9+876 x^8+1161 x^7+891 x^6+317 x^5-43 x^4-71 x^3-13 x^2+6 x+1$ & (5,1) \\
       \hline
       $9_{7}$ & 2 & $29/13$ & $x^{14}+3 x^{13}+13 x^{12}+28 x^{11}+62 x^{10}+97 x^9+137 x^8+152 x^7+142 x^6+106 x^5+62 x^4+32 x^3+12 x^2+5 x+1$ & (3,1)\\
       \hline
       $9_{8}$ & 2 & $31/11$ & $x^{15}+9 x^{14}+38 x^{13}+93 x^{12}+134 x^{11}+86 x^{10}-56 x^9-167 x^8-130 x^7-6 x^6+60 x^5+38 x^4+4 x^3-4 x^2+1$ & $(1)$ \\
       \hline
       $9_{9}$ & 3 & $31/9$ & $x^{15}+13 x^{14}+74 x^{13}+237 x^{12}+450 x^{11}+462 x^{10}+104 x^9-295 x^8-254 x^7+74 x^6+168 x^5+18 x^4-52 x^3-12 x^2+8 x+1$ & $(5,3,1)$\\
       \hline
       $9_{10}$ & 2 & $33/23$ & $x^6-6 x^5+13 x^4-9 x^3-6 x^2+8 x+1$ & $(3,1)$\\
       \hline
       $9_{12}$ & 2 & $35/13$ & $x^{17}+9 x^{16}+40 x^{15}+111 x^{14}+209 x^{13}+271 x^{12}+232 x^{11}+106 x^{10}-9 x^9-37 x^8+4 x^7+38 x^6+30 x^5+2 x^4-8 x^3-4 x^2+x+1 $ & $(1)$ \\
       \hline
       $9_{13}$ & 2 & $37/27$ & $x^{18}-17 x^{17}+129 x^{16}-572 x^{15}+1628 x^{14}-3073 x^{13}+3843 x^{12}-3136 x^{11}+1769 x^{10}-1011 x^9+683 x^8-236 x^7-22 x^5-18 x^4+24 x^3+5 x^2+7 x+1$ & $(3,1)$\\
       \hline
       $9_{15}$ & 2 & $39/23$ & $x^{19}-7 x^{18}+30 x^{17}-91 x^{16}+216 x^{15}-420 x^{14}+688 x^{13}-973 x^{12}+1201 x^{11}-1311 x^{10}+1270 x^9-1093 x^8+836 x^7-560 x^6+328 x^5-162 x^4+65 x^3-19 x^2+2 x+1$ & $(1)$ \\
       \hline
       $9_{16}$ & 3 & N/A & $x^8 - x^7 - 4x^6 + 3x^5 + 5x^4 - x^3 - x^2 - 3x - 1$ & $(1,-5)$\\
       \hline
       $9_{18}$  & 2 & $41/17$ & $x^{20}+7 x^{19}+31 x^{18}+98 x^{17}+245 x^{16}+504 x^{15}+876 x^{14}+1312 x^{13}+1708 x^{12}+1951 x^{11}+1959 x^{10}+1730 x^9+1343 x^8+908 x^7+536 x^6+272 x^5+119 x^4+47 x^3+15 x^2+6 x+1$ & $(3,1)$ \\
       \hline
       $9_{21}$  & 2 & $43/25$ & $x^{21}-7 x^{20}+32 x^{19}-105 x^{18}+275 x^{17}-595 x^{16}+1092 x^{15}-1728 x^{14}+2376 x^{13}-2856 x^{12}+3000 x^{11}-2745 x^{10}+2173 x^9-1465 x^8+828 x^7-380 x^6+139 x^5-45 x^4+16 x^3-9 x^2+3 x+1$ & $(1)$\\
       \hline
       $9_{23}$  & 2 & $45/19$ & $x^5+x^4+4 x^3+2 x^2+4 x+1$ & $(1)$\\
       \hline
        $9_{25}$  & 2 & N/A & $x^{25} - 12x^{24} + 159x^{23} - 1141x^{22} + 7777x^{21} - 31289x^{20} + 117521x^{19} - 195155x^{18} + 629488x^{17} + 450445x^{16} + 778646x^{15} + 29361902x^{14} + 94077682x^{13} + 233504902x^{12} + 985758882x^{11} + 3611707834x^{10} + 8911946417x^9 + 15351955982x^8 + 19428268443x^7 + 18502158935x^6 + 13417536797x^5 + 7302123139x^4 + 2904674429x^3 + 750346185x^2 + 99946478x - 13136479$ & $(-1)$\\
       \hline
       $9_{38}$  & 2 & N/A & $x^{11} - x^{10} + 3x^9 - 6x^8 + 7x^7 - 10x^6 + 11x^5 - 9x^4 + 9x^3 - 3x^2 + 3x - 1$ & $(-1)$ \\
       \hline
        $9_{39}$  & 2 & N/A & $x^{11} - 5x^{10} + 16x^9 - 33x^8 + 53x^7 - 62x^6 + 58x^5 - 38x^4 + 19x^3 - 5x^2 + x - 1$ & $(-1)$ \\
       \hline
       $9_{49}$  & $2$ & N/A & $x^3-x-1$ & $(-1)$\\
       \hline
    \end{tabular}
    \caption{Euler number of Galois conjugate of knots with less than 9 crossings}
    \label{tab:EulerClassOfKnots}
\end{table}

\section{Further questions}
\label{sec:Questions}

A common restriction in the existing techniques for studying totally geodesic surfaces is to require that the trace field arising the Fuchsian subgroups is rational (see Proposition \ref{prop:NoClosedTG} and Theorem \ref{thm:ModifiedCalegari}). To appease this restriction, a sensible step is:

\begin{quest}
Find an obstruction to the existence of totally geodesic surfaces in the Kleinian group whose trace field contains proper subfields other than $\bbQ$.  
\end{quest}

There are concrete examples of an infinite family of hyperbolic knots whose complement contains a totally geodesic surface with trace field properly containing $\bbQ$. For example, consider the balanced pretzel knots with $m$-tangles and $n=2k+1$ half twists in each tangle. As shown in \cite[Example 3.1]{AS05}, the totally geodesic Seifert surface in these examples is an $m$-fold cover of the $(m,m,\infty)$ triangle group. Therefore, the trace field of these Fuchsian subgroups is $\bbQ(\cos(\pi/m))$ by \cite[Proposition 2]{Tak77}. Taking $m$ to be a odd integer greater than $3$, we get infinitely many hyperbolic knots whose complement contains a totally geodesic surface with trace field properly containing $\bbQ$. It is natural to ask

\begin{quest}
Is the totally geodesic Seifert surface the only totally geodesic surfaces in the complement of the $m$-tangle balanced pretzel knot?
\end{quest}

We note that the techniques in our paper only apply to the case $m=3$, that is, for the knots $P(n,n,n)$. Moreover even in this case, we can only prove uniqueness of the totally geodesic Seifert surface for $P(3,3,3)$ and verify the proof for $P(5,5,5)$, $P(7,7,7)$ and $P(11,11,11)$. 

Finally, we would like to remark that the results in this paper still do not give a comprehensive description of totally geodesic surfaces in complements of all knots in $\mathcal{K}$. Using SnapPy \cite{SnapPy}, we found some evidence of a totally geodesic surface in the knot $9_{41}$. In particular, the cusp neighborhood of $9_{41}$ (see Figure \ref{fig:CuspNeighborhood_9_41}) shows six straight lines of horoballs with boundary slopes $1/0$ and one straight line of horoballs with boundary slopes $-6$ with respect to the pink fundamental domain of the cusp.   

\begin{figure}[h]
    \centering
    \includegraphics[scale=0.4]{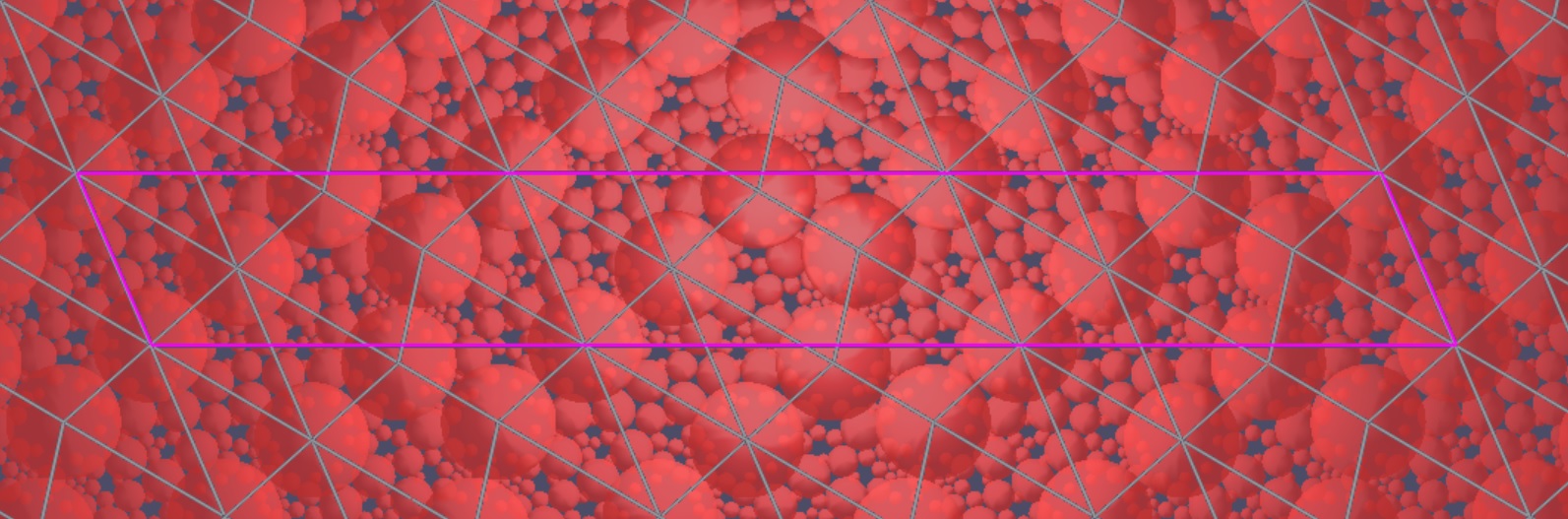}
    \caption{The cusp neighborhood of $9_{41}$}
    \label{fig:CuspNeighborhood_9_41}
\end{figure}

These straight lines of horoballs correspond to lifts of a cusped totally geodesic surface in the complement of $9_{41}$. We expect that these hyperplanes cover a single immersed totally geodesic surfaces with seven cusps, six of which have boundary slope $1/0$ and one of which has boundary slope $-6$. 
\begin{quest}
What is the topology of this surface? Is this a unique totally geodesic surface in the $9_{41}$ complement?
\end{quest}


\end{document}